\documentclass[11pt]{article}

\usepackage[T1]{fontenc}
\usepackage[utf8]{inputenc}
\usepackage{lmodern}
\usepackage[margin=1in]{geometry}
\usepackage{amsmath,amssymb,amsthm,mathtools,bm}
\usepackage{microtype}
\usepackage{xcolor}
\usepackage{graphicx}
\usepackage[pdfpagelabels,breaklinks]{hyperref}

\allowdisplaybreaks[2]
\numberwithin{equation}{section}
\hypersetup{
  colorlinks=true,
  linkcolor=blue!45!black,
  citecolor=green!35!black,
  urlcolor=blue!55!black,
  pdftitle={On the Nonoccurrence of Gradient Restart in Nesterov Accelerated Gradient Method},
  pdfauthor={Chenglong Bao, Liang Chen, Jiahong Li, Defeng Sun}
}

\newtheorem{theorem}{Theorem}[section]
\newtheorem{lemma}[theorem]{Lemma}

\newtheorem{corollary}[theorem]{Corollary}

\theoremstyle{remark}
\newtheorem{remark}[theorem]{Remark}

\newcommand{\R}{\mathbb R}
\newcommand{\Z}{\mathbb Z}
\newcommand{\ip}[2]{\left\langle #1,#2\right\rangle}
\newcommand{\norm}[1]{\left\lVert #1\right\rVert}
\newcommand{\dd}{\,\mathrm d}
\newcommand{\ee}{\mathrm e}
\newcommand{\fstar}{f^\star}

\title{\Large\bf On the Nonoccurrence of Gradient Restart in Nesterov Accelerated Gradient Method\footnote{All authors contributed equally to this work.
The research of this work was supported by the National Natural Science Foundation of China (12625119, W2612010, 12671361), the Hong Kong RGC Senior Research Fellow Scheme (SRFS2223-5S02), and GRF Project (15307523, 15305324, 15309625).}}
\author{ Chenglong Bao\thanks{Yau Mathematical Sciences Center, Tsinghua University, Beijing, China, and Yanqi Lake Beijing Institute of Mathematical Sciences and Applications, Beijing, China (\url{clbao@tsinghua.edu.cn}).}
\and
Liang Chen\thanks{School of Mathematics, Hunan University, Changsha, China (\url{chl@hnu.edu.cn}).}
\and
Jiahong Li\thanks{Department of Applied Mathematics, The Hong Kong Polytechnic University, Hung Hom, Hong Kong (\url{jiahong24.li@polyu.edu.hk}).}
\and
Defeng Sun\thanks{Department of Applied Mathematics, The Hong Kong Polytechnic University, Hung Hom, Hong Kong (\url{defeng.sun@polyu.edu.hk}).}
}
\date{\today}

\begin{document}
\maketitle

\begin{abstract}
Gradient restarting often improves the practical performance of accelerated gradient methods for unconstrained convex optimization problems, but a general theoretical guarantee of such improvement remains unclear.
In this paper, we construct smooth, strongly convex objectives with Lipschitz-continuous gradients and corresponding initial points for which gradient restart never occurs, both for Nesterov's accelerated gradient method and for its ordinary differential equation model.
From these examples, we conclude that strong convexity and smoothness alone do not guarantee that gradient restart improves the convergence behavior of the accelerated method or its continuous-time model.
In particular, for the continuous-time example, the objective gap is asymptotic to $1/(\pi t^3)$.
We therefore obtain a negative answer to the open problem raised by Su, Boyd, and Cand{\`e}s [\textit{J. Mach. Learn. Res.}, 2016, 17(153), 1--43] of whether gradient restart guarantees linear convergence of the continuous dynamics for smooth strongly convex objectives.
\end{abstract}

\medskip
{\small
\begin{center}
\parbox{0.95\hsize}{{\bf Keywords.}\; Nesterov's accelerated gradient method, gradient restarting, convex optimization, counterexample, R-linear convergence}
\end{center}
\begin{center}
\parbox{0.95\hsize}{{\bf MSC Classification.}\; 90C25, 65K05, 65B05, 90C06, 90C30}
\end{center}}

\section{Introduction}
We consider the unconstrained optimization problem
\begin{equation}
\min_{x\in\R^n} f(x),
\label{eq:optimization-problem}
\end{equation}
where $f:\R^n\to\R$ is convex and continuously differentiable with $L$-Lipschitz-continuous gradient for some $L>0$.
We assume that $f$ attains its minimum and write $f^\star:=\min_{x\in\R^n}f(x)$.
For solving \eqref{eq:optimization-problem}, we consider Nesterov's accelerated gradient method (NAG)~\cite{nesterov1983,nesterov2004} in the form, for $k=0,1,\ldots$,
\begin{equation}
x_{k+1}:=y_k-s\nabla f(y_k)
\qquad\mbox{and}\qquad
y_{k+1}:=x_{k+1}+\frac{k}{k+3}(x_{k+1}-x_k)
\label{eq:discrete-nag}
\end{equation}
where the initial point $x_0\in\R^n$ and $y_0:=x_0$ are given, and the step size $s\in(0,1/L]$ is fixed.
NAG \eqref{eq:discrete-nag} improves the worst-case objective rate of gradient descent from $O(k^{-1})$ to $O(k^{-2})$, which is optimal up to constants among first-order methods for smooth convex optimization \cite{nesterov1983,nesterov2004}.
Nevertheless, due to extrapolation, NAG often exhibits oscillatory behavior, which can negatively impact practical convergence performance.
To address this problem, various restart techniques have been proposed \cite{odonoghue2015,tao2016local,liang2017activity}.
Among them, a widely used strategy is gradient restart \cite{odonoghue2015}, which resets the momentum when
\begin{equation}
D_k:=\ip{\nabla f(y_k)}{x_{k+1}-x_k}>0,
\label{eq:discrete-gr}
\end{equation}
and uses $x_k$ as the new initial point.
For fixed $s$, $f$, and a nonoptimal initial point $x_0$, define the first gradient-restart index by
\begin{equation}
  K^{\rm gr}(x_0;f)
  :=\sup\left\{m\in\Z_{++}\mid
  D_k\le0,\ \forall k\in\Z\cap[0,m-1]\right\}.
  \label{gr ft}
\end{equation}
Evaluating $D_k$ requires only one additional inner product.
This inexpensive criterion has proved effective in applications including coherence retrieval, quantum tomography, and classification \cite{bao2018coherence,shang2017superfast,ito2017unified}.

These empirical improvements over the unrestarted NAG motivate a theoretical analysis of when gradient restart can improve convergence.
For smooth strongly convex objectives and $0<s<1/L$, Bao et al.~\cite{bao2026} established global $R$-linear convergence for NAG and for the gradient-restarted variant.
Bao et al. further established an improved convergence estimate of the gradient restart scheme when successive restart intervals are uniformly bounded, i.e., $K^{\rm gr}(x_i;f)\le \overline{\cal K}$ for some $\overline{\cal K}\ge 2$ and all $i\in \mathcal{I}$ with $\mathcal{I}$ denoting all restart and initial indices~\cite[Proposition~2]{bao2026}.
This theoretical improvement prompts further investigation into whether smoothness and strong convexity suffice to ensure this additional boundedness condition, or more fundamentally, whether they guarantee the necessary condition $K^{\rm gr}(x_0;f)<+\infty$.
Moreover, if $K^{\rm gr}(x_0;f)=+\infty$, the restart criterion is never activated during the iteration, which means the gradient restart scheme coincides with the unrestarted NAG, thereby ruling out any convergence improvement.
We therefore ask the following question:
\begin{quote}
\noindent
\bf
Q1: Do there exist an $L$-smooth, strongly convex function $f$, a step size $0<s<1/L$, and a nonoptimal initial point $x_0$ such that $K^{\rm gr}(x_0;f)=+\infty$?
\end{quote}

The effect of gradient restart on convergence can also be studied through the continuous-time model of NAG.
As shown in \cite{su2016}, under the scaling $t=k\sqrt{s}$, the iteration \eqref{eq:discrete-nag} admits the following continuous-time limit as $s\downarrow0$:
\begin{equation}
\ddot X(t)+\frac{3}{t}\dot X(t)+\nabla f(X(t))=0,
\qquad t>0,
\qquad
X(0)=x_0,
\qquad\mbox{and}\qquad
\dot X(0)=0.
\label{eq:nag-ode}
\end{equation}
For convex $f$ with an $L$-Lipschitz-continuous gradient, the NAG-ODE \eqref{eq:nag-ode} has a unique global solution $X\in C^2((0,+\infty);\R^n)\cap C^1([0,+\infty);\R^n)$ for all $x_0\in\R^n$ \cite[Theorem~1]{su2016}.
From \cite[Theorem~3]{su2016}, we obtain the accelerated objective convergence rate $f(X(t))-f^\star=O(t^{-2})$ for the NAG-ODE \eqref{eq:nag-ode}.
We can also use this model to study NAG's oscillatory behavior.
For strongly convex quadratic objectives and nonoptimal initial points, the objective gap satisfies the sharp bound $f(X(t))-f^\star=O(t^{-3})$ and does not decay exponentially \cite[Section~3.2]{su2016}, in contrast to the $R$-linear convergence of discrete NAG for $0<s<1/L$ \cite{bao2026}.
As in the discrete case, we consider gradient restarting to overcome this limitation by resetting the dynamics whenever
\begin{equation}
G(t):=
\ip{\nabla f(X(t))}{\dot X(t)}=
\frac{\mathrm d}{\mathrm dt}f(X(t))\ge0.
\label{eq:continuous-gr}
\end{equation}
Given a nonoptimal initial point $x_0$, the gradient restart time is then defined by
\begin{equation}
\label{grt}
T^{\rm gr}(x_0;f)
:=\sup\left\{t>0\mid G(u)<0\ \text{for all }u\in(0,t)\right\}.
\end{equation}
The gradient-restarted trajectory $X^{\mathrm{gr}}$ is obtained by solving \eqref{eq:nag-ode} anew at each finite restart time from the current position, with zero velocity and local time reset to zero, and concatenating the resulting solution segments in total elapsed time.
For convex quadratic objectives, Bao et al.~\cite{bao2026} proved that gradient restart improves the sublinear convergence of the unrestarted dynamics to linear convergence.
A key step in their analysis is to prove $T^{\rm gr}(x_0;f)<+\infty$ for every nonoptimal initial point $x_0$.
However, it remains unclear whether this finiteness property can be extended to general smooth strongly convex objectives, which motivates the basic question:
\begin{quote}
\noindent
\bf
Q2: Do there exist an $L$-smooth, strongly convex function $f$ and a nonoptimal initial point $x_0$ such that $T^{\rm gr}(x_0;f)=+\infty$?
\end{quote}
An affirmative answer to \textbf{Q2} would show that smoothness and strong convexity alone cannot guarantee the convergence improvement of gradient restarting, since the restarted and unrestarted trajectories would coincide.
If, in addition, the common restarted and unrestarted trajectory fails to converge linearly, we would obtain a negative answer to the open problem of whether gradient restart guarantees linear convergence of the continuous-time dynamics for $L$-smooth, strongly convex functions~\cite{su2016}.

\subsection*{Main results}
For $L>0$, let $\mathcal F_L(\R^n)$ be the class of $L$-smooth convex functions on $\R^n$:
\begin{equation*}
\begin{aligned}
\mathcal{F}_{L}(\R^n):=\big\{f:\R^n\to\R\mid{}&f\text{ is convex and
differentiable, and}\\
&\norm{\nabla f(x)-\nabla f(y)}\le L\norm{x-y}
\quad\forall x,y\in\R^n\big\}.
\end{aligned}
\end{equation*}
For $\mu>0$, let $\mathcal S_\mu(\R^n)$ be the class of $\mu$-strongly convex functions on $\R^n$:
\begin{equation*}
\mathcal{S}_\mu(\R^n):=\big\{f:\R^n\to(-\infty,+\infty]\mid
f(x)-\frac{\mu}{2}\norm{x}^2\text{ is convex}\big\}.
\end{equation*}
We write $\mathcal{S}_{\mu,L}(\R^n):=\mathcal{F}_L(\R^n)\cap \mathcal{S}_\mu(\R^n)$.
We answer \textbf{Q1} and \textbf{Q2} affirmatively in Theorems~\ref{thm:discrete-counterexample} and~\ref{thm:continuous-counterexample}, respectively.

\begin{theorem}
\label{thm:discrete-counterexample}
There exist constants $0<\mu\le L$, a function $f\in\mathcal{S}_{\mu,L}(\R^2)$, a step size $s\in(0,1/L)$, and an initial point $x_0=y_0\in\R^2$ such that the sequence generated by the NAG iteration \eqref{eq:discrete-nag} satisfies
\begin{equation}
  \ip{\nabla f(y_k)}{x_{k+1}-x_k}<0 \quad \mbox{for all} \quad k\ge0,
  \label{eq:discrete-main-nonrestart}
\end{equation}
and therefore $K^{\rm gr}(x_0;f)=+\infty$, where $K^{\rm gr}(x_0;f)$ is defined in \eqref{gr ft}.
\end{theorem}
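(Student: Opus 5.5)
The plan is to \emph{prescribe the iterates first and construct $f$ afterwards}, using the interpolation theorem for smooth strongly convex functions (Taylor--Hendrickx--Glineur). We do not fix $f$ and analyze NAG.

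One-dimensional examples are hopeless, and so are quadratic ones in any dimension. Near the minimizer, NAG on a quadratic has characteristic roots that become complex as $k/(k+3)\to1$. The iterates therefore oscillate in each eigendirection, and the dominant slow mode eventually makes $D_k>0$. The example must instead use genuinely non-quadratic, two-dimensional behavior. The natural candidate is a spiral: the iterates rotate around the minimizer while contracting, so that $x_{k+1}-x_k$ never points uphill. The gradient rotates as well, so no coordinate ever "overshoots".

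\textbf{Step 1 (prescribe the data).} Fix the minimizer $x^\star=0$ with $f^\star=0$, parameters $r\in(0,1)$ and an angle $\theta$, and set $x_k:=\rho_k(\cos k\theta,\sin k\theta)$, where $\rho_k$ is, for instance, $\rho_k=r^k$, possibly with a modified initial segment so that $y_0=x_0$ is consistent. Define $y_k$ by the NAG formula $y_{k}=x_{k}+\frac{k-1}{k+2}(x_{k}-x_{k-1})$. Then define the prescribed gradients through $x_{k+1}=y_k-s g_k$, that is, $g_k:=(y_k-x_{k+1})/s$.

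With these definitions, $D_k=-\frac1s\ip{x_{k+1}-y_k}{x_{k+1}-x_k}$ is an explicit trigonometric expression in $k$, $r$, $\theta$. We will choose $(r,\theta)$ so that it is negative for every $k$. For large $k$ this reduces to the sign of a limiting expression in $(r,\theta)$ with $\beta=1$; for small $k$ it is a finite check.

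\textbf{Step 2 (interpolation).} Next we need $f\in\mathcal S_{\mu,L}(\R^2)$ with $\nabla f(y_k)=g_k$ and $\nabla f(0)=0$. We must also choose function values $f_k:=f(y_k)$. By the interpolation theorem, such $f$ exists if and only if, for all pairs $i\ne j$ in the index set $\{\star,0,1,2,\dots\}$,
\begin{equation*}
f_i\ge f_j+\ip{g_j}{y_i-y_j}+\frac{1}{2(1-\mu/L)}\Big(\frac1L\norm{g_i-g_j}^2+\mu\norm{y_i-y_j}^2-\frac{2\mu}{L}\ip{g_j-g_i}{y_j-y_i}\Big).
\end{equation*}
We will take $f_k$ from a candidate model, for example $f_k:=\frac{c}{2}\norm{y_k}^2$ or the value obtained by integrating along the spiral. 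We will pick $\mu$ small and $L$ large (with $s<1/L$) so that these inequalities have slack. The infinite data set accumulating at $0$ is handled by a standard limiting argument, or by verifying the conditions on every finite subset and passing to the limit with uniform $C^{1,1}$ bounds.

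\textbf{Main obstacle.} The main difficulty is verifying the interpolation inequalities \emph{uniformly over all pairs $(i,j)$}. The momentum coefficient $k/(k+3)$ is not constant, so the data are not exactly self-similar under the map $x\mapsto r R_\theta x$.

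Our approach splits the pairs into two regimes. For $i,j\ge K$, the data are a small perturbation of an exactly self-similar configuration, namely the one obtained with momentum $1$. After rescaling by $r^{-\min(i,j)}$, that configuration's conditions depend only on $i-j$. They therefore reduce to a one-parameter family that decays geometrically in $|i-j|$, and strict inequality with margin absorbs the $O(1/k)$ perturbation. For pairs involving an index below $K$, we use a finite check. Rather than rely on a delicate computation there, we will perturb the initial segment (the choice of $\rho_0,\dots,\rho_K$) so that the early data are far from the constraint boundaries.

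Once the interpolant $f$ exists, NAG started at $x_0=y_0$ reproduces the prescribed sequence by construction. Step 1 then gives \eqref{eq:discrete-main-nonrestart}, and hence $K^{\rm gr}(x_0;f)=+\infty$.
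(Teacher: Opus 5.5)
Your overall strategy (prescribe the iterates, interpolate on finite subsets via Taylor--Hendrickx--Glineur, pass to a limit) is legitimate in principle. However, the decisive difficulty is not the one you single out, and your Step~1 data provably cannot be interpolated.

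Identify $\mathbb{R}^2$ with $\mathbb{C}$ and write $\sigma=r\mathrm{e}^{i\theta}$, $\Delta_k=x_k-x_{k-1}$, $\beta_k=\frac{k-1}{k+2}$. Since $s\nabla f(y_k)=\beta_k\Delta_k-\Delta_{k+1}$, for $\rho_k=r^k$ you get
\[
sD_k=\beta_k\langle\Delta_k,\Delta_{k+1}\rangle-\|\Delta_{k+1}\|^2=r^{2k}|\sigma-1|^2\Bigl(\frac{\beta_k\cos\theta}{r}-1\Bigr),
\]
so non-restart for all large $k$ forces $\cos\theta\le r$.

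On the other hand, take any $f\in\mathcal S_{\mu,L}(\mathbb{R}^2)$ with $\nabla f(0)=0$ and $sL<1$. Then $T=I-s\nabla f$ is the gradient of the convex, $(1-s\mu)$-smooth function $\frac12\|\cdot\|^2-sf$. By Baillon--Haddad, $\langle T(y),y\rangle\ge\|T(y)\|^2/(1-s\mu)$, so every $x_{k+1}=T(y_k)$ must lie in the disk with diameter $[0,(1-s\mu)y_k]$. At $k=0$, where $y_0=x_0$ and $T(x_0)=x_1$, this gives $r\le(1-s\mu)\cos\theta<\cos\theta$, a contradiction. For the pure spiral the same condition in fact fails at every index whose $\beta_k$ lies below a threshold depending on $(r,\theta,s\mu)$.

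Taking $\mu$ small and $L$ large cannot create slack either. Your $g_k=(y_k-x_{k+1})/s$ scale like $1/s>L$, so the interpolation inequalities depend only on $s\mu$ and $sL\in(0,1)$.

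So the modified initial segment is not a finite check to be made comfortable by perturbation; it is the core of the problem. Two requirements pull against each other there:
\begin{itemize}
\item While momentum is small, interpolability forces strong contraction: at $k=0,1$, where $\beta=0$, exactly $\rho_{k+1}\le(1-s\mu)\rho_k\cos\theta$.
\item Once $\beta_k$ is sizable, $D_k<0$ forbids an abrupt slowdown, i.e.\ it requires $\|\Delta_{k+1}\|^2>\beta_k\langle\Delta_k,\Delta_{k+1}\rangle$.
\end{itemize}
Nothing in your proposal shows that some choice of $\rho_0,\dots,\rho_K$ (with angles frozen at $k\theta$) meets both. Still less does it show that values $f_k$ exist making all early--early, early--tail and $\star$ inequalities hold.

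Your tail claim is likewise only asserted. For $r>\cos\theta$, the data map $y_k\mapsto x_{k+1}$ acts on the limiting spiral as multiplication by the non-real number $\sigma^2/(2\sigma-1)$. You would have to prove cyclic monotonicity, with margin, for such a rotating map.

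This obstruction is exactly the one the paper isolates: a zero-momentum start on a quadratic cannot excite the second-kind branch. The paper's remedy is the idea your plan lacks.
\begin{itemize}
\item Make $f$ exactly the isotropic quadratic on a half-space containing the whole tail. The tail is then an exact solution $Z_k=\rho^k\sqrt{\mu_{\rm d}}\,\Phi_{k+1}(\rho)$ of the linear NAG recurrence, including the second-kind Ferrers component, and $D_k<0$ is certified by the Durand--Nicholson integral.
\item Modify $f$ only on $\{v>c\}$, calibrated so that the two zero-momentum gradient steps land exactly on $Z_0$ and $Z_1$, and $y_2$ already lies in the quadratic region.
\end{itemize}
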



\begin{theorem}
\label{thm:continuous-counterexample}
There exist constants $0<\mu\le L$, a function $f\in\mathcal{S}_{\mu,L}(\R^2)$, and an initial point $x_0\in\R^2$ such that the unique solution of the NAG-ODE \eqref{eq:nag-ode} satisfies
\begin{equation}
  \ip{\nabla f(X(t))}{\dot X(t)}<0 \quad \mbox{for all} \quad t>0,
  \label{eq:continuous-main-nonrestart}
\end{equation}
that is, $T^{\rm gr}(x_0;f)=+\infty$, where $T^{\rm gr}(x_0;f)$ is defined in \eqref{grt}.
\end{theorem}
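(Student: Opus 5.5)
The plan is to build the example in reverse: prescribe the trajectory $X$ first, read off the gradient it forces along the curve, and only then construct a function $f\in\mathcal S_{\mu,L}(\R^2)$ with exactly those gradients. The starting point is an identity that turns the restart criterion into a condition on the trajectory alone. For any $C^2$ curve satisfying \eqref{eq:nag-ode}, we have $\nabla f(X)=-\ddot X-\tfrac3t\dot X$, so
\begin{equation*}
G(t)=\ip{\nabla f(X(t))}{\dot X(t)}=-\tfrac12\tfrac{\mathrm d}{\mathrm dt}\norm{\dot X}^2-\tfrac3t\norm{\dot X}^2
=-\frac{1}{2t^6}\frac{\mathrm d}{\mathrm dt}\bigl(t^6\norm{\dot X(t)}^2\bigr).
\end{equation*}
Hence \eqref{eq:continuous-main-nonrestart} holds if and only if $t\mapsto t^6\norm{\dot X(t)}^2$ is strictly increasing on $(0,\infty)$. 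Radial or separable objectives cannot work: there the motion reduces to damped one-dimensional oscillators whose velocities vanish periodically. The velocity must therefore never vanish, which points to a trajectory that spirals into the minimizer.

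Concretely, I would take $x^\star=0$ and a spiral
\begin{equation*}
X(t)=\rho(t)\,(\cos\theta(t),\sin\theta(t)),\qquad \rho(t)\sim c\,t^{-3/2},\qquad \dot\theta(t)\to\omega>0 .
\end{equation*}
Then $t^6\norm{\dot X}^2\approx t^6\rho^2\dot\theta^2\sim c^2\omega^2t^3$, which increases. This choice is also consistent with the $1/(\pi t^3)$ decay of the objective gap announced in the abstract. Near $t=0$ the profile has to match the forced expansion $X(t)=x_0-\tfrac{t^2}{8}\nabla f(x_0)+O(t^4)$, for which $G(t)\approx-\tfrac t4\norm{\nabla f(x_0)}^2<0$ automatically. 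So the plan is to choose $\rho$ and $\theta$ smooth in $t^2$ near $0$, interpolating monotonically between the two regimes, and to check monotonicity of $t^6\norm{\dot X}^2$ directly on the whole half-line. With $X$ fixed, define $g(t):=-\ddot X(t)-\tfrac3t\dot X(t)$ as the intended gradient along the curve, and define the intended values by $\varphi(t):=\int_t^\infty\ip{-g(u)}{\dot X(u)}\dd u$, normalized so that $f^\star=0$ at the origin. This makes $\frac{\mathrm d}{\mathrm dt}\varphi(X(t))=G(t)$ consistent by construction.

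The main obstacle is extending the data $\{(X(t),\varphi(t),g(t))\}_{t\ge0}\cup\{(0,0,0)\}$ to a function in $\mathcal S_{\mu,L}(\R^2)$. I would use the smooth strongly convex interpolation conditions: for all pairs $t\ne\tau$,
\begin{equation*}
\varphi(\tau)\ge\varphi(t)+\ip{g(t)}{X(\tau)-X(t)}+\tfrac{1}{2(1-\mu/L)}\Bigl(\tfrac1L\norm{g(t)-g(\tau)}^2+\mu\norm{X(t)-X(\tau)}^2-\tfrac{2\mu}{L}\ip{g(t)-g(\tau)}{X(t)-X(\tau)}\Bigr).
\end{equation*}
The extension is then produced by a Moreau-envelope type construction (an Azagra--Mudarra style $C^{1,1}$ convex extension, shifted by $\tfrac\mu2\norm{\cdot}^2$). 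To verify these inequalities, I would reduce them via the spiral structure. Writing $g$ in the polar frame gives $g\approx a(t)X+b(t)X^\perp$ with $a(t)\to\omega^2$-type constants and a small tangential part $b$. For large $t$ the data then look like gradients of a slightly rotated quadratic, which satisfies the inequalities with room to spare for suitable $\mu<\omega^2<L$. The near-diagonal regime $\tau\to t$ reduces to the infinitesimal condition $\mu\norm{\dot X}^2\le\ip{\dot g}{\dot X}$ together with $\norm{\dot g}\le L\norm{\dot X}$. The far-apart pairs and the pairs involving the origin are handled by the decay rates of $\rho$ and the explicit monotone profile.

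If this verification proves too delicate for a general profile, the fallback is to fix a specific closed form. For instance, take $\rho(t)$ built from $(1+t^2)^{-3/4}$ and $\theta(t)=\omega t$, so that every inequality becomes an explicit one-variable estimate that can be checked by calculus. Finally, \cite[Theorem~1]{su2016} gives uniqueness of the solution to \eqref{eq:nag-ode}, so the constructed $X$ is the trajectory, and the identity above gives $T^{\rm gr}(x_0;f)=+\infty$.
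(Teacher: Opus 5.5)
Your identity $G(t)=-\frac{1}{2t^6}\frac{\mathrm d}{\mathrm dt}\bigl(t^6\norm{\dot X(t)}^2\bigr)$ is correct. The architecture is also sound in principle: prescribe $X$, set $g:=-\ddot X-\frac3t\dot X$, then extend the $1$-jet by an Azagra--Mudarra $C^{1,1}$ convex extension after subtracting $\frac\mu2\norm{\cdot}^2$, a theorem that does apply to arbitrary subsets. The gap is that the whole difficulty of the theorem is moved into verifying the interpolation inequalities for all pairs $(t,\tau)$. That verification is never done, and your one explicit candidate fails. With $\theta(t)=\omega t$ and $\rho(0)>0$ you get $\dot X(0)=\rho(0)\omega\,(0,1)\neq0$, which violates the initial condition in \eqref{eq:nag-ode}. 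Moreover, $g(t)$ then blows up like $3\rho(0)\omega/t$, so no Lipschitz gradient can produce these data. Any repair with $\dot\theta(0)=0$ exposes the hard part. Since $\frac{\mathrm d}{\mathrm dt}\bigl(t^3\,X\times\dot X\bigr)=-t^3\,X\times g$ (scalar cross product), the quantity $t^3X\times\dot X=t^3\rho^2\dot\theta$ must rise from $0$ to $c^2\omega$. So the data on the transient must be genuinely non-radial, and your comparison with a nearby quadratic says nothing there. Two kinds of pairs are not controlled by ``decay rates'': those involving the transient, and the pair $(x_0,0)$, which constrains $\varphi(0)=\int_0^\infty(-G)\dd t$ against $g(0)$ and $x_0$. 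The conditions also form a two-parameter family in $(t,\tau)$, so a closed form does not reduce them to one-variable calculus. Finally, your near-diagonal condition is too weak. The correct infinitesimal requirement is $\ip{\dot g-\mu\dot X}{\dot g-L\dot X}\le0$, and it does not follow from $\mu\norm{\dot X}^2\le\ip{\dot g}{\dot X}$ together with $\norm{\dot g}\le L\norm{\dot X}$.

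The missing idea is the one you dismissed. Your claim that separable objectives cannot work is false, and the paper's $f_{\rm c}(x,y)=\frac12x^2+\int_0^y\gamma(q)\dd q$ is separable. Each coordinate's velocity vanishes periodically, but never both at once. For $t\ge T$ the trajectory is $(J_1/t,-Y_1/t)$ with $\norm{\dot X}^2=(J_2^2+Y_2^2)/t^2>0$. There $G<0$ follows from Nicholson's formula, which makes $J_1^2+Y_1^2$ nonincreasing. These two out-of-phase branches form exactly your spiral. Separability also removes any two-dimensional extension problem. On $[0,T]$ the second coordinate $p$ is strictly decreasing, so the forced gradient component is the one-variable function $\gamma=h\circ p^{-1}$. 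Membership in $\mathcal S_{\mu,L}(\R^2)$ then reduces to $0<\underline m\le\gamma'\le\overline m$. To keep your route, you would need an explicit trajectory with $\dot X(0)=0$ and an actual pairwise verification. Otherwise, adopt a separable design in which one coordinate is monotone during the transient.
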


The explicit constructions of the discrete and continuous counterexamples, together with the proofs of the two theorems above, are given in Sections~\ref{sec:discrete} and~\ref{sec:continuous}, respectively.
Since no restart occurs in either construction, the restarted and unrestarted methods generate the same iterates or trajectory.
Moreover, for the continuous-time construction in Theorem~\ref{thm:continuous-counterexample}, we further establish in Corollary~\ref{coro:continuous-counterexample} below that the objective gap is asymptotic to $1/(\pi t^3)$, which rules out linear convergence and thus answers the open problem in \cite[p.27]{su2016} negatively.
\begin{corollary}
\label{coro:continuous-counterexample}
There exist constants $0<\mu\le L$, a function $f\in\mathcal{S}_{\mu,L}(\R^2)$, and an initial point $x_0\in\R^2$ such that the gradient-restarted trajectory $X^{\mathrm{gr}}$ associated with the NAG-ODE \eqref{eq:nag-ode} and the restart criterion \eqref{eq:continuous-gr} satisfies
\begin{equation}
  f(X^{\mathrm{gr}}(t))-\fstar
  =\frac{1}{\pi t^3}+O(t^{-5}) \quad \mbox{as} \quad t\to+\infty.
  \label{eq:continuous-main-properties}
\end{equation}
Here $x^\star$ is the unique minimizer of $f$ with $f^\star=f(x^\star)$.
Thus, no constants $t_0,C,c_0>0$ satisfy $f(X^{\mathrm{gr}}(t))-\fstar\le C\ee^{-c_0t}$ for every $t\ge t_0$.
\end{corollary}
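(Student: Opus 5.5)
The plan is to take the function $f\in\mathcal S_{\mu,L}(\R^2)$ and initial point $x_0$ constructed in Section~\ref{sec:continuous} for Theorem~\ref{thm:continuous-counterexample}, and to reduce the corollary to an asymptotic statement about the unrestarted NAG-ODE trajectory. By Theorem~\ref{thm:continuous-counterexample}, $G(t)<0$ for all $t>0$, so $T^{\rm gr}(x_0;f)=+\infty$. By the definition of the restarted trajectory (solve \eqref{eq:nag-ode} anew only at finite restart times), no restart is ever triggered. Hence $X^{\mathrm{gr}}\equiv X$ on $[0,+\infty)$, and it suffices to prove $f(X(t))-\fstar=\frac{1}{\pi t^3}+O(t^{-5})$ for the unique solution $X$ of \eqref{eq:nag-ode}.

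For the asymptotics I would use the explicit form of the trajectory from the construction. The natural mechanism for $G<0$ in $\R^2$ is a rotating, non-oscillating trajectory whose two components behave like $\frac{c}{t}J_1(t)$ and $\frac{c}{t}Y_1(t)$, the two Bessel solutions of $\ddot u+\frac3t\dot u+u=0$. With such a trajectory $\|X(t)-x^\star\|^2$ is proportional to $t^{-2}(J_1^2+Y_1^2)(t)$, which is monotone rather than oscillating. Concretely, I expect that for $t\ge t_1$ the trajectory stays in a region where $f(x)=\fstar+\frac12\|x-x^\star\|^2$ (after normalization), and that the construction fixes the amplitude $c$. On $[t_1,\infty)$ I would write $X(t)-x^\star=\frac{1}{t}\bigl(aJ_1(t)+bY_1(t)\bigr)$ with vectors $a,b$ determined by matching position and velocity at $t_1$. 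The construction should make $a\perp b$ with $\|a\|=\|b\|=c$, since that is what prevents oscillation. This gives
\[
f(X(t))-\fstar=\frac{c^2}{2t^2}\bigl(J_1(t)^2+Y_1(t)^2\bigr).
\]

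Next I would invoke the classical Hankel/Nicholson expansion of the squared modulus of the Hankel function,
\[
J_1(t)^2+Y_1(t)^2=\frac{2}{\pi t}\Bigl(1+\frac{3}{8t^2}+O(t^{-4})\Bigr).
\]
This yields $f(X(t))-\fstar=\frac{c^2}{\pi t^3}+O(t^{-5})$, and with the normalization $c=1$ fixed by the construction we obtain \eqref{eq:continuous-main-properties}. The final claim is then immediate. If $f(X^{\mathrm{gr}}(t))-\fstar\le C\ee^{-c_0t}$ for all $t\ge t_0$, multiplying by $t^3$ would give $t^3\bigl(f(X^{\mathrm{gr}}(t))-\fstar\bigr)\to0$, contradicting the limit $1/\pi$.

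The main obstacle is the second step: justifying that the trajectory eventually lies in the region where $f$ is exactly quadratic, and identifying the coefficients $a,b$ precisely, in particular their orthogonality and equal norms. If the construction instead prescribes the trajectory first and defines $f$ around it, I would read off $\|X(t)-x^\star\|$ directly from that prescription. Otherwise I would match the initial data at the time the trajectory enters the quadratic region and use the Wronskian identity $J_1Y_1'-J_1'Y_1=\frac{2}{\pi t}$ to compute $a$ and $b$ explicitly. The Bessel asymptotics themselves are standard.
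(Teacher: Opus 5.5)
Your proposal matches the paper's proof: Theorem~\ref{thm:continuous-counterexample} gives $X^{\mathrm{gr}}=X_{\rm c}$, and the tail bound \eqref{eq:tail-height-bound} keeps the trajectory in the half-space where $f_{\rm c}=\frac12(x^2+y^2)$. The obstacle you flag is already settled by the construction, which prescribes $X_{\rm c}=(J_1/t,-Y_1/t)$ on $[T,+\infty)$, so your $c=1$ with no Wronskian matching needed, and the gap is $(J_1^2+Y_1^2)/(2t^2)$; the paper gets $J_1^2+Y_1^2=2/(\pi t)+O(t^{-3})$ by squaring the expansions of Lemma~\ref{lem:bessel-asymptotics}, where the cross terms cancel, rather than citing the modulus expansion directly.
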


\subsection*{Related work}
Wen et al.~\cite{wen2017linear} established $R$-linear convergence of inertial forward--backward splitting under an error-bound condition, but their requirement $\sup_k\beta_k<1$ excludes the momentum parameters of NAG \eqref{eq:discrete-nag}.
For accelerated methods whose momentum parameters approach one, Tao et al.~\cite{tao2016local} and Liang et al.~\cite{liang2017activity} used spectral analysis to establish local linear convergence and explain oscillatory behavior near a minimizer.
To obtain a nonasymptotic convergence bound, Li et al.~\cite{li2024linear} obtained an $R$-linear rate after a finite iteration threshold depending on $L,\mu$, and $s$, and extended the analysis to the fast iterative shrinkage-thresholding algorithm for composite strongly convex problems.
Nevertheless, these linear convergence results of accelerated methods do not improve on those of gradient or proximal gradient methods and can even be worse near a minimizer, consistent with the oscillatory behavior of accelerated iterates \cite{bao2026,tao2016local,liang2017activity}.

Restarting is often used to improve the convergence performance of iterative methods, as related strategies have been widely studied in conjugate gradient \cite{powell1977}, the generalized minimal residual method \cite{saad1986}, Halpern-type methods \cite{sun2025accelerating,chen2026hprlp}, and Anderson acceleration \cite{zhang2020,wei2022shortterm,wei2022minimal}.
For NAG \eqref{eq:discrete-nag}, a suitably chosen fixed restart period allows one to attain an accelerated linear convergence rate for smooth strongly convex objectives \cite{becker2011templates,Nesterov2013GradientMF,odonoghue2015}.
Varying restart periods and parameter searches provide alternatives \cite{roulet2020sharpness}, while Renegar and Grimmer \cite{renegar2022restart} trigger restarts through prescribed decreases in the objective value.
For schedules based on problem parameters, however, inaccurate estimates or changes in local conditioning can limit their practical benefit \cite{odonoghue2015}.

This dependence motivates adaptive restart rules that use information from the current iterates or trajectory to determine when to restart.
Besides gradient restart, common adaptive rules include function-value restart, triggered by an increase in the objective value \cite{odonoghue2015}, and speed restart, triggered when the speed ceases to increase \cite{su2016}.
Compared to gradient restart, the former requires additional function-value evaluations, while the latter is more conservative and may only partially mitigate the adverse effects of oscillation.
Su, Boyd and Cand{\`e}s \cite{su2016} proved uniform bounds on speed-restart intervals and thus established global linear convergence under strong convexity.
For inertial systems with Hessian-driven damping \cite{attouch2016hessian,attouch2022hessian}, Maul\'en and Peypouquet \cite{maulen2023speed} established linear convergence of a speed-restart scheme under quadratic growth.
Maul\'en, Guo, and Peypouquet \cite{maulen2026interpolate} further established linear convergence of restart schemes interpolating between speed and function-value restart under strong convexity.
However, along any trajectory of the NAG-ODE \eqref{eq:nag-ode}, the speed-restart condition is always reached before the gradient-restart condition \eqref{eq:continuous-gr}, so upper bounds on speed-restart intervals cannot guarantee that gradient restart is ever triggered.

In several numerical examples, gradient-restarted NAG reaches small objective gaps in fewer iterations than both unrestarted NAG and speed-restarted NAG \cite[Figure~6]{su2016}.
However, the available theoretical guarantees of improvement for gradient restart remain limited.
Specifically, Moursi et al.\ \cite{moursi2023gradient} proved an improved $O(k^{-2})$ function-value bound for a variant restarting from $x_{k+1}$ on one-dimensional smooth convex objectives.
Under strong convexity, Bao et al.\ \cite{bao2026} established global $R$-linear convergence in discrete time, yet their guarantees of improved convergence rates still require uniformly bounded restart intervals in discrete time or quadratic objectives in continuous time.
Thus, all existing improvement guarantees for gradient restart either restrict the problem class or assume bounded restart intervals, rather than following from smoothness and strong convexity alone. 

\subsection*{Organization}
The remainder of the paper is organized as follows.
Section~\ref{sec:preliminaries} reviews preliminary results on the two classes of special functions needed for our constructions.
Sections~\ref{sec:discrete} and~\ref{sec:continuous} construct the counterexamples for discrete NAG and the continuous-time NAG-ODE, and verify the required properties, respectively.
Section~\ref{sec:conclusion} discusses the common principles behind the constructions, their extensions, and further questions.

\section{Preliminaries}
\label{sec:preliminaries}
This section collects several identities and asymptotic estimates for the special functions used in our counterexamples.
Because both constructions build upon solutions for isotropic quadratic objectives, the underlying discrete NAG recurrences and continuous NAG-ODE trajectories can be expressed in terms of Ferrers and Bessel functions, respectively.
We review the relevant properties of Legendre and Ferrers functions~\cite{erdelyi1953,durand1978} in Subsection~\ref{subsec:ferrers-preliminaries}, and those of Bessel functions~\cite{watson1944} in Subsection~\ref{subsec:bessel-preliminaries}.

\subsection{Legendre and Ferrers functions}
\label{subsec:ferrers-preliminaries}

To analyze the discrete NAG recurrence for the quadratic objective $z \mapsto \mu \Vert{}z\Vert{}^2/2$ with $\mu > 0$ and $0 < s < 1/\mu$, we make use of the associated Ferrers functions of order $-1$.
For each integer $m \ge 0$, the Legendre polynomial $P_m$ on $\mathbb{R}$ is given by Rodrigues' formula~\cite{erdelyi1953}, namely
\begin{equation}
  P_m(x)
  :=
  \frac{1}{2^m m!}
  \frac{\mathrm d^m}{\mathrm dx^m}(x^2-1)^m.
  \label{eq:legendre-rodrigues}
\end{equation}
For $x \in (-1, 1)$, the Ferrers function of the second kind~\cite{erdelyi1953} is defined by
\begin{equation}
  Q_m(x):=\frac12P_m(x)\log\frac{1+x}{1-x}
    -\sum_{j=1}^m\frac1jP_{j-1}(x)P_{m-j}(x)
  \label{eq:second-kind-Q-definition}
\end{equation}
for integers $m \ge 1$.
Following the normalization in~\cite{erdelyi1953}, we define the associated Ferrers functions of order $-1$ for integers $m \ge 1$ and $x \in (-1, 1)$ via the derivative relations
\begin{equation}
  P_m^{(-1)}(x):=\frac{\sqrt{1-x^2}}{m(m+1)}P_m'(x),
  \qquad
  Q_m^{(-1)}(x):=\frac{\sqrt{1-x^2}}{m(m+1)}Q_m'(x).
  \label{eq:negative-order-identities}
\end{equation}
These functions satisfy the following three-term recurrence and differentiation identities.

\begin{lemma}[\cite{erdelyi1953}~Section~3.8, Eqs.~(12) and~(19)]
\label{lem:ferrers-identities}
Let $\{W_m\}_{m\ge1}$ denote either $\{P_m^{(-1)}\}_{m\ge1}$ or $\{Q_m^{(-1)}\}_{m\ge1}$.
For every integer $m\ge2$ and $x\in(-1,1)$, we have
\begin{align}
  (m+2)W_{m+1}(x)
  &=(2m+1)xW_m(x)-(m-1)W_{m-1}(x)
  \label{eq:ferrers-degree-recurrence}
  \\
  \mbox{and}\quad (1-x^2)W_m'(x)
  &=(m-1)W_{m-1}(x)-mxW_m(x).
  \label{eq:ferrers-differentiation}
\end{align}
\end{lemma}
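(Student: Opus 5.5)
Both identities are linear and homogeneous in the family $\{W_m\}$, so I would reduce them to classical facts about degree-$m$ Legendre functions $w_m\in\{P_m,Q_m\}$. The facts I need are the Legendre equation
\begin{equation*}
\bigl((1-x^2)w_m'\bigr)'+m(m+1)w_m=0,
\end{equation*}
Bonnet's recurrence
\begin{equation*}
(m+1)w_{m+1}=(2m+1)xw_m-mw_{m-1},
\end{equation*}
and the derivative relation
\begin{equation*}
(1-x^2)w_m'=(m+1)\bigl(xw_m-w_{m+1}\bigr).
\end{equation*}
For $P_m$ these follow directly from Rodrigues' formula \eqref{eq:legendre-rodrigues}. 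For $Q_m$ as defined in \eqref{eq:second-kind-Q-definition}, I would split $Q_m=\tfrac12P_m\log\frac{1+x}{1-x}-R_m$ with $R_m$ the polynomial sum. The logarithmic part then obeys each relation because $P_m$ does, up to extra terms coming from $\frac{\mathrm d}{\mathrm dx}\log\frac{1+x}{1-x}=\frac{2}{1-x^2}$. What remains is a set of polynomial identities for $R_m$, which I would check by induction on $m$ using the product structure $\sum_j\frac1jP_{j-1}P_{m-j}$ and Bonnet for $P$. I expect this verification for $Q_m$ to be the main technical obstacle. If it becomes messy, I would instead cite the standard fact that this $Q_m$ is the usual Ferrers function of the second kind, for which all three relations are classical.

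Next I set $V_m:=(1-x^2)w_m'/(m(m+1))$, so that $W_m=V_m/\sqrt{1-x^2}$. The derivative relation gives
\begin{equation*}
V_m=\frac{xw_m-w_{m+1}}{m},
\end{equation*}
which uses only indices $\ge1$ and so avoids needing $Q_0$. For \eqref{eq:ferrers-degree-recurrence}, the factor $(1-x^2)^{-1/2}$ cancels, and it suffices to show
\begin{equation*}
(m+2)V_{m+1}=(2m+1)xV_m-(m-1)V_{m-1}.
\end{equation*}
I would substitute the expression for $V_m$ above, eliminate $w_{m+2}$ and $w_{m+1}$ with Bonnet's recurrence, and compare coefficients of $w_m$ and $w_{m-1}$ (with polynomial-in-$x$ coefficients). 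This is a finite identity check.

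For \eqref{eq:ferrers-differentiation}, differentiating $W_m=V_m(1-x^2)^{-1/2}$ gives
\begin{equation*}
(1-x^2)W_m'=(1-x^2)^{-1/2}\bigl[(1-x^2)V_m'+xV_m\bigr].
\end{equation*}
The Legendre equation gives $V_m'=-w_m$, so the claim is equivalent to
\begin{equation*}
-(1-x^2)w_m=(m-1)V_{m-1}-(m+1)xV_m.
\end{equation*}
I would again insert the formula for $V$ and reduce with Bonnet. This leaves a relation among $w_{m-1},w_m,w_{m+1}$ and $x$. It should follow from the derivative relation written in the two equivalent forms
\begin{equation*}
m(w_{m-1}-xw_m)=(1-x^2)w_m'=(m+1)(xw_m-w_{m+1}),
\end{equation*}
whose consistency is exactly Bonnet's recurrence.
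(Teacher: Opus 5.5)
Your proposal is correct, and it takes a different route from the paper. The paper gives no derivation for this lemma. It quotes \cite{erdelyi1953}, i.e., the general recurrences for associated Legendre functions of arbitrary degree and order, specialized to order $-1$ and integer degree, and asserts that the normalization in \eqref{eq:negative-order-identities} agrees with Erdélyi's.

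You instead reduce everything to three order-zero facts, and the algebra closes as you predict. With $V_k=(xw_k-w_{k+1})/k$ and Bonnet's recurrence at indices $m$ and $m+1$, one gets $(m+2)V_{m+1}=w_m-xw_{m+1}$. Both sides of the $V$-recurrence then reduce to $w_m-\frac{2m+1}{m+1}x^2w_m+\frac{m}{m+1}xw_{m-1}$. For the second identity, Bonnet gives $(m+1)xV_m=x(w_{m-1}-xw_m)$, so $(m-1)V_{m-1}-(m+1)xV_m=-(1-x^2)w_m$ follows at once.

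What your route buys is transparency. It shows that the order $-1$ identities are formal consequences of the Legendre equation, Bonnet's recurrence and the derivative relation. It only uses indices $\ge1$, so it never needs $Q_0$; this matches the paper defining $Q_m$ only for $m\ge1$. It also works directly from the explicit definitions, so no comparison with Erdélyi's normalization of $Q_m^{(-1)}$ is required.

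The cost is verifying the order-zero relations for the explicit $Q_m$ in \eqref{eq:second-kind-Q-definition}. The induction on the polynomial part $R_m$ is genuinely the messiest step. For example, the Legendre equation for $Q_m$ amounts to $\bigl((1-x^2)R_m'\bigr)'+m(m+1)R_m=2P_m'$. Your fallback is to identify \eqref{eq:second-kind-Q-definition} with Christoffel's classical representation of the Ferrers function of the second kind. That is no weaker than the paper's own reliance on citation.
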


We next record the large-degree asymptotics at a fixed angle $\theta$.
\begin{lemma}[\cite{erdelyi1953}~Section~3.9.1, Eqs.~(1) and (2)]
\label{lem:ferrers-asymptotics}
Fix $\theta\in(0,\pi)$ and set $\omega_m:=(m+\tfrac12)\theta-\tfrac{3\pi}{4}$.
Then, as $m\to+\infty$, we have
\begin{align}
  P_m^{(-1)}(\cos\theta)
  &=\sqrt{\frac{2}{\pi\sin\theta}}\,m^{-3/2}
    \bigl(\cos\omega_m+O(m^{-1})\bigr)
  \label{eq:ferrers-P-large}
  \\
  \mbox{and}\quad \frac2\pi  Q_{m}^{(-1)}(\cos\theta)
  &=-\sqrt{\frac{2}{\pi\sin\theta}}\,m^{-3/2}
    \bigl(\sin\omega_m+O(m^{-1})\bigr).
  \label{eq:ferrers-Q-large}
\end{align}
\end{lemma}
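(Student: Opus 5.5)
The statement is quoted from Erdélyi, so one option is simply to specialize the general Hobson/Darboux asymptotic
$P_\nu^{\mu}(\cos\theta)\sim\frac{\Gamma(\nu+\mu+1)}{\Gamma(\nu+3/2)}\sqrt{\tfrac{2}{\pi\sin\theta}}\cos\bigl((\nu+\tfrac12)\theta-\tfrac\pi4+\tfrac{\mu\pi}{2}\bigr)$ to $\mu=-1$. This gives the prefactor $\Gamma(m)/\Gamma(m+\tfrac32)=m^{-3/2}(1+O(m^{-1}))$ and the phase $(m+\tfrac12)\theta-\tfrac{3\pi}{4}=\omega_m$, with the analogous formula for $Q$. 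For a self-contained route I would instead reduce to order $0$ through \eqref{eq:negative-order-identities}. The plan below avoids differentiating any asymptotic expansion.

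\emph{Step 1 (order-zero asymptotics).} I take as the basic input the classical Laplace--Heine asymptotics with a uniform $O(m^{-1})$ remainder at fixed $\theta\in(0,\pi)$:
\begin{align*}
P_m(\cos\theta)&=\sqrt{\tfrac{2}{\pi m\sin\theta}}\bigl(\cos(\phi_m)+O(m^{-1})\bigr),\\
\tfrac{2}{\pi}Q_m(\cos\theta)&=-\sqrt{\tfrac{2}{\pi m\sin\theta}}\bigl(\sin(\phi_m)+O(m^{-1})\bigr),
\end{align*}
where $\phi_m:=(m+\tfrac12)\theta-\tfrac\pi4$. These follow from Darboux's method applied to the generating function $(1-2xh+h^2)^{-1/2}$, whose singularities are at $h=\ee^{\pm i\theta}$. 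Equivalently, they follow from the Laplace integral $P_m(\cos\theta)=\frac1\pi\int_0^\pi(\cos\theta+i\sin\theta\cos\varphi)^m\dd\varphi$ by stationary phase. For $Q_m$ one uses the companion integral or the Christoffel-type representation in \eqref{eq:second-kind-Q-definition}.

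\emph{Step 2 (removing the derivative).} Both $P_m$ and $Q_m$ satisfy $(1-x^2)W_m'(x)=m\bigl(W_{m-1}(x)-xW_m(x)\bigr)$. This is the standard Legendre identity, valid for $Q_m$ since it solves the same recurrences. Hence, with $x=\cos\theta$,
\[
P_m^{(-1)}(\cos\theta)=\frac{P_{m-1}(\cos\theta)-\cos\theta\,P_m(\cos\theta)}{(m+1)\sin\theta},
\]
and similarly for $Q_m^{(-1)}$. Substitute Step 1 for indices $m-1$ and $m$. The replacement $(m-1)^{-1/2}=m^{-1/2}(1+O(m^{-1}))$ costs only $O(m^{-3/2})$ absolutely. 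Then use the trigonometric identities
\[
\cos(\phi_m-\theta)-\cos\theta\cos\phi_m=\sin\theta\sin\phi_m,
\qquad
\sin(\phi_m-\theta)-\cos\theta\sin\phi_m=-\sin\theta\cos\phi_m.
\]
These turn the numerator into $\sqrt{2/(\pi m\sin\theta)}\,\sin\theta\,(\sin\phi_m+O(m^{-1}))$. Dividing by $(m+1)\sin\theta$ gives $m^{-3/2}\sqrt{2/(\pi\sin\theta)}(\sin\phi_m+O(m^{-1}))$. Finally $\sin\phi_m=\cos(\phi_m-\tfrac\pi2)=\cos\omega_m$, which is \eqref{eq:ferrers-P-large}. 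The $Q$ case gives $-\cos\phi_m=-\sin\omega_m$, which is \eqref{eq:ferrers-Q-large}.

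\emph{Main obstacle.} The real content lies in Step 1: the $O(m^{-1})$ remainder must be uniform, in the additive form stated, so that it does not degrade near zeros of the cosine. That form is also what makes the subtraction in Step 2 harmless. The leading terms do not cancel; they combine into a single sinusoid of the same order, and the errors scale by $1/(m+1)$. I would establish Step 1 by Darboux's method, expanding $(1-h\ee^{i\theta})^{-1/2}(1-h\ee^{-i\theta})^{-1/2}$ near each singularity to one further order. The second-order terms produce the $O(m^{-3/2})$ absolute correction, which is exactly the claimed relative $O(m^{-1})$.
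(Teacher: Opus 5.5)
Your first paragraph is precisely the paper's proof. The paper specializes Erdélyi's fixed-angle (Hobson) expansion to degree $m$ and order $-1$, then uses $\Gamma(m)/\Gamma(m+\tfrac32)=m^{-3/2}(1+O(m^{-1}))$.

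Your self-contained route is a genuinely different and correct argument. Instead of the general-order expansion, you use the exact order-zero identity $(1-x^2)W_m'=m(W_{m-1}-xW_m)$. This rewrites the paper's definition \eqref{eq:negative-order-identities} as $P_m^{(-1)}(\cos\theta)=(P_{m-1}(\cos\theta)-\cos\theta\,P_m(\cos\theta))/((m+1)\sin\theta)$, and similarly for $Q_m^{(-1)}$, so only the Laplace--Heine asymptotics are needed. The two leading terms combine into $\sin\theta$ times a single sinusoid rather than cancelling. Hence the additive $O(m^{-1})$ remainders stay lower order after division by $m+1$, and your bookkeeping there is right. Your route buys independence from the normalization and phase conventions for negative-order Ferrers functions, which differ between sources by signs and gamma-ratio factors. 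It also explains the extra $-\pi/2$ in the phase as a trigonometric identity between adjacent degrees. The paper's citation buys brevity.

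There is a sign slip in your last line. Since $\omega_m=\phi_m-\tfrac\pi2$, one has $\sin\omega_m=-\cos\phi_m$, so $-\cos\phi_m=-\sin\omega_m$ is false. Carrying the minus sign from the $Q$ asymptotics, the numerator is $-\sqrt{2/(\pi m\sin\theta)}\,(-\sin\theta\cos\phi_m+O(m^{-1}))$. The bracket is therefore $\cos\phi_m=-\sin\omega_m$, which does give \eqref{eq:ferrers-Q-large}.

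Also, Step~1 for $Q_m$ remains a cited input, not something you prove. The Christoffel sum in \eqref{eq:second-kind-Q-definition} is not a practical way to obtain its asymptotics, since it would require asymptotics of $\sum_j j^{-1}P_{j-1}P_{m-j}$. Use the generating function of $Q_n$ or Heine's integral instead, or simply cite the $\mu=0$ case of the classical formula.
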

\begin{proof}
Specializing the fixed-angle expansions in~\cite[Section~3.9.1, Eqs.~(1) and~(2)]{erdelyi1953} to degree $m$ and order $-1$, and applying the gamma-function ratio estimate $\Gamma(m)/\Gamma(m + 3/2) = m^{-3/2}(1 + O(m^{-1}))$ from~\cite[Section~1.18, Eq.~(4)]{erdelyi1953}, we obtain~\eqref{eq:ferrers-P-large} and~\eqref{eq:ferrers-Q-large}.
\end{proof}

To determine the sign of the discrete restart quantity, we need the following Durand-type formula, which also involves the Legendre function of the second kind on $(1,+\infty)$.
To distinguish it from $Q_m$ on $(-1,1)$, we denote this function by $\widehat Q_m$ and define it, for integers $m\ge0$ and $w>1$, by~\cite{erdelyi1953}
\begin{equation}
  \widehat Q_m(w):=\frac12\int_{-1}^1\frac{P_m(v)}{w-v}\dd v.
  \label{eq:legendre-Q-integral}
\end{equation}

\begin{lemma}[\cite{durand1978}~Section~3.3, Eq.~(3.11)]
\label{lem:ferrers-nicholson}
For every integer $m\ge2$ and $x\in(0,1)$, we have
\begin{equation}
  P_m'(x)^2+\frac4{\pi^2}Q_m'(x)^2
  =-\frac{4m(m+1)}{\pi^2(1-x^2)^2}
    \int_1^{+\infty} \widehat Q_m'(w)
    \sqrt{(w-1)(w+1-2x^2)}\dd w.
  \label{eq:ferrers-derivative-nicholson}
\end{equation}
Moreover, $\widehat{Q}_m'(w)<0$ for all $w>1$ while the integral in~\eqref{eq:ferrers-derivative-nicholson} converges absolutely.
\end{lemma}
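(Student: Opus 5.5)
The plan is to work with the analytic function $\widehat Q_m$, whose boundary values on $(-1,1)$ combine the two Ferrers functions, so that the left-hand side of \eqref{eq:ferrers-derivative-nicholson} becomes the squared modulus of a single analytic function. That modulus is then evaluated through an integral representation of Nicholson type.

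\textbf{Step 1 (boundary values).} I would first extend $\widehat Q_m$ from \eqref{eq:legendre-Q-integral} to $z\in\mathbb C\setminus[-1,1]$. Applying the Sokhotski--Plemelj formula to the Cauchy integral gives
\[
\widehat Q_m(x\pm i0)=Q_m(x)\mp\tfrac{i\pi}{2}P_m(x)\qquad\text{for } x\in(-1,1),
\]
which is consistent with \eqref{eq:second-kind-Q-definition}. Differentiating in $x$, which is legitimate since both sides are real-analytic on $(-1,1)$, yields
\[
\frac{4}{\pi^2}\bigl|\widehat Q_m'(x+i0)\bigr|^2=P_m'(x)^2+\frac4{\pi^2}Q_m'(x)^2 .
\]
So it suffices to compute $\widehat Q_m'(z)\,\widehat Q_m'(\bar z)$ and then let $z\to x$.

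\textbf{Step 2 (the product as a single integral).} For the product I would use the Heine/Laplace representation
\[
\widehat Q_m(z)=\int_0^\infty\bigl(z+\sqrt{z^2-1}\cosh t\bigr)^{-m-1}\dd t .
\]
Alternatively, one can follow Durand's route: both $\widehat Q_m'(z)\widehat Q_m'(\bar z)$ and the proposed integral satisfy the same fourth-order ODE in $x$ obtained from \eqref{eq:ferrers-differentiation} (equivalently, from the Legendre equation for order-one functions). One then matches the asymptotics as $x\to1^-$ and the behaviour at $x=0$.

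Concretely, the route I would try first is this:
\begin{itemize}
\item Multiply the two Heine integrals.
\item Change variables to the product-formula variable $w$. Using the addition theorem for Legendre functions, $w$ plays the role of $xx'+\sqrt{x^2-1}\sqrt{x'^2-1}\cosh\phi$ with $x'=\bar x$.
\item Integrate out the remaining angular variable. This should produce the kernel $\sqrt{(w-1)(w+1-2x^2)}$, and after differentiating twice, the prefactor $(1-x^2)^{-2}$ and the factor $m(m+1)$ coming from the order-one normalization \eqref{eq:negative-order-identities}.
\end{itemize}
I expect this step to be the main obstacle. Getting the kernel and constants exactly right requires care with branches of $\sqrt{z^2-1}$ on the cut. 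If the direct computation becomes unwieldy, I would verify the formula instead by checking that the right-hand side satisfies the product ODE and has the correct leading behaviour as $x\to1^-$, where $P_m'(1)=m(m+1)/2$ and $Q_m'$ blows up like $(1-x^2)^{-1}$.

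\textbf{Step 3 (sign and convergence).} The remaining claims are elementary.
\begin{itemize}
\item \emph{Sign.} Differentiating the Heine representation for real $w>1$ gives
\[
\widehat Q_m'(w)=-(m+1)\int_0^\infty\bigl(w+\sqrt{w^2-1}\cosh t\bigr)^{-m-2}\Bigl(1+\tfrac{w\cosh t}{\sqrt{w^2-1}}\Bigr)\dd t<0 .
\]
\item \emph{Convergence at infinity.} As $w\to\infty$, \eqref{eq:legendre-Q-integral} gives $\widehat Q_m(w)\sim c_m w^{-m-1}$, hence $\widehat Q_m'(w)=O(w^{-m-2})$. Against the kernel, which is $O(w)$, the integrand is $O(w^{-m-1})$, which is integrable for $m\ge2$.
\item \emph{Convergence near $w=1$.} Here $\widehat Q_m(w)\sim-\tfrac12\log(w-1)$, so $\widehat Q_m'(w)=O((w-1)^{-1})$. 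Multiplied by $\sqrt{w-1}$, this gives an integrable $(w-1)^{-1/2}$ singularity.
\end{itemize}
Since the integrand has constant sign, the integral converges absolutely, and the right-hand side of \eqref{eq:ferrers-derivative-nicholson} is positive, as it must be.
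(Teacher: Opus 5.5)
\textbf{Verdict: there is a genuine gap in Step 2, which carries the whole identity.}

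Your Step 1 is correct. The Sokhotski--Plemelj boundary values give $P_m'(x)^2+\frac4{\pi^2}Q_m'(x)^2=\frac4{\pi^2}\bigl|\widehat Q_m'(x+i0)\bigr|^2$. With $\sqrt{z^2-1}=\pm i\sqrt{1-x^2}$ on the two sides of the cut, your product-formula variable becomes $x^2+(1-x^2)\cosh\phi$, which is exactly the right one.

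Step 3 is also fine. The Heine-integral sign argument is a valid alternative to the paper's Rodrigues integration by parts, $\widehat Q_m'(w)=-\frac{m+1}{2^{m+1}}\int_{-1}^1(1-v^2)^m(w-v)^{-m-2}\dd v<0$. Passing from $\widehat Q_m\sim c_mw^{-m-1}$ and $\widehat Q_m\sim-\frac12\log(w-1)$ to bounds on $\widehat Q_m'$ needs one line of justification. You can use the Laurent expansion at infinity and the explicit logarithmic formula, or the paper's bound $|\widehat Q_m'(w)|\le\max_{[-1,1]}|P_m|/(w^2-1)$.

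In your variables, Step 2 must prove
\[
\bigl|\widehat Q_m'(x+i0)\bigr|^2=-m(m+1)\int_0^{+\infty}\widehat Q_m'\bigl(x^2+(1-x^2)\cosh\phi\bigr)\sinh^2\phi\dd\phi .
\]
This is a Gegenbauer-type (index $3/2$) product formula for second-kind functions, and your sketch does not produce it.

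\begin{itemize}
\item Multiplying two Heine integrals gives a double integral. Collapsing it to a single integral with weight $\sinh^2\phi$ and constant $-m(m+1)$ is precisely what the addition theorem would have to deliver. You assert this (``should produce'') but do not derive it.
\item ``Differentiating twice'' would need three things, none of which you carry out: a two-variable product formula for $\widehat Q_m(z_1)\widehat Q_m(z_2)$, the mixed derivative $\partial_{z_1}\partial_{z_2}$, and an integration by parts in $\phi$ using Legendre's equation to reach the $\sinh^2\phi$ weight.
\item The ODE fallback is only named. Products of two solutions of the second-order equation satisfied by $P_m',Q_m'$ satisfy a \emph{third}-order equation, not a fourth-order one. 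You would have to verify that the right-hand side satisfies it, which requires differentiating under the integral and integrating by parts. You would then have to fix the three free constants of that solution space from the behaviour at $x=0$ and as $x\to1^-$.
\end{itemize}

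The paper closes this step by citation. Durand's formula (3.11), specialized to degree $m-1$, auxiliary index $0$ and $\alpha=3/2$, gives the right-hand side as $-\frac{4m(m+1)}{\pi^2}\int_1^{+\infty}\widehat Q_m'(x^2+(1-x^2)t)\sqrt{t^2-1}\dd t$. The substitution $w=x^2+(1-x^2)t$, which is your variable with $t=\cosh\phi$, then yields the kernel $\sqrt{(w-1)(w+1-2x^2)}$ and the factor $(1-x^2)^{-2}$. To complete your argument, either invoke that formula explicitly and check the normalizations, or actually prove the product formula.
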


\begin{proof}

To establish~\eqref{eq:ferrers-derivative-nicholson}, we specialize Durand's formula~\cite[Section~3.3, Eq.~(3.11)]{durand1978} (which remains unaffected by the erratum~\cite{cohl2026erratum}) to degree $m - 1$, auxiliary index $0$, and parameter $\alpha = 3/2$.
Under the normalization in~\cite[Eqs.~(3.1)--(3.4)]{durand1978}, this specialization reads
\[
P_m'(x)^2 + \frac{4}{\pi^2} Q_m'(x)^2 = -\frac{4m(m+1)}{\pi^2} \int_1^{+\infty} \widehat{Q}_m'\bigl(x^2 + (1 - x^2)t\bigr)\sqrt{t^2 - 1}\,\mathrm{d}t.
\]
Then the identity \eqref{eq:ferrers-derivative-nicholson} is immediately derived by substitution $w = x^2 + (1 - x^2)t$.

Integrating \eqref{eq:legendre-Q-integral} by parts $m$ times via Rodrigues' formula \eqref{eq:legendre-rodrigues} and differentiating under the integral sign, we obtain
\begin{equation*}
\widehat{Q}_m'(w) = -\frac{m+1}{2^{m+1}} \int_{-1}^1 \frac{(1-v^2)^m}{(w-v)^{m+2}}\,\mathrm{d}v < 0 \qquad \text{for all } w > 1.
\end{equation*}
Since $w-v\ge w-1$ for $v\in[-1,1]$, we obtain $\widehat Q_m'(w)=O(w^{-m-2})$ as $w\to+\infty$ from the preceding representation.
Moreover, it follows from the definition~\eqref{eq:legendre-Q-integral} that
\begin{equation*}
  |\widehat Q_m'(w)|
  \le
  \frac{\max_{v\in[-1,1]}|P_m(v)|}{w^2-1},
  \qquad w>1.
\end{equation*}
Hence the integrand in \eqref{eq:ferrers-derivative-nicholson} is $O((w-1)^{-1/2})$ as $w\downarrow1$ and $O(w^{-m-1})$ as $w\to+\infty$.
We thus obtain absolute convergence, completing the proof.
\end{proof}

\subsection{Bessel functions}
\label{subsec:bessel-preliminaries}

For the one-dimensional quadratic objective $f(z) = z^2/2$, the substitution $w(t) = tz(t)$ transforms the NAG-ODE~\eqref{eq:nag-ode} into Bessel's equation of order one~\cite{watson1944}:
\begin{equation}
\label{eq:bessel-equation}
t^2 w''(t) + tw'(t) + (t^2 - 1)w(t) = 0, \qquad \text{for } t > 0.
\end{equation}
The Bessel functions $J_1$ and $Y_1$ of the first and second kinds form a fundamental system of solutions of~\eqref{eq:bessel-equation}~\cite[Section~3.63, Eq.~(1)]{watson1944}.
More generally, for $t > 0$ and real order $\nu$, the Bessel function of the first kind is defined by the convergent series~\cite[Section~3.1, Eq.~(8)]{watson1944}
\begin{equation}
\label{eq:J-definition}
J_\nu(t) := \sum_{j=0}^{+\infty} \frac{(-1)^j}{j!\,\Gamma(j + \nu + 1)} \left(\frac{t}{2}\right)^{2j + \nu},
\end{equation}
while for noninteger $\nu$, the Bessel function of the second kind (in Weber's normalization) is defined by the connection formula~\cite[Section~3.54, Eqs.~(1)--(2)]{watson1944}
\[
Y_\nu(t) := \frac{\cos(\pi\nu)J_\nu(t) - J_{-\nu}(t)}{\sin(\pi\nu)}.
\]
For integer orders $\nu = m \in \mathbb{Z}$, since $J_{-m}(t) = (-1)^m J_m(t)$, both the numerator and denominator vanish at $\nu = m$, and $Y_m(t) := \lim_{\nu \to m} Y_\nu(t)$ is well defined by continuity in $\nu$.
We first record the differentiation identity needed below.

\begin{lemma}[\cite{watson1944}~Section~3.2 Eq.~(4) and~Section~3.56, Eq.~(4)]
\label{lem:bessel-differentiation}
Let $\{C_m\}_{m\ge1}$ denote either $\{J_m\}_{m\ge1}$ or $\{Y_m\}_{m\ge1}$.
For every integer $m\ge1$ and $t>0$, we have
\begin{equation}
  \frac{\mathrm d}{\mathrm dt}\bigl(t^{-m}C_m(t)\bigr)
  =-t^{-m}C_{m+1}(t).
  \label{eq:bessel-scaled-derivative}
\end{equation}
\end{lemma}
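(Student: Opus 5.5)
The identity is classical, so the plan is to derive it directly from the series definition \eqref{eq:J-definition} for $J_m$ and then pass to $Y_m$ by the limiting procedure in the order. The conclusion will then hold for any $C_m\in\{J_m,Y_m\}$.

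\emph{Step 1: the identity for $J_\nu$, arbitrary real $\nu$.} Multiplying \eqref{eq:J-definition} by $t^{-\nu}$ gives
\[
t^{-\nu}J_\nu(t)=\sum_{j\ge0}\frac{(-1)^j t^{2j}}{2^{2j+\nu}j!\,\Gamma(j+\nu+1)}.
\]
This series is entire in $t$ and may be differentiated termwise. The $j=0$ term is constant, so it drops out. For $j\ge1$, write $2j/j!=2/(j-1)!$ and shift the index $j\mapsto j+1$. This yields
\[
-\sum_{j\ge0}\frac{(-1)^j t^{2j+1}}{2^{2j+\nu+1}j!\,\Gamma(j+\nu+2)}=-t^{-\nu}J_{\nu+1}(t),
\]
which is exactly \eqref{eq:bessel-scaled-derivative} for $J_\nu$. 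In particular it holds for $\nu=m$.

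\emph{Step 2: the companion identity for $J_{-\nu}$.} The same termwise computation gives $\frac{\mathrm d}{\mathrm dt}\bigl(t^{\nu}J_{-\nu}(t)\bigr)=-t^{\nu}J_{-\nu+1}(t)$. Dividing by $t^{2\nu}$ and using the product rule, this is equivalent to
\[
\frac{\mathrm d}{\mathrm dt}\bigl(t^{-\nu}J_{-\nu}(t)\bigr)=t^{-\nu}J_{-\nu-1}(t).
\]
When the $j=0$ term has $1/\Gamma(1-\nu)=0$, the reindexing still works because those coefficients simply vanish. For noninteger $\nu$, combine the two identities in the definition of $Y_\nu$:
\[
\frac{\mathrm d}{\mathrm dt}\bigl(t^{-\nu}Y_\nu\bigr)=t^{-\nu}\,\frac{-\cos(\pi\nu)J_{\nu+1}-J_{-\nu-1}}{\sin(\pi\nu)}.
\]
Since $\cos(\pi(\nu+1))=-\cos(\pi\nu)$ and $\sin(\pi(\nu+1))=-\sin(\pi\nu)$, the right-hand side equals $-t^{-\nu}Y_{\nu+1}$. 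So the identity holds for $Y_\nu$ at every noninteger $\nu$.

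\emph{Step 3: passage to integer order $\nu\to m$.} This is the only delicate step. The plan is to show that $Y_\nu(t)\to Y_m(t)$ and $\partial_tY_\nu(t)\to\partial_tY_m(t)$ locally uniformly in $t>0$. I would use L'Hôpital in $\nu$: $Y_m$ is expressed through $\partial_\nu J_{\pm\nu}$ at $\nu=m$. The series for $J_{\pm\nu}$ and their $t$-derivatives converge locally uniformly jointly in $(\nu,t)$, and $1/\Gamma$ is entire. Hence $\nu$- and $t$-derivatives can be interchanged, and the limit can be taken on both sides of the identity from Step 2, with $Y_{\nu+1}\to Y_{m+1}$ obtained in the same way. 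Alternatively, one can simply cite Watson's recurrence $Y_m'=Y_{m-1}-\tfrac{m}{t}Y_m$ together with $Y_{m-1}+Y_{m+1}=\tfrac{2m}{t}Y_m$, which combine to give the claim for $Y_m$.
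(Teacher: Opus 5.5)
You are taking a different route from the paper, which gives no argument and simply cites Watson (\S3.2, Eq.~(4) for $J_m$, and \S3.56, Eq.~(4) for $Y_m$). Your plan is to derive the identity from the series \eqref{eq:J-definition}, push it to $Y_\nu$ for noninteger $\nu$ through the connection formula, and then let $\nu\to m$. That is a reasonable self-contained alternative. Step~1, the final sign bookkeeping in Step~2, and the limiting argument in Step~3 are sound.

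One link in Step~2 does not work as written. Applying Step~1 with $-\nu$ in place of $\nu$ does give $\frac{\mathrm d}{\mathrm dt}\bigl(t^{\nu}J_{-\nu}\bigr)=-t^{\nu}J_{1-\nu}$. But dividing by $t^{2\nu}$ and using the product rule only yields
\[
\frac{\mathrm d}{\mathrm dt}\bigl(t^{-\nu}J_{-\nu}(t)\bigr)=-\frac{2\nu}{t}\,t^{-\nu}J_{-\nu}(t)-t^{-\nu}J_{1-\nu}(t).
\]
Turning the right-hand side into $t^{-\nu}J_{-\nu-1}(t)$ requires the three-term recurrence $J_{\mu-1}+J_{\mu+1}=\frac{2\mu}{t}J_\mu$ at $\mu=-\nu$. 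That recurrence does not follow from Step~1 plus the product rule. Those two tools only ever express $\frac{\mathrm d}{\mathrm dt}(t^{\mu}J_\mu)$ as $2\mu t^{\mu-1}J_\mu-t^{\mu}J_{\mu+1}$. Identifying this with $t^{\mu}J_{\mu-1}$ is the raising identity, which is exactly what Step~2 needs. So the two identities are not ``equivalent'' in the way you claim.

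The identity you want is true, and the repair is a second direct termwise computation, with no reindexing. For every real $\mu$ and $t>0$,
\[
\frac{\mathrm d}{\mathrm dt}\bigl(t^{\mu}J_\mu(t)\bigr)
=\sum_{j\ge0}\frac{(-1)^j\,2(j+\mu)\,t^{2j+2\mu-1}}{2^{2j+\mu}\,j!\,\Gamma(j+\mu+1)}
=t^{\mu}J_{\mu-1}(t).
\]
This uses $(j+\mu)/\Gamma(j+\mu+1)=1/\Gamma(j+\mu)$, which holds as an identity of entire functions of $\mu$ and hence also at the poles of $\Gamma$. Setting $\mu=-\nu$ gives $\frac{\mathrm d}{\mathrm dt}(t^{-\nu}J_{-\nu})=t^{-\nu}J_{-\nu-1}$, and your combination with the connection formula then goes through verbatim.

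With this fix the argument is complete. Compared with the paper's bare citation, it relies only on the series \eqref{eq:J-definition} and the limit definition of $Y_m$ stated in the paper. Your closing alternative, combining $Y_m'=Y_{m-1}-\frac{m}{t}Y_m$ with $Y_{m-1}+Y_{m+1}=\frac{2m}{t}Y_m$, is correct and is essentially the paper's approach of deferring to Watson.
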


For estimates near $t=0$, we introduce the following ascending series representations.
\begin{lemma}[\cite{watson1944}~Section~3.1, Eq.~(8) and Section~3.52, Eq.~(3)]
\label{lem:bessel-ascending}
For every integer $m\ge1$ and every $t>0$, the Bessel functions $J_m$ and $Y_m$ admit the convergent series representations
\begin{align}
  J_m(t)
  &=\sum_{j=0}^{+\infty}
    \frac{(-1)^j}{j!(m+j)!}\left(\frac t2\right)^{m+2j}
  \label{eq:Jn-ascending}
  \\
  \mbox{and}\quad Y_m(t)
  &=\frac2\pi\left(\log\frac t2+\gamma_{\rm E}\right)J_m(t)
    -\frac1\pi
    \sum_{j=0}^{m-1}\frac{(m-j-1)!}{j!}
    \left(\frac{t}{2}\right)^{2j-m} \notag\\
  &\quad
    -\frac1\pi\sum_{j=0}^{+\infty}
    \frac{(-1)^j(H_j+H_{m+j})}{j!(m+j)!}
    \left(\frac t2\right)^{m+2j},
  \label{eq:Yn-ascending}
\end{align}
where $\gamma_{\rm E}$ is Euler's constant, $H_0:=0$, and $H_m:=\sum_{\ell=1}^m\ell^{-1}$ for $m\ge1$.
\end{lemma}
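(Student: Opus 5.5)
The first identity \eqref{eq:Jn-ascending} is immediate from the definition \eqref{eq:J-definition}: for $\nu=m$ a positive integer, $\Gamma(j+m+1)=(m+j)!$. The real content is \eqref{eq:Yn-ascending}. I would obtain it by evaluating the limit $\nu\to m$ in Weber's connection formula with l'H\^opital's rule. Since $\sin(\pi\nu)$ has derivative $\pi\cos(\pi m)=(-1)^m\pi$ at $\nu=m$, and the numerator vanishes there, this gives
\[
Y_m(t)=\frac1\pi\Bigl[\partial_\nu J_\nu(t)-(-1)^m\partial_\nu J_{-\nu}(t)\Bigr]_{\nu=m}.
\]
The $\partial_\nu$ of the $\cos(\pi\nu)$ factor contributes $-\pi\sin(\pi m)J_m=0$. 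To justify this step and the termwise differentiation below, I would note that for fixed $t>0$ each term of \eqref{eq:J-definition} is entire in $\nu$, because $1/\Gamma$ is entire. The series converges locally uniformly in $\nu$, since its tail is dominated by $(t/2)^{2j+\Re\nu}/(j!\,|\Gamma(j+\nu+1)|)$, which decays factorially. Hence $\nu\mapsto J_{\pm\nu}(t)$ is analytic and may be differentiated term by term.

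Next I would compute $\partial_\nu J_\nu$ at $\nu=m$. Differentiating $(t/2)^{2j+\nu}/\Gamma(j+\nu+1)$ gives
\[
\bigl(\log\tfrac t2-\psi(j+\nu+1)\bigr)\frac{(t/2)^{2j+\nu}}{\Gamma(j+\nu+1)}.
\]
Using $\psi(n+1)=-\gamma_{\rm E}+H_n$, this yields
\[
\partial_\nu J_\nu\big|_{\nu=m}=\bigl(\log\tfrac t2+\gamma_{\rm E}\bigr)J_m(t)-\sum_{j\ge0}\frac{(-1)^jH_{m+j}}{j!(m+j)!}\bigl(\tfrac t2\bigr)^{m+2j}.
\]

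For $\partial_\nu J_{-\nu}$ at $\nu=m$, I would split the sum at $j=m$.

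For $j\ge m$, substitute $j=m+i$. Differentiating $(t/2)^{2j-\nu}/\Gamma(j-\nu+1)$ gives $-\bigl(\log\tfrac t2-\psi(j-\nu+1)\bigr)$ times the term. At $\nu=m$ this becomes
\[
-(-1)^m\Bigl[\bigl(\log\tfrac t2+\gamma_{\rm E}\bigr)J_m-\sum_i\frac{(-1)^iH_i}{i!(m+i)!}\bigl(\tfrac t2\bigr)^{m+2i}\Bigr],
\]
where I use $(-1)^{m+i}=(-1)^m(-1)^i$ and the factorials $i!\,(m+i)!$.

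For $0\le j\le m-1$, the factor $1/\Gamma(j-\nu+1)$ vanishes at $\nu=m$, so only its $\nu$-derivative survives. Near the pole at $-k$, with $k=m-j-1\ge0$, one has $1/\Gamma(-k+\varepsilon)=(-1)^k k!\,\varepsilon+O(\varepsilon^2)$. Since $j-\nu+1=-k-(\nu-m)$, the derivative of $1/\Gamma(j-\nu+1)$ at $\nu=m$ is $-(-1)^k k!$. Multiplying by $(-1)^j/j!$ and $(t/2)^{2j-m}$, and using $(-1)^{j+k}=(-1)^{m-1}$, gives the finite sum
\[
(-1)^m\sum_{j=0}^{m-1}\frac{(m-j-1)!}{j!}\bigl(\tfrac t2\bigr)^{2j-m}.
\]

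Finally I would multiply the $J_{-\nu}$ contributions by $-(-1)^m$, add them to $\partial_\nu J_\nu$, and divide by $\pi$. The two $(\log\frac t2+\gamma_{\rm E})J_m$ terms combine into the factor $\frac2\pi$. The $H_i$ and $H_{m+i}$ series merge into the $-\frac1\pi\sum(H_j+H_{m+j})$ series. The finite sum acquires the coefficient $-\frac1\pi$. Together these give exactly \eqref{eq:Yn-ascending}. The main obstacle is purely bookkeeping: tracking the signs coming from $(-1)^m$, from the residue of $1/\Gamma$ at nonpositive integers, and from the $-\nu$ in the exponent. I would verify the final signs against the case $m=1$, where the leading singular term of $Y_1$ must be $-\frac{2}{\pi t}$, and the finite sum indeed gives $-\frac1\pi(t/2)^{-1}=-\frac{2}{\pi t}$.
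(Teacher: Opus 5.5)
Your proposal is correct, and the sign bookkeeping checks out. The l'H\^opital step gives $Y_m=\frac1\pi\bigl[\partial_\nu J_\nu-(-1)^m\partial_\nu J_{-\nu}\bigr]_{\nu=m}$. The values $\psi(n+1)=-\gamma_{\rm E}+H_n$ produce the $H_{m+j}$ and $H_i$ series. The residue $(-1)^k/k!$ of $\Gamma$ at $-k$ produces the finite sum, which after multiplication by $-(-1)^m$ carries the coefficient $-\frac1\pi$.

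For \eqref{eq:Jn-ascending} you do exactly what the paper does. For \eqref{eq:Yn-ascending}, however, the paper gives no derivation. It cites Watson's ascending expansion (Section~3.52, Eq.~(3)) and transfers it to the Weber normalization via Section~3.54, Eq.~(2). You instead evaluate the limit in the connection formula directly, using that $1/\Gamma$ is entire to justify analyticity in $\nu$ and termwise differentiation.

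The citation is shorter and leans on a classical reference. Your derivation is self-contained from the paper's own definitions, namely the series \eqref{eq:J-definition} and $Y_m:=\lim_{\nu\to m}Y_\nu$. It also independently fixes the exact constants: the $\gamma_{\rm E}$ inside $\frac2\pi(\log\frac t2+\gamma_{\rm E})J_m$ and the combination $H_j+H_{m+j}$. These are precisely what the appendix relies on when it regroups the series into \eqref{eq:Yn-complete-small}, with coefficients $H_j+H_{m+j}-2\gamma_{\rm E}$, to obtain its numerical bounds.
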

\begin{proof}
Setting $\nu = m$ in the defining series \eqref{eq:J-definition} and using $\Gamma(j + m + 1) = (m + j)!$, we obtain \eqref{eq:Jn-ascending}.
The series expansion \eqref{eq:Yn-ascending} for $Y_m$ follows from \cite[Section~3.52, Eq.~(3)]{watson1944} under the normalization in \cite[Section~3.54, Eq.~(2)]{watson1944}.
\end{proof} 
 
We use the following fixed-order expansions to obtain the objective-gap asymptotic in Corollary~\ref{coro:continuous-counterexample}.

\begin{lemma}[\cite{watson1944}~Section~7.21, Eqs.~(1) and~(5)]
\label{lem:bessel-asymptotics}
For a fixed integer $m\ge1$, set $\phi_m(t):=t-\frac{m\pi}{2}-\frac{\pi}{4}$.
Then, as $t\to+\infty$, we have
\begin{align}
  J_m(t)
  &=\sqrt{\frac{2}{\pi t}}
    \Big(
      \cos\phi_m(t)-\frac{4m^2-1}{8t}\sin\phi_m(t)
      +O(t^{-2})
    \Big)
  \label{eq:Jn-large}
  \\
  \mbox{and}\quad Y_m(t)
  &=\sqrt{\frac{2}{\pi t}}
    \Big(
      \sin\phi_m(t)+\frac{4m^2-1}{8t}\cos\phi_m(t)
      +O(t^{-2})
    \Big).
  \label{eq:Yn-large}
\end{align}
\end{lemma}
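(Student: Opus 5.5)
The plan is to derive both expansions at once from the Hankel function $H_m^{(1)}:=J_m+\mathrm{i}Y_m$, since \eqref{eq:Jn-large} and \eqref{eq:Yn-large} are the real and imaginary parts of the single complex statement
\[
H_m^{(1)}(t)=\sqrt{\frac{2}{\pi t}}\,\ee^{\mathrm{i}\phi_m(t)}\Bigl(1+\mathrm{i}\,\frac{4m^2-1}{8t}+O(t^{-2})\Bigr).
\]
Indeed, writing $\ee^{\mathrm{i}\phi}(1+\mathrm{i}a/t)=(\cos\phi-\tfrac{a}{t}\sin\phi)+\mathrm{i}(\sin\phi+\tfrac{a}{t}\cos\phi)$ with $a=(4m^2-1)/8$ gives exactly the two stated leading terms. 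Because $J_m$ and $Y_m$ are real for $t>0$, the complex $O(t^{-2})$ remainder splits into real $O(t^{-2})$ remainders for each part.

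To obtain the displayed expansion, I would use the Poisson/Hankel-type integral representation, valid for $\nu>-\tfrac12$ and $t>0$ (Watson, Section~6.12):
\[
H_\nu^{(1)}(t)=\sqrt{\frac{2}{\pi t}}\,\frac{\ee^{\mathrm{i}(t-\nu\pi/2-\pi/4)}}{\Gamma(\nu+\tfrac12)}\int_0^{+\infty}\ee^{-u}u^{\nu-1/2}\Bigl(1+\frac{\mathrm{i}u}{2t}\Bigr)^{\nu-1/2}\dd u .
\]
I would specialize it to $\nu=m\ge1$ and set $\alpha:=m-\tfrac12\ge\tfrac12$. Next, I would expand $(1+\mathrm{i}u/(2t))^{\alpha}=1+\alpha\,\mathrm{i}u/(2t)+R(u,t)$. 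The linear term integrates to
\[
\alpha\,\frac{\mathrm{i}}{2t}\,\frac{\Gamma(m+\tfrac32)}{\Gamma(m+\tfrac12)}=\mathrm{i}\,\frac{(m-\tfrac12)(m+\tfrac12)}{2t}=\mathrm{i}\,\frac{4m^2-1}{8t},
\]
which is the required first-order coefficient.

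The only real work, and the main obstacle, is bounding the remainder $R$ uniformly in $u\in(0,\infty)$. By Taylor's formula with integral remainder along the segment $z\in[0,\mathrm{i}u/(2t)]$, and using $|1+\mathrm{i}s|\ge1$ together with $|1+\mathrm{i}s|\le 1+s$, I expect
\[
|R(u,t)|\le \frac{|\alpha(\alpha-1)|}{2}\Bigl(\frac{u}{2t}\Bigr)^2\Bigl(1+\frac{u}{2t}\Bigr)^{\max(\alpha-2,0)}.
\]
For $t\ge1$ this is at most $C_m u^2(1+u)^{m}t^{-2}$. This bound is integrable against $\ee^{-u}u^{m-1/2}$, so the remainder contributes $O(t^{-2})$ inside the bracket.

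If one prefers to avoid the integral representation, the fallback is simply to cite Watson~Section~7.21, Eqs.~(1) and~(5), truncated after the $k=1$ term of Hankel's series. The truncation error there is of the order of the first omitted term, $O(t^{-2})$, for fixed order.
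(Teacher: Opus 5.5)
Your proposal is correct. The paper gives no argument beyond the citation: the lemma is quoted from Watson, Section~7.21, which is exactly your fallback. Your main route is genuinely different in that it derives the two-term expansion from Hankel's integral for $H_m^{(1)}=J_m+\mathrm{i}Y_m$ (Watson, Section~6.12), essentially the classical derivation of Hankel's expansions, and the details check out:
\begin{itemize}
\item The linear term gives $\mathrm{i}(m-\tfrac12)(m+\tfrac12)/(2t)=\mathrm{i}(4m^2-1)/(8t)$.
\item The second-order Taylor remainder along $[0,\mathrm{i}u/(2t)]$ is legitimate, because $1+z$ stays in the right half-plane where the principal branch is analytic.
\item Since the exponent $\alpha-2$ is real, $|(1+z)^{\alpha-2}|=|1+z|^{\alpha-2}\le(1+|z|)^{\max(\alpha-2,0)}$. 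This gives your bound and the integrable majorant $C_m t^{-2}\ee^{-u}u^{m+3/2}(1+u)^m$ for $t\ge1$.
\item Passing to real and imaginary parts is harmless, since $|\Re(\ee^{\mathrm{i}\phi}E)|$ and $|\Im(\ee^{\mathrm{i}\phi}E)|$ are both at most $|E|$.
\end{itemize}

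The citation buys brevity, plus the full asymptotic series with Watson's remainder estimates, which the paper never needs. Your derivation buys self-containment modulo one classical integral formula, explicit constants, and a simultaneous treatment of $J_m$ and $Y_m$. As a bonus, the complex form immediately gives $J_m(t)^2+Y_m(t)^2=|H_m^{(1)}(t)|^2=\frac{2}{\pi t}\bigl(1+O(t^{-2})\bigr)$. That is precisely the cancellation of mixed sine--cosine terms used in the proof of Corollary~\ref{coro:continuous-counterexample}, obtained without squaring and adding the two real expansions.
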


Finally, we recall Nicholson's product formula for the sum of squares $J_m^2 + Y_m^2$.
\begin{lemma}[Nicholson's formula \cite{watson1944}]
\label{lem:bessel-nicholson}
For every integer $m\ge1$ and $t>0$, we have
\begin{equation}
  J_m(t)^2+Y_m(t)^2
  =\frac8{\pi^2}\int_0^{+\infty}
    K_0(2t\sinh r)\cosh(2mr)\dd r
  \label{eq:bessel-nicholson}
\end{equation}
where the function $K_0:(0,+\infty)\to\R$ is defined by
\begin{equation}
  K_0(v):=\int_0^{+\infty}\ee^{-v\cosh s}\dd s,
  \qquad \text{for }v>0.
  \label{eq:K0-integral}
\end{equation}
\end{lemma}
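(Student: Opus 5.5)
The statement is Nicholson's formula, and I would prove it by a uniqueness argument for a third-order linear ODE rather than by manipulating Hankel-function integrals directly. Fix an integer $m\ge1$ and let $\mathcal L$ denote the third-order operator
\[
\mathcal L[P](t):=t^2P'''(t)+3tP''(t)+\bigl(4t^2+1-4m^2\bigr)P'(t)+4tP(t).
\]
This is the classical equation satisfied by every product $C_m(t)D_m(t)$ of two solutions of Bessel's equation of order $m$. The first step is to verify this by direct differentiation, using \eqref{eq:bessel-equation} (for general order $m$) to eliminate second derivatives. Since $J_m^2$, $J_mY_m$, $Y_m^2$ are linearly independent (their Wronskian is a nonzero multiple of $W(J_m,Y_m)^3$), they span the solution space of $\mathcal L[P]=0$ on $(0,+\infty)$.

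The second step is to show that the right-hand side
\[
R(t):=\frac8{\pi^2}\int_0^{+\infty}K_0(2t\sinh r)\cosh(2mr)\dd r
\]
also satisfies $\mathcal L[R]=0$. From \eqref{eq:K0-integral}, $K_0$ is smooth and decreasing on $(0,+\infty)$, satisfies $vK_0''+K_0'-vK_0=0$, has at worst a logarithmic singularity at $0$, and decays like $\ee^{-v}$ at infinity. For fixed $t>0$ the factor $K_0(2t\sinh r)$ decays like $\exp(-t\ee^{r})$, which dominates $\cosh(2mr)$. This justifies differentiating three times under the integral sign, locally uniformly in $t$. I would then write $\mathcal L$ applied to $K_0(2t\sinh r)$ as a combination of derivatives in $r$. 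The term $-4m^2P'$ is absorbed by integrating by parts twice against $\cosh(2mr)$, using $\partial_r^2\cosh(2mr)=4m^2\cosh(2mr)$, and the remaining terms cancel because of the modified Bessel equation for $K_0$. The boundary terms vanish at $r=+\infty$ by the super-exponential decay. At $r=0$ they vanish because each carries a factor $\sinh r$ or $t\cosh r\,K_0'(2t\sinh r)\cdot\sinh r$, and $vK_0'(v)\to-1$ is bounded. I expect this bookkeeping, in particular pinning down the exact $r$-derivative identity and the boundary terms at $r=0$, to be the main obstacle. If it resists, I would first try working with $\partial_t$ of $R$ (which reduces the order by one) and verify the identity for $R'$ instead.

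The third step identifies the coefficients. Write $R=aJ_m^2+bJ_mY_m+cY_m^2$. By Lemma~\ref{lem:bessel-asymptotics},
\[
R(t)=\frac{2}{\pi t}\Bigl(\frac{a+c}{2}+\frac{a-c}{2}\cos 2\phi_m(t)+\frac b2\sin2\phi_m(t)+O(t^{-1})\Bigr).
\]
It then suffices to show that $R(t)=\frac{2}{\pi t}+O(t^{-2})$, because a nonzero oscillating term of order $t^{-1}$ cannot be $O(t^{-2})$; this forces $a=c=1$ and $b=0$. For the asymptotic of $R$, I would substitute $u=2t\sinh r$:
\[
R(t)=\frac{8}{\pi^2}\int_0^{+\infty}K_0(u)\,\frac{\cosh(2m\,\mathrm{arcsinh}(u/2t))}{\sqrt{4t^2+u^2}}\dd u .
\]
The kernel equals $\frac{1}{2t}\bigl(1+O(u^2/t^2)\bigr)$ for $u\le t$, and the range $u>t$ is exponentially small. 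Combined with $\int_0^{+\infty}K_0(u)\dd u=\pi/2$ (obtained from \eqref{eq:K0-integral} by Fubini) and $\int_0^{+\infty}u^2K_0(u)\dd u<+\infty$, this gives
\[
R(t)=\frac8{\pi^2}\cdot\frac{\pi}{4t}+O(t^{-3})=\frac{2}{\pi t}+O(t^{-3}),
\]
which concludes the proof.
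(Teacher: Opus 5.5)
Your argument is correct, and it takes a genuinely different route from the paper. The paper does not derive the formula: it cites Watson \S13.73 for the product identity and \S6.22 for the integral representation of $K_0$. You replace the citation with a self-contained verification in three steps. First, the third-order equation $\mathcal L[P]=0$ satisfied by products of order-$m$ Bessel functions. Second, a direct check that the right-hand side $R$ solves it. Third, identification of the three coefficients from large-$t$ behaviour. This uses only Lemma~\ref{lem:bessel-asymptotics}, which the paper already states, and elementary properties of $K_0$ read off from \eqref{eq:K0-integral}. The price is length and some dominated-convergence bookkeeping. The gain is that the positivity and monotonicity of $J_1^2+Y_1^2$, which is all the paper later uses, rests on an argument checkable from first principles. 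Your coefficient identification is sound, including $\int_0^{+\infty}K_0=\pi/2$ and $R(t)=2/(\pi t)+O(t^{-3})$.

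The $r$-derivative identity you were unsure about holds exactly. Set $F(t,r):=K_0(2t\sinh r)$, $v:=2t\sinh r$, and $\mathcal L_0:=\mathcal L+4m^2\partial_t$, with $\mathcal L$ acting in $t$. The modified Bessel equation $vK_0''+K_0'=vK_0$ gives
\[
\partial_r\partial_tF=2t\sinh(2r)\,K_0(v)
\qquad\mbox{and}\qquad
\mathcal L_0[F]=\partial_r^2\partial_tF .
\]
Hence $\mathcal L[F]=(\partial_r^2-4m^2)\partial_tF$, and two integrations by parts give
\[
\int_0^{+\infty}\mathcal L[F]\cosh(2mr)\dd r=\Bigl[\partial_r\partial_tF\,\cosh(2mr)-2m\,\partial_tF\,\sinh(2mr)\Bigr]_{r=0}^{r=+\infty}.
\]

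One correction to your boundary reasoning at $r=0$. The term $\partial_r\partial_tF\cosh(2mr)$ does not vanish merely because of a $\sinh r$ factor together with boundedness of $vK_0'(v)$. Written naively as $2\cosh r\,K_0'(v)+4t\sinh r\cosh r\,K_0''(v)$, its two pieces blow up like $-1/(t\sinh r)$ and $+1/(t\sinh r)$ respectively. It tends to $0$ only after the Bessel equation collapses it to $2t\sinh(2r)K_0(v)$, because $K_0$ has just a logarithmic singularity. The other term vanishes for the reason you give: $\partial_tF=vK_0'(v)/t\to-1/t$ stays bounded while $\sinh(2mr)\to0$.
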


\begin{proof}
The identity \eqref{eq:bessel-nicholson} follows from \cite[Section~13.73, Eq.~(1)]{watson1944} with order $m$, while the integral representation \eqref{eq:K0-integral} follows from Watson's formula in \cite[Section~6.22, Eq.~(5)]{watson1944} with order $0$.
\end{proof}

\section{Proof of Theorem \ref{thm:discrete-counterexample}}
\label{sec:discrete}
In this section, we establish Theorem~\ref{thm:discrete-counterexample} by constructing and analyzing a two-dimensional discrete counterexample.
We prescribe the sequence $\{Z_k\}$ in \eqref{eq:discrete-tail} using the associated Ferrers functions of order $-1$ defined in \eqref{eq:negative-order-identities}.
Using Lemma~\ref{lem:ferrers-identities}, we verify that this sequence satisfies the quadratic NAG recurrence \eqref{eq:tail-recurrence}.
We then use the integral representation in Lemma~\ref{lem:ferrers-nicholson} to prove that $\langle Z_k,Z_k-Z_{k-1}\rangle<0$ for every $k\ge1$.
However, because this target sequence involves a second-kind Ferrers component, it is incompatible with the standard zero-momentum initialization under a purely quadratic objective.
To resolve this obstruction, we modify the second component of the quadratic objective outside a half-space containing the origin to match the initial steps.

We construct the objective function and the initial point in Subsection~\ref{subsec:discrete-construction}.
In Subsection~\ref{subsec:discrete-verification}, we verify that the objective belongs to $\mathcal{S}_{\mu,L}(\mathbb{R}^2)$ and that the resulting NAG iterates never trigger gradient restart.

\subsection{Construction of the discrete counterexample}
\label{subsec:discrete-construction}

We construct the objective $f_{\mathrm{d}}$ by adding a univariate perturbation to an isotropic quadratic $(\xi, v) \mapsto \mu_{\mathrm{d}}(\xi^2 + v^2)/2$ with $\mu_{\mathrm{d}} > 0$ along its second coordinate.
Specifically, for two thresholds $c < U$, we design the perturbation $-h_{\mathrm{d}}(v)$ of the second gradient component to vanish on the half-space $v \le c$, transition smoothly on $[c, U]$, and remain constant for $v \ge U$.
We first fix the step size $s:=1$ and the constants
\begin{equation}
  a:=\frac9{10},
  \qquad
  \mu_{\rm d}:=\frac1{10},
  \qquad
  \rho:=\frac3{\sqrt{10}},
  \qquad
  c:=\frac18,
  \qquad\mbox{and}\qquad
  \varepsilon:=\frac1{1000}.
  \label{eq:discrete-parameters}
\end{equation}
Here $\mu_{\rm d}$ is the quadratic curvature, $a=1-\mu_{\rm d}$ is the corresponding gradient-step factor, and $\rho=\sqrt a\in(0,1)$ is the Ferrers parameter.
The constants $c$ and $\varepsilon$ specify the quadratic region and the transition widths, respectively.
Set
\begin{equation}
  U:=\frac{\rho+\mu_{\rm d}\operatorname{arctanh}\rho}{\pi}
  \qquad\mbox{and}\qquad
  d:=\frac{2\rho}{3\pi}=\frac{2}{\pi\sqrt{10}}.
  \label{eq:U-d}
\end{equation}
The constants $U$ and $d$ are chosen to match the first two terms of the target sequence.
To construct $h_{\rm d}$, we first introduce the continuous piecewise-linear function $\sigma:\R\to[0,1]$ supported on $[c,U]$ by
\begin{equation}
  \sigma(t):=
  \begin{cases}
    0,&t\le c,\\
    (t-c)/\varepsilon,&c<t<c+\varepsilon,\\
    1,&c+\varepsilon\le t\le U-\varepsilon,\\
    (U-t)/\varepsilon,&U-\varepsilon<t<U,\\
    0,&t\ge U.
  \end{cases}
  \label{eq:sigma-definition}
\end{equation}
Then $h_{\rm d}:\R\to\R$ is defined by
\begin{equation}
  h_{\rm d}(t):=-b_{\rm d}\int_{-\infty}^t\sigma(r)\dd r
  \qquad\mbox{and}\qquad
  b_{\rm d}:=\frac{d}{U-c-\varepsilon}.
  \label{eq:discrete-h}
\end{equation}
Since $\sigma \ge 0$ is supported on $[c, U]$ and satisfies $\int_c^U \sigma(r)\,\mathrm{d}r = U - c - \varepsilon$ by \eqref{eq:sigma-definition}, it follows from \eqref{eq:discrete-h} that $h_{\mathrm{d}}$ decreases from $0$ to $-d$ on $[c, U]$ and remains constant outside this interval.
Finally, we define the objective function $f_{\rm d}:\R^2\to\R$ by
\begin{equation}
  f_{\rm d}(\xi,v)
  :=
  \frac{\mu_{\rm d}}{2}(\xi^2+v^2)
  -\int_0^v h_{\rm d}(t)\dd t,
  \label{eq:discrete-objective}
\end{equation}
and the initial point by
\begin{equation}
  x_0:=y_0
  :=
  \left(
    \frac{\mu_{\rm d}}{2a},
    \frac{U+d}{a}
  \right).
  \label{eq:discrete-initial-point}
\end{equation}
For $s = 1$, the gradient-step map is given by
\begin{equation}
\label{eq:discrete-gradient-map}
  \mathcal{T}(\xi, v) := (\xi, v) - \nabla f_{\mathrm{d}}(\xi, v) = (a\xi, av + h_{\mathrm{d}}(v)).
\end{equation}
Since $h_{\mathrm{d}}(v) = 0$ for $v \le c$, the objective $f_{\mathrm{d}}$ coincides with $\mu_{\mathrm{d}}(\xi^2 + v^2)/2$ on this half-space, where $\mathcal{T}$ reduces to scalar multiplication by $a$ and the NAG updates follow the unmodified quadratic recurrence.
Above the threshold $U$, the constant shift $h_{\mathrm{d}}(v) = -d$ and the initial point $x_0$ are calibrated so that the zero-momentum initialization produces the first two terms of the target sequence defined in \eqref{eq:discrete-tail} and drives the subsequent iterates into the quadratic region $v \le c$.
Before proceeding to the formal verification, we present a numerical evaluation in Figure~\ref{fig:discrete-counterexample} to illustrate the strict negativity of the restart quantities $D_k$ up to $k = 10^4$.

\begin{figure}[!ht]
  \centering
  \includegraphics[width=0.8\textwidth]{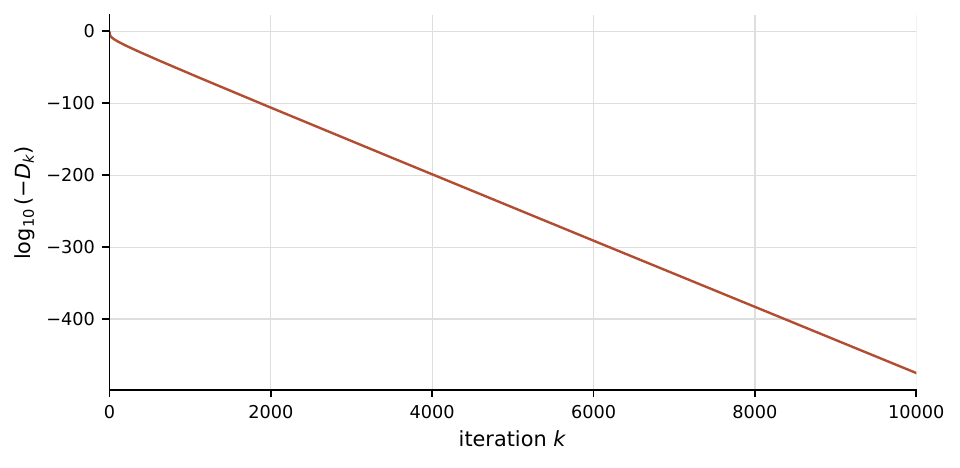}
\caption{Numerical evaluation of the discrete construction for $0\le k\le10^4$, computed with 100-digit arithmetic.
The curve shows $\log_{10}(-D_k)$, which visually illustrates the strict negativity proved in Theorem~\ref{thm:discrete-counterexample}.}
  \label{fig:discrete-counterexample}
\end{figure}

\subsection{Verification of the discrete counterexample}
\label{subsec:discrete-verification}
Whenever the extrapolated iterates lie in the half-space $v \le c$, the NAG updates reduce to the isotropic quadratic recurrence.
Motivated by the Ferrers degree recurrence in Lemma~\ref{lem:ferrers-identities}, we define $\Phi_m : (-1, 1) \to \R^2$ for each integer $m \ge 1$ by
\begin{equation}
\label{eq:Phi-definition}
  \Phi_m(x) := \left(P_m^{(-1)}(x), \frac{2}{\pi}Q_m^{(-1)}(x)\right),
\end{equation}
where $P_m^{(-1)}$ and $Q_m^{(-1)}$ are the associated Ferrers functions of order $-1$ defined in~\eqref{eq:negative-order-identities}, and set
\begin{equation}
\label{eq:discrete-tail}
  Z_k := \rho^k \sqrt{\mu_{\mathrm{d}}}\, \Phi_{k+1}(\rho) \quad \text{for } k \ge 0.
\end{equation}
In the remainder of this subsection, we first establish the recurrence and radial monotonicity of $\{Z_k\}$ in Lemma~\ref{lem:ferrers-tail}, and then prove in Lemma~\ref{lem:discrete-orbit} that the NAG iterates satisfy $x_{k+1} = Z_k$ for $k \ge 0$ and $y_k = Z_k/a$ with $y_k$ remaining in the half-space $v \le c$ for $k \ge 2$.
We then use these results to verify that $f_{\mathrm{d}} \in \mathcal{S}_{\mu,L}(\mathbb{R}^2)$ and that $D_k < 0$ for all $k \ge 0$, thereby proving Theorem~\ref{thm:discrete-counterexample}.

We first establish several elementary parameter bounds to verify that $h_{\mathrm{d}}$ is well defined with $0 < b_{\mathrm{d}} < a$ and that the extrapolated point $y_2$ lies in the half-space $v \le c$.

\begin{lemma}
\label{lem:discrete-parameters}
The constants in \eqref{eq:discrete-parameters}--\eqref{eq:U-d} and the sequence $\{Z_k\}$ in \eqref{eq:discrete-tail} satisfy
\begin{equation}
  U>c+2\varepsilon,
  \qquad
  0<d<a(U-c-\varepsilon),
  \qquad\mbox{and}\qquad
  \norm{5Z_1-Z_0}<4c.
  \label{eq:discrete-parameter-bounds}
\end{equation}
\end{lemma}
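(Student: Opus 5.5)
The plan is to reduce each inequality in \eqref{eq:discrete-parameter-bounds} to a closed-form expression in $\rho$, $\mu_{\rm d}$, $\pi$ and $\operatorname{arctanh}\rho$, and then certify it with rigorous rational enclosures. No asymptotics and no results beyond the definitions are needed. The only transcendental inputs are $\pi$ and $\operatorname{arctanh}\rho=\tfrac12\log\frac{1+\rho}{1-\rho}$. Since $1-\rho^2=\mu_{\rm d}=1/10$ and $\sqrt{1-\rho^2}=1/\sqrt{10}$, all algebraic quantities are simple.

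\textbf{First two inequalities.} I will bracket the constants: $\sqrt{10}\in(3.1622,3.1623)$, so $\rho\in(0.94868,0.94869)$; then $\operatorname{arctanh}\rho\in(1.818,1.819)$ from $\frac{1+\rho}{1-\rho}\in(37.97,37.98)$ and elementary bounds on $\log$; and $\pi\in(3.1415,3.1416)$. These give $U\approx0.3599$ and $d\approx0.2013$.
\begin{itemize}
\item $U>c+2\varepsilon=0.127$ holds with a large margin.
\item $d>0$ is immediate.
\item $d<a(U-c-\varepsilon)=0.9\,(U-0.126)\approx0.2105$ has a margin of only about $0.009$. I will therefore write $a(U-c-\varepsilon)-d$ explicitly and use an interval enclosure accurate to about $10^{-3}$, which the brackets above provide.
\end{itemize}

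\textbf{Third inequality.} I will compute $Z_0$ and $Z_1$ explicitly from \eqref{eq:legendre-rodrigues}, \eqref{eq:second-kind-Q-definition} and \eqref{eq:negative-order-identities}. The needed Legendre data are $P_1=x$ and $P_2=(3x^2-1)/2$. The second-kind functions are
\[
Q_1=x\operatorname{arctanh}x-1,\qquad Q_2=P_2\operatorname{arctanh}x-\tfrac32x,
\]
with derivatives
\[
Q_1'=\operatorname{arctanh}x+\frac{x}{1-x^2},\qquad Q_2'=3x\operatorname{arctanh}x+\frac{P_2}{1-x^2}-\frac32 .
\]
Evaluating at $x=\rho$:
\begin{itemize}
\item $Z_0=\bigl(\tfrac1{20},\ \tfrac1\pi(\rho+\mu_{\rm d}\operatorname{arctanh}\rho)\bigr)=(0.05,\,U)$. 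This is consistent with the calibration of $U$ and is a useful sanity check.
\item $Z_1=\bigl(0.045,\ \tfrac{\rho\sqrt{\mu_{\rm d}}}{3\pi\sqrt{10}}Q_2'(\rho)\bigr)$, where $Q_2'(\rho)=3\rho\operatorname{arctanh}\rho+7$. Numerically this gives $Z_1\approx(0.045,\,0.1226)$.
\end{itemize}
Hence $5Z_1-Z_0\approx(0.175,\,0.253)$, so $\norm{5Z_1-Z_0}\approx0.308<0.5=4c$. The margin here is comfortable, and crude bounds such as $\operatorname{arctanh}\rho<1.82$ and $\pi>3.14$ suffice after bounding each coordinate separately.

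\textbf{Main obstacle.} The delicate step is the second inequality $d<a(U-c-\varepsilon)$, because of its small relative margin. I will handle it by giving explicit lower bounds for $\operatorname{arctanh}\rho$ and $\rho$ and an upper bound for $\pi$ in $U$, together with an upper bound on $d=2/(\pi\sqrt{10})$. Each bound will carry about three certified digits. The rest is routine bookkeeping.
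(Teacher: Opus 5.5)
Your plan is correct and is essentially the paper's argument: reduce each inequality to explicit expressions in $\rho$, $\pi$ and $\operatorname{arctanh}\rho$, using $Z_0=(\mu_{\rm d}/2,U)$ and $Z_1=(a\mu_{\rm d}/2,aU-d)$, which you re-derive and the paper takes from Lemma~\ref{lem:ferrers-tail}(i), and then certify them with elementary bounds. The paper's bookkeeping is slightly slicker: it writes $aU-d=\frac{\rho}{\pi}\bigl(\frac{7}{30}+\frac{9}{100}\frac{\operatorname{arctanh}\rho}{\rho}\bigr)$ and lower-bounds $\operatorname{arctanh}\rho/\rho=\sum_j a^j/(2j+1)$ by a truncated positive series, so no logarithm has to be certified; also note that $\sqrt{10}<3.1623$ only gives $\rho>0.948676$ rather than $\rho>0.94868$, which is harmless given your margins.
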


The proof is given in Appendix~\ref{app:discrete-parameter-check}.
We next establish the recurrence and monotonicity properties of the sequence $\{Z_k\}$.

\begin{lemma}
\label{lem:ferrers-tail}
For the constants $a,\mu_{\rm d},\rho$ in \eqref{eq:discrete-parameters} and $U,d$ in \eqref{eq:U-d}, the sequence $\{Z_k\}$ defined by \eqref{eq:discrete-tail} has the following properties.
\begin{enumerate}
\item[(i)] The sequence $\{Z_k\}$ satisfies the recurrence
\begin{equation}
  Z_{k+1}
  =\frac{a}{k+3}\bigl((2k+3)Z_k-kZ_{k-1}\bigr)
  \qquad \text{for }k\ge1,
  \label{eq:tail-recurrence}
\end{equation}
with initial terms
\begin{equation}
  Z_0=\left(\frac{\mu_{\rm d}}{2},U\right)
  \qquad\mbox{and}\qquad
  Z_1=\left(\frac{a\mu_{\rm d}}{2},aU-d\right).
  \label{eq:tail-first-two}
\end{equation}

\item[(ii)] For every $k\ge1$, the sequence $\{Z_k\}$ satisfies
\begin{equation}
  \ip{Z_k}{Z_k-Z_{k-1}}<0.
  \label{eq:tail-inner-product-negative}
\end{equation}
Moreover, we have
\begin{equation}
  \norm{Z_k}<\norm{Z_{k-1}},
  \qquad \text{for every }k\ge1.
  \label{eq:tail-radial-decrease}
\end{equation}

\end{enumerate}
\end{lemma}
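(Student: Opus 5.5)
Part (i) follows from the Ferrers recurrence, part (ii) from the Durand-type formula in Lemma~\ref{lem:ferrers-nicholson}; I take them in turn.

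\textbf{Recurrence.} Write $Z_k=\rho^k\sqrt{\mu_{\rm d}}\,\Phi_{k+1}(\rho)$ and set $m=k+1\ge2$ in \eqref{eq:ferrers-degree-recurrence} at $x=\rho$. Both components of $\Phi_m$ satisfy $(m+2)\Phi_{m+1}=(2m+1)\rho\Phi_m-(m-1)\Phi_{m-1}$, that is, $(k+3)\Phi_{k+2}=(2k+3)\rho\Phi_{k+1}-k\Phi_k$. Multiplying by $\rho^{k+1}\sqrt{\mu_{\rm d}}$ and using $\rho^2=a$ gives
\[
(k+3)Z_{k+1}=(2k+3)aZ_k-kaZ_{k-1},
\]
which is \eqref{eq:tail-recurrence}.

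\textbf{Initial terms.} These are explicit evaluations. From \eqref{eq:negative-order-identities}:
\begin{itemize}
\item With $P_1=x$ we get $P_1^{(-1)}(x)=\sqrt{1-x^2}/2$, so the first component of $Z_0$ is $\sqrt{\mu_{\rm d}}\sqrt{1-a}/2=\mu_{\rm d}/2$.
\item With $Q_1=\tfrac{x}{2}\log\frac{1+x}{1-x}-1$ we get $Q_1'=\operatorname{arctanh}x+x/(1-x^2)$. Since $\sqrt{\mu_{\rm d}}\sqrt{1-\rho^2}=\mu_{\rm d}$, the second component of $Z_0$ is $\frac{2}{\pi}\cdot\frac{\mu_{\rm d}}{2}\bigl(\operatorname{arctanh}\rho+\rho/\mu_{\rm d}\bigr)=U$.
\end{itemize}
For $Z_1$, use $P_2'=3x$, $Q_2=P_2\operatorname{arctanh}x-\tfrac32x$ and the factor $1/6$:
\begin{itemize}
\item The first component is $\rho\,\mu_{\rm d}\cdot 3\rho/6=a\mu_{\rm d}/2$.
\item The second component is $\rho\,\mu_{\rm d}\frac{2}{6\pi}Q_2'(\rho)$. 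Expanding $Q_2'(\rho)=3\rho\operatorname{arctanh}\rho+P_2(\rho)/\mu_{\rm d}-\tfrac32$ and simplifying should give exactly $aU-d$ with $d=2\rho/(3\pi)$.
\end{itemize}
This is a short routine check.

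\textbf{Radial decrease.} Compute
\[
\ip{Z_k}{Z_k-Z_{k-1}}=\tfrac12\bigl(\|Z_k\|^2-\|Z_{k-1}\|^2\bigr)+\tfrac12\|Z_k-Z_{k-1}\|^2,
\]
so \eqref{eq:tail-inner-product-negative} implies \eqref{eq:tail-radial-decrease}. It therefore suffices to prove the inner-product bound.

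\textbf{Inner-product bound.} This is the main obstacle. The plan is to write $Z_{k-1}$ in terms of $Z_k$ and a derivative, via the differentiation identity \eqref{eq:ferrers-differentiation} with $m=k+1$:
\[
k\,\Phi_k(\rho)=(1-a)\Phi_{k+1}'(\rho)+(k+1)\rho\,\Phi_{k+1}(\rho).
\]
Then $Z_{k-1}=\rho^{k-1}\sqrt{\mu_{\rm d}}\,\Phi_k$, and the sign of $\ip{Z_k}{Z_k-Z_{k-1}}$ equals the sign of
\[
\rho\|\Phi_{m}\|^2-\tfrac{1}{k}\ip{\Phi_m}{(1-a)\Phi_m'+m\rho\Phi_m}
=-\tfrac{\rho}{k}\|\Phi_m\|^2-\tfrac{1-a}{k}\cdot\tfrac12\bigl(\|\Phi_m\|^2\bigr)'(\rho),
\qquad m=k+1.
\]
It therefore suffices to show $\frac{\mathrm d}{\mathrm dx}\|\Phi_m\|^2\ge -\frac{2x}{1-x^2}\|\Phi_m\|^2$ at $x=\rho$, with strictness from $\|\Phi_m\|>0$. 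Equivalently, $(1-x^2)\|\Phi_m(x)\|^2$ is nondecreasing.

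By \eqref{eq:negative-order-identities},
\[
(1-x^2)\|\Phi_m\|^2=\frac{(1-x^2)^2}{m^2(m+1)^2}\Bigl(P_m'^2+\tfrac4{\pi^2}Q_m'^2\Bigr),
\]
and by Lemma~\ref{lem:ferrers-nicholson} this equals
\[
-\frac{4}{\pi^2 m(m+1)}\int_1^\infty \widehat Q_m'(w)\sqrt{(w-1)(w+1-2x^2)}\,\mathrm dw.
\]
Since $-\widehat Q_m'>0$ and $\sqrt{w+1-2x^2}$ is strictly decreasing in $x$ on $(0,1)$, this expression is strictly \emph{decreasing} in $x$. That has the wrong sign for what I wrote, so a sign slip should be checked.

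Recomputing: the inner product is negative if and only if $\rho\|\Phi_m\|^2<\frac1k\ip{\Phi_m}{\Phi_k}$, i.e.\ $k\rho\|\Phi_m\|^2<\ip{\Phi_m}{k\Phi_k}$. That is,
\[
0<(1-a)\tfrac12(\|\Phi_m\|^2)'+\rho\|\Phi_m\|^2,
\qquad\text{equivalently}\qquad
\bigl((1-x^2)^{-1}\|\Phi_m\|^2\bigr)'>0 .
\]
I would then write $(1-x^2)^{-1}\|\Phi_m\|^2$ through Lemma~\ref{lem:ferrers-nicholson} as a positive constant times
\[
(1-x^2)^{-1}\int_1^\infty(-\widehat Q_m')\sqrt{(w-1)(w+1-2x^2)}\,\mathrm dw.
\]
The remaining task is to show the $x$-derivative of this at $x=\rho$ is positive. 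The derivative is the integral of $(-\widehat Q_m')\sqrt{w-1}$ against
\[
\frac{2x\sqrt{w+1-2x^2}}{(1-x^2)^2}-\frac{x}{(1-x^2)\sqrt{w+1-2x^2}}
=\frac{x\,(2w+1-3x^2)}{(1-x^2)^2\sqrt{w+1-2x^2}},
\]
which is positive for $w\ge1$ and $x\in(0,1)$. Absolute convergence from Lemma~\ref{lem:ferrers-nicholson} justifies differentiating under the integral sign, which gives strict negativity for every $k\ge1$.
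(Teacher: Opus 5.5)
Your proposal is correct and follows the paper's proof essentially step for step. Part (i) comes from the Ferrers degree recurrence and explicit evaluation. In part (ii), the differentiation identity reduces the sign to $\frac{\mathrm d}{\mathrm dx}\bigl[(1-x^2)^{-1}\|\Phi_m\|^2\bigr]=M_m'(x)/(m^2(m+1)^2)>0$, which the paper also proves from Durand's formula, using a kernel increasing in $x$ and differentiation under the integral sign.

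One slip to fix: Lemma~\ref{lem:ferrers-nicholson} gives the prefactor $(1-x^2)^{-2}$, not $(1-x^2)^{-1}$. The correct $x$-derivative of the kernel $(1-x^2)^{-2}\sqrt{w+1-2x^2}$ is $\frac{2x(2w+1-3x^2)}{(1-x^2)^3\sqrt{w+1-2x^2}}$, which is your displayed expression times $2/(1-x^2)$, so the positivity conclusion is unaffected.
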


\begin{proof}
For (i), each coordinate of $\Phi_m$ satisfies the degree recurrence \eqref{eq:ferrers-degree-recurrence} in Lemma~\ref{lem:ferrers-identities}.
With $m=k+1$ and $x=\rho$, this recurrence reads, for $k\ge1$,
\begin{equation*}
    (k+3)\Phi_{k+2}(\rho)
  =(2k+3)\rho\Phi_{k+1}(\rho)-k\Phi_k(\rho).
\end{equation*}
Multiplying both sides of this recurrence by $\rho^{k+1}\sqrt{\mu_{\rm d}}$ and using \eqref{eq:discrete-tail}, we obtain
\begin{align*}
  (k+3)Z_{k+1}
  &=\rho^{k+1}\sqrt{\mu_{\rm d}}
    \bigl((2k+3)\rho\Phi_{k+1}(\rho)-k\Phi_k(\rho)\bigr)\\
  &=\rho^2\bigl((2k+3)Z_k-kZ_{k-1}\bigr).
\end{align*}
Since $\rho^2=a$, this proves \eqref{eq:tail-recurrence}.
Differentiating Rodrigues' formula \eqref{eq:legendre-rodrigues} for $P_1,P_2$ and \eqref{eq:second-kind-Q-definition} for $Q_1,Q_2$, we obtain, for $-1<x<1$,
\begin{align*}
  P_1'(x)&=1,
  &Q_1'(x)&=\operatorname{arctanh}x+\frac{x}{1-x^2},
  \\
  P_2'(x)&=3x,
  &\mbox{and}\quad Q_2'(x)&=3xQ_1'(x)-\frac{2}{1-x^2}. 
\end{align*}
Using these derivative formulas and \eqref{eq:negative-order-identities}, with $1-\rho^2=\mu_{\rm d}$, we evaluate the first two terms of \eqref{eq:discrete-tail}.
For $m=1$, we obtain
\begin{align*}
  Z_0&=\sqrt{\mu_{\rm d}}
       \left(P_1^{(-1)}(\rho),\frac2\pi Q_1^{(-1)}(\rho)\right)
       =\left(\frac{\mu_{\rm d}}{2}P_1'(\rho),
             \frac{\mu_{\rm d}}{\pi}Q_1'(\rho)\right)=\left(\frac{\mu_{\rm d}}{2},
             \frac{\rho+\mu_{\rm d}\operatorname{arctanh}\rho}{\pi}\right)
      =\left(\frac{\mu_{\rm d}}{2},U\right).
\end{align*}
Here the last equality uses \eqref{eq:U-d}.
For $m=2$, using $\rho^2=a$, we similarly obtain
\begin{align*}
  Z_1&=\rho\sqrt{\mu_{\rm d}}
       \left(P_2^{(-1)}(\rho),\frac2\pi Q_2^{(-1)}(\rho)\right)=\left(\frac{\rho\mu_{\rm d}}{6}P_2'(\rho),
             \frac{\rho\mu_{\rm d}}{3\pi}Q_2'(\rho)\right)\\
     &=\left(\frac{a\mu_{\rm d}}{2},
             \frac{a\mu_{\rm d}}{\pi}Q_1'(\rho)-\frac{2\rho}{3\pi}\right)
     =\left(\frac{a\mu_{\rm d}}{2},aU-d\right).
\end{align*}
The last equality uses $\mu_{\rm d}Q_1'(\rho)/\pi=U$ and $2\rho/(3\pi)=d$.
This proves \eqref{eq:tail-first-two} and completes (i).

For (ii), fix $k\ge1$.
Substituting the formulas for $Z_k$ and $Z_{k-1}$ from \eqref{eq:discrete-tail} into their difference and applying the differentiation identity \eqref{eq:ferrers-differentiation} in Lemma~\ref{lem:ferrers-identities} with $m=k+1$ and $x=\rho$, we obtain
\[
  Z_k-Z_{k-1}
  =-\frac{\rho^{k-1}\sqrt{\mu_{\rm d}}}{k}
    \left(\rho\Phi_{k+1}(\rho)+\mu_{\rm d}\Phi_{k+1}'(\rho)\right).
\]
For integers $m\ge2$, define $M_m(x):=P_m'(x)^2+(4/\pi^2)Q_m'(x)^2$ on $(0,1)$.
Taking the squared norm in \eqref{eq:Phi-definition} and substituting the order-$-1$ identities \eqref{eq:negative-order-identities}, we obtain
\[
  \norm{\Phi_m(x)}^2
  =\frac{1-x^2}{m^2(m+1)^2}M_m(x).
\]
Differentiating the expression for $\norm{\Phi_m(x)}^2$ and rearranging, we have
\[
  x\norm{\Phi_m(x)}^2
  +(1-x^2)\ip{\Phi_m(x)}{\Phi_m'(x)}
  =\frac{(1-x^2)^2}{2m^2(m+1)^2}M_m'(x).
\]
Taking the inner product of $Z_k=\rho^k\sqrt{\mu_{\rm d}}\Phi_{k+1}(\rho)$ with the expression for $Z_k-Z_{k-1}$, and substituting the derivative identity for $\norm{\Phi_m}^2$ with $m=k+1$ and $x=\rho$, we obtain
\begin{equation}
  \ip{Z_k}{Z_k-Z_{k-1}}
  =
  -\frac{\rho^{2k-1}(1-\rho^2)^3}
         {2k(k+1)^2(k+2)^2}
  M_{k+1}'(\rho).
  \label{eq:tail-inner-product-identity}
\end{equation}

It remains to prove that $M_m'(x)>0$ for every $m\ge2$ and $0<x<1$.
We establish this inequality using the Nicholson-type integral representation.
Define $B_m(w) := -\sqrt{w^2 - 1}\,\widehat{Q}_m'(w)$ for $w > 1$.
By Lemma~\ref{lem:ferrers-nicholson}, we have $B_m(w) > 0$ for all $w > 1$.
From \eqref{eq:ferrers-derivative-nicholson} in Lemma~\ref{lem:ferrers-nicholson}, we obtain
\[
  M_m(x)=\frac{4m(m+1)}{\pi^2}
    \int_1^{+\infty} B_m(w)K(x,w)\dd w
  \quad \mbox{with} \quad
  K(x,w):=(1-x^2)^{-2}\sqrt{1-\frac{2x^2}{w+1}}.
\]
The dependence of the integrand on $x$ is entirely through $K$, whose logarithmic derivative satisfies, for $0<x<1$ and $w>1$,
\[
  \partial_x\log K(x,w)
  =\frac{2x(2w+1-3x^2)}
         {(1-x^2)(w+1-2x^2)}>0.
\] 
Since $K(x,w) \ge (1 - x^2)^{-3/2}$, we deduce from the absolute convergence in Lemma~\ref{lem:ferrers-nicholson} that $B_m \in L^1(1, +\infty)$.
For every compact $E\subset(0,1)$, the functions $K$ and $\partial_xK$ are uniformly bounded on $E\times[1,+\infty)$.
Therefore, by the dominated convergence theorem, we obtain
\[
  M_m'(x)=\frac{4m(m+1)}{\pi^2}
    \int_1^{+\infty} B_m(w)\partial_xK(x,w)\dd w.
\]
Since $B_m>0$ and $\partial_xK(x,w)>0$, it follows that $M_m'(x)>0$.
Since $M_{k+1}'(\rho)>0$ and the coefficient in \eqref{eq:tail-inner-product-identity} is negative, we obtain \eqref{eq:tail-inner-product-negative}.
Moreover,
\begin{equation*}
  \norm{Z_k}^2-\norm{Z_{k-1}}^2
  =2\ip{Z_k}{Z_k-Z_{k-1}}
   -\norm{Z_k-Z_{k-1}}^2<0,
\end{equation*}
which proves \eqref{eq:tail-radial-decrease} and completes (ii).
\end{proof}

We now show that the prescribed initial point generates the sequence $\{x_{k+1}\}$ which coincides with $\{Z_k\}$.

\begin{lemma}
\label{lem:discrete-orbit}
Let $\{(x_k,y_k)\}$ be generated by the NAG iteration \eqref{eq:discrete-nag} with $f=f_{\rm d}$, $s=1$, and the initialization \eqref{eq:discrete-initial-point}.
Then the iterates $\{x_k\}$ and $\{y_k\}$ satisfy
\begin{equation*}
  x_1=y_1=Z_0
  \qquad\mbox{and}\qquad
  x_2=Z_1,
\end{equation*}
and, for every $k\ge2$,
\begin{equation}
  x_{k+1}=Z_k,
  \qquad
  y_k=\frac{1}{a}Z_k,
  \qquad\mbox{and}\qquad
  \norm{y_k}\le\norm{y_2}<c.
  \label{eq:discrete-orbit-identification}
\end{equation}
\end{lemma}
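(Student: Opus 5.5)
The plan is a direct computation of the first two steps, followed by an induction that uses the recurrence of Lemma~\ref{lem:ferrers-tail}(i) together with the radial decrease \eqref{eq:tail-radial-decrease}.

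\textbf{First two steps.} Write $x_0=y_0=(\mu_{\rm d}/(2a),(U+d)/a)$.

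1. Since $a<1$ and $d>0$, the second coordinate satisfies $(U+d)/a>U$. Hence $h_{\rm d}(v_0)=-d$, and \eqref{eq:discrete-gradient-map} gives
\[
x_1=\mathcal T(y_0)=(\mu_{\rm d}/2,\,U+d-d)=(\mu_{\rm d}/2,U)=Z_0
\]
by \eqref{eq:tail-first-two}.
2. For $k=0$ the momentum coefficient is zero, so $y_1=x_1=Z_0$.
3. The second coordinate of $y_1$ equals $U$, so $h_{\rm d}(U)=-d$. This uses that $h_{\rm d}$ is constant on $[U,\infty)$: $\sigma$ vanishes for $t\ge U$, so the integral has already reached $-b_{\rm d}(U-c-\varepsilon)=-d$.
4. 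Therefore $x_2=\mathcal T(y_1)=(a\mu_{\rm d}/2,\,aU-d)=Z_1$.
5. Next, $y_2=x_2+\tfrac14(x_2-x_1)=\tfrac14(5Z_1-Z_0)$. Lemma~\ref{lem:discrete-parameters} gives $\norm{y_2}<c$.
6. Checking the ansatz at $k=2$: the recurrence \eqref{eq:tail-recurrence} with $k=1$ gives $Z_2=\tfrac a4(5Z_1-Z_0)=a y_2$. So $y_2=Z_2/a$, which is the base case of \eqref{eq:discrete-orbit-identification} for the $y$-identity.

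\textbf{Induction.} Assume that for some $k\ge2$ we have $x_k=Z_{k-1}$, $y_k=Z_k/a$, and $\norm{y_k}<c$.

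1. The second coordinate of $y_k$ is at most $\norm{y_k}<c$. So $h_{\rm d}$ vanishes there, $\mathcal T$ acts as multiplication by $a$, and $x_{k+1}=a y_k=Z_k$.
2. Then
\[
y_{k+1}=Z_k+\tfrac{k}{k+3}(Z_k-Z_{k-1})=\tfrac{1}{k+3}\bigl((2k+3)Z_k-kZ_{k-1}\bigr)=Z_{k+1}/a
\]
by \eqref{eq:tail-recurrence}.
3. By \eqref{eq:tail-radial-decrease}, $\norm{y_{k+1}}=\norm{Z_{k+1}}/a<\norm{Z_k}/a=\norm{y_k}\le\norm{y_2}<c$. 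This closes the induction and gives the chain $\norm{y_k}\le\norm{y_2}$.

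\textbf{Main obstacle.} The only delicate point is the base case: checking that $y_0$ and $y_1$ lie in the constant region $v\ge U$ of $h_{\rm d}$, and that $y_2$ lands in the quadratic region. The latter is precisely the numerical bound $\norm{5Z_1-Z_0}<4c$, which Lemma~\ref{lem:discrete-parameters} supplies. Everything else is bookkeeping from Lemma~\ref{lem:ferrers-tail}.
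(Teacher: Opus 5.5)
Your proposal is correct and follows the paper's proof: the same direct computation of $x_1,y_1,x_2,y_2$ using $h_{\rm d}=-d$ on $[U,\infty)$ and the bound $\norm{5Z_1-Z_0}<4c$, followed by an induction driven by the recurrence \eqref{eq:tail-recurrence} and the radial decrease \eqref{eq:tail-radial-decrease}. The only differences are cosmetic: you carry $(x_k,y_k)$ in the induction hypothesis where the paper carries $(x_{k-1},x_k)$, and your final chain uses $\norm{y_k}\le\norm{y_2}$, which you should either add to the hypothesis or derive directly from $\norm{Z_k}\le\norm{Z_2}$.
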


\begin{proof}
Since $\sigma(t) = 0$ for $t \notin (c, U)$ and $\int_{\mathbb{R}} \sigma(t)\,\mathrm{d}t = U - c - \varepsilon$, we have from the definition of $b_{\mathrm{d}}$ in \eqref{eq:discrete-h} that
\begin{equation}
  h_{\rm d}(t)=0 \quad\text{for }t\le c
  \qquad\mbox{and}\qquad
  h_{\rm d}(t)=-d \quad\text{for }t\ge U.
  \label{eq:hd-properties}
\end{equation}
Since $c>0$ and $h_{\rm d}$ vanishes on $(-\infty,c]$, the integral in \eqref{eq:discrete-objective} vanishes whenever $v\le c$.
Hence
\begin{equation}
  f_{\rm d}(\xi,v)=\frac{\mu_{\rm d}}{2}(\xi^2+v^2)
  \qquad \text{for }(\xi,v)\in\R^2\text{ with }v\le c.
  \label{eq:discrete-quadratic-half-space}
\end{equation}

The initial point in \eqref{eq:discrete-initial-point} satisfies $(x_0)_2=(U+d)/a>U$, so $h_{\rm d}((x_0)_2)=-d$ by \eqref{eq:hd-properties}.
Substituting $x_0$ into the gradient-step map \eqref{eq:discrete-gradient-map}, we obtain $x_1=Z_0$, with $Z_0$ as evaluated in Lemma~\ref{lem:ferrers-tail}(i).
The first momentum coefficient in \eqref{eq:discrete-nag} is zero, so $y_1=x_1=Z_0$.
Applying the gradient-step map \eqref{eq:discrete-gradient-map} to $y_1$, whose second coordinate equals $U$, we obtain $x_2 = Z_1$ since $h_{\mathrm{d}}(U) = -d$.
Hence
\begin{align*}
  x_1=y_1=Z_0,\qquad x_2=aZ_0-(0,d)=Z_1,\quad
  \mbox{and}\quad y_2=x_2+\frac14(x_2-x_1)=\frac{5Z_1-Z_0}{4}.
\end{align*}
By the recurrence~\eqref{eq:tail-recurrence} with $k = 1$, we have $y_2 = (5Z_1 - Z_0)/4 = Z_2/a$.
With the estimate~\eqref{eq:discrete-parameter-bounds} in Lemma~\ref{lem:discrete-parameters}, we obtain $\norm{y_2}<c$, so $h_{\mathrm{d}}((y_2)_2) = 0$ and $x_3 = ay_2 = Z_2$.
This establishes~\eqref{eq:discrete-orbit-identification} for $k = 2$.

To continue inductively, suppose that $k\ge3$, $x_k=Z_{k-1}$, and $x_{k-1}=Z_{k-2}$.
Combining the momentum update in \eqref{eq:discrete-nag} with the recurrence \eqref{eq:tail-recurrence} from Lemma~\ref{lem:ferrers-tail}(i), we obtain
\begin{align*}
  y_k
  =x_k+\frac{k-1}{k+2}(x_k-x_{k-1})
  =\frac{(2k+1)Z_{k-1}-(k-1)Z_{k-2}}{k+2}
    =\frac{Z_k}{a}.
\end{align*}
Since $\norm{Z_k}$ decreases by Lemma~\ref{lem:ferrers-tail}(ii), the extrapolated point $y_k=Z_k/a$ satisfies
\begin{equation*}
  \norm{y_k}
  =\frac{1}{a}\norm{Z_k}
  \le\frac{1}{a}\norm{Z_2}
  =\norm{y_2}
  <c.
\end{equation*}
Thus $(y_k)_2<c$, so $h_{\rm d}((y_k)_2)=0$, and therefore $x_{k+1}=ay_k=Z_k$.
This completes the induction for \eqref{eq:discrete-orbit-identification}.

\end{proof}

To complete the proof of Theorem~\ref{thm:discrete-counterexample}, it remains to prove $f_{\rm d}\in\mathcal S_{\mu,L}(\R^2)$ for constants $0<\mu\le L<1=1/s$, and verify the non-activation property \eqref{eq:discrete-main-nonrestart}.

\begin{proof}[Proof of Theorem~\ref{thm:discrete-counterexample}]
Take $f:=f_{\rm d}$, $s:=1$, and the initial point \eqref{eq:discrete-initial-point}.
Since $\sigma$ is continuous, $h_{\rm d}\in C^1(\R)$ with $h_{\rm d}'=-b_{\rm d}\sigma$, and hence $f\in C^2(\R^2)$.
Differentiating the objective $f_{\rm d}$ in \eqref{eq:discrete-objective}, we obtain the Hessian
\[
  \nabla^2 f(\xi,v)
  =\operatorname{diag}\bigl(\mu_{\rm d},\mu_{\rm d}+b_{\rm d}\sigma(v)\bigr).
\]
By Lemma~\ref{lem:discrete-parameters}, we have $0<d<a(U-c-\varepsilon)$, so the constant $b_{\rm d}$ in \eqref{eq:discrete-h} satisfies $0<b_{\rm d}<a$.
Set $L_{\rm d}:=\mu_{\rm d}+b_{\rm d}<\mu_{\rm d}+a=1=1/s$.
Since $0\le\sigma\le1$, both diagonal entries of the Hessian lie between $\mu_{\rm d}$ and $L_{\rm d}$, that is,
\begin{equation}
  \mu_{\rm d} I\preceq\nabla^2 f(\xi,v)\preceq L_{\rm d}I
  \qquad \text{for all } (\xi,v)\in\R^2.
  \label{eq:discrete-Hessian-bound}
\end{equation}
Thus $f\in\mathcal S_{\mu,L}(\R^2)$ and $s=1<1/L$, with $\mu:=\mu_{\rm d}$ and $L:=L_{\rm d}$.
Since $h_{\rm d}(0)=0$, we have $\nabla f(0)=0$, and by strong convexity we conclude that $x^\star=0$ is the unique minimizer.
The first coordinate of $x_0$ is $\mu_{\rm d}/(2a)>0$, so the initialization is nonoptimal.

Finally, it suffices to verify \eqref{eq:discrete-main-nonrestart}, i.e., $D_k<0$ for all $k\ge0$.
Since $y_0=x_0$ and $y_1=x_1$, we obtain from the definition \eqref{eq:discrete-gr} that
\begin{equation*}
  D_0=-\norm{\nabla f_{\rm d}(y_0)}^2<0
  \qquad\mbox{and}\qquad
  D_1=-\norm{\nabla f_{\rm d}(y_1)}^2<0,
\end{equation*}
where the inequalities are strict because $(\nabla f_{\rm d}(y_0))_1=\mu_{\rm d}^2/(2a)>0$ and $(\nabla f_{\rm d}(y_1))_1=\mu_{\rm d}^2/2>0$, using the initial point \eqref{eq:discrete-initial-point} and the formula for $Z_0$ in Lemma~\ref{lem:ferrers-tail}(i).

For $k \ge 2$, it follows from Lemma~\ref{lem:discrete-orbit}, specifically \eqref{eq:discrete-orbit-identification}, that $x_{k+1}=Z_k$ and $y_k=Z_k/a$, with $y_k$ in the half-space $v\le c$.
On this half-space, $f_{\rm d}$ has the quadratic form \eqref{eq:discrete-quadratic-half-space}, and therefore
\begin{equation*}
  \nabla f_{\rm d}(y_k)=\mu_{\rm d} y_k=\frac{\mu_{\rm d}}{a}Z_k
  \qquad\mbox{and}\qquad
  x_{k+1}-x_k=Z_k-Z_{k-1}.
\end{equation*}
The restart quantity in \eqref{eq:discrete-gr} is thus a positive multiple of the inner product in Lemma~\ref{lem:ferrers-tail}(ii), namely,
\begin{equation*}
  D_k
  =\frac{\mu_{\rm d}}{a}\ip{Z_k}{Z_k-Z_{k-1}}
  <0,
\end{equation*}
which completes the proof.

\end{proof}

\begin{remark}
From the asymptotic expansions \eqref{eq:ferrers-P-large} and \eqref{eq:ferrers-Q-large} in Lemma~\ref{lem:ferrers-asymptotics}, we have $\|\Phi_{k+1}(\rho)\| = O(k^{-3/2})$, and thus $\|Z_k\| = O(\rho^k k^{-3/2})$ by \eqref{eq:discrete-tail}.
Applying the orbit identification \eqref{eq:discrete-orbit-identification} in Lemma~\ref{lem:discrete-orbit}, we obtain that both $\|x_{k+1}\|^2$ and $\|y_k\|^2$ decay at the asymptotic rate $O(\rho^{2k}k^{-3})$.
Since $0<\rho<1$ and no restart occurs, both the restarted and unrestarted iterates converge $R$-linearly to zero.
\end{remark}

\begin{remark}
\label{rem:discrete-extensions}
The discrete counterexample admits the following two extensions.

\begin{enumerate}
\item[(i)] The construction can be modified to obtain a counterexample with step size $s=1/L$.
Define $F:\R^3\to\R$ by
\begin{equation*}
  F(\xi,v,w)
  :=f_{\rm d}(\xi,v)+\frac12w^2.
\end{equation*}
From the Hessian bound \eqref{eq:discrete-Hessian-bound} in the proof of Theorem~\ref{thm:discrete-counterexample}, we have
\begin{equation*}
  \mu_{\rm d} I_3\preceq\nabla^2F\preceq I_3,
\end{equation*}
and the exact global Lipschitz constant of $\nabla F$ is $L=1$.
For the initialization $\widetilde x_0:=\widetilde y_0:=(x_0,0)$ and the step size $s=1$, the third coordinate remains zero, so the first two coordinates reproduce the two-dimensional counterexample.
Hence $D_k<0$ for every $k\ge0$ also when $s=1/L$.

\item[(ii)] The objective $f_{\rm d}$ in \eqref{eq:discrete-objective} can be chosen in $C^\infty$ without changing its NAG iterates.
Replace the piecewise-linear function $\sigma$ in \eqref{eq:sigma-definition} by a function in $C_c^{\infty}((c,U))$ satisfying $0\le\sigma\le1$ and $\int_c^U\sigma(t)\dd t>d/a$, and set $b_{\rm d}:=d/\int_c^U\sigma(t)\dd t$.
By Lemma~\ref{lem:discrete-parameters}, we have $U - c > d/a$, so such a smooth function $\sigma$ exists and the resulting constant $b_{\mathrm{d}}$ satisfies $b_{\mathrm{d}} < a$.
Using the smooth function $\sigma$ and the constant $b_{\rm d}$ chosen above in \eqref{eq:discrete-h} and \eqref{eq:discrete-objective}, we obtain infinitely differentiable functions $h_{\rm d}$ and $f_{\rm d}$.
Because $h_{\rm d}$ still vanishes on $(-\infty,c]$ and equals $-d$ on $[U,+\infty)$, the proof of Lemma~\ref{lem:discrete-orbit} applies with the same gradient updates, so the iterates $\{(x_k,y_k)\}$ and restart quantities $D_k$ are unchanged.
\end{enumerate}
\end{remark}

\section{Proof of Theorem \ref{thm:continuous-counterexample} and Corollary~\ref{coro:continuous-counterexample}}
\label{sec:continuous}

In this section, we establish Theorem~\ref{thm:continuous-counterexample} and Corollary~\ref{coro:continuous-counterexample} by constructing the continuous-time counterexample in Subsection~\ref{subsec:continuous-construction} and verifying its properties in Subsection~\ref{subsec:continuous-prove}.
In parallel with the discrete construction, we begin with a two-dimensional solution of the isotropic quadratic NAG-ODE, formed by scaled Bessel functions of the first and second kinds.
Because the second-kind component diverges at $t = 0$, we replace it on a short initial interval $[0, T]$ by a smooth curve with zero initial velocity.
To make this modified trajectory solve the NAG-ODE for a single objective in $\mathcal{S}_{\mu, L}(\R^2)$, we prescribe the second gradient component along the initial segment and extend it smoothly to $\R$ with a positive, bounded derivative.

\subsection{Construction of the continuous counterexample}
\label{subsec:continuous-construction}
For the isotropic quadratic objective $(x, y) \mapsto (x^2 + y^2)/2$, each coordinate of the NAG-ODE~\eqref{eq:nag-ode} satisfies the scalar equation
\begin{equation}
\label{eq:scalar-bessel-equation}
  q''(t) + \frac{3}{t}q'(t) + q(t) = 0, \qquad \text{for } t > 0.
\end{equation}
As noted in Subsection~\ref{subsec:bessel-preliminaries}, the substitution $w(t) = tq(t)$ reduces~\eqref{eq:scalar-bessel-equation} to Bessel's equation~\eqref{eq:bessel-equation}, so the scaled Bessel functions
\begin{equation}
\label{eq:bessel-branches}
  u(t) := \frac{J_1(t)}{t}, \qquad \eta(t) := -\frac{Y_1(t)}{t}, \qquad \text{for } t > 0,
\end{equation}
form a fundamental system of solutions.
We pair $u$ with $\eta$ because, by Nicholson's formula \eqref{eq:bessel-nicholson} in Lemma~\ref{lem:bessel-nicholson}, we have $u(t)u'(t) + \eta(t)\eta'(t) < 0$ for all $t > 0$.
By the series expansions~\eqref{eq:Jn-ascending}--\eqref{eq:Yn-ascending} in Lemma~\ref{lem:bessel-ascending}, $u$ extends smoothly to $t = 0$ with $u(0) = 1/2$ and $u'(0) = 0$, whereas $\eta(t)$ diverges as $t \downarrow 0$.

To obtain a valid trajectory starting from rest, we fix $T:= 1/10$, retain $u$ on $[0, \infty)$, and replace $\eta$ on $[0, T]$ by a smooth connecting function $p$ that satisfies $p'(0) = 0$ and agrees with $\eta$ to all orders at $T$.
Let $h(t)$ denote the prescribed second gradient component along $(u(t), p(t))$ on $[0, T]$, which is coupled with $p$ through the ODE $\frac{\dd }{\dd t} (t^3p')+t^3 h=0$ on $(0, T]$.
To decouple this relation, we introduce an auxiliary function $g:= -h'$, from which both $h$ and $p$ can be recovered by successive integration.
To meet these requirements, we enforce zero initial velocity, smooth matching at $T$, and the required restart sign through endpoint, positivity, and weighted moment conditions on $g$.
We fulfill these conditions by smoothly interpolating between $2t$ and $-\eta'(t)$ and adding a localized moment correction.

For this interpolation, define the smooth transition function $\theta_T:\R\to[0,1]$ by
\begin{equation}
  \theta_T(t):=\vartheta(2t/T-1)\quad \text{with}\quad\vartheta(r):=
  \frac{\chi(r)}{\chi(r)+\chi(1-r)}, \quad \text{and}\quad\chi(r):=
  \begin{cases}
    0,&r\le0,\\
    \exp(-1/r),&r>0.
  \end{cases}
  \label{eq:smooth-step}
\end{equation}
Using this transition, $g_{\rm b}:[0,T]\to\R$ is defined to smoothly connect $2t$ and $-\eta'(t)$, namely,
\begin{equation}
  g_{\rm b}(t):=
  \begin{cases}
    2t,&0\le t\le T/2,
    \\[1mm]
    (1-\theta_T(t))\,2t-\theta_T(t)\eta'(t),
      &T/2<t\le T.
  \end{cases}
  \label{eq:continuous-baseline}
\end{equation}
Since the terminal condition $p'(T) = \eta'(T)$ requires a prescribed weighted moment of $g$, we define the constants $I_T$, $R_T$, and $N_T$ and the function $\psi_T$ by
\begin{align}
  &I_T:=\int_0^T s^4\theta_T'(s)\dd s,
  &&R_T:=-4T^3\eta'(T)-T^4\eta(T),
  \label{eq:IT-RT}
  \\
   &N_T:=R_T-\int_0^T s^4g_{\rm b}(s)\dd s &\mbox{and}\quad &\psi_T(t):=\frac{\theta_T'(t)}{I_T}.
  \label{eq:NT-definition}
\end{align}
Since $\theta_T'=0$ on $(0,T/2]$ and $\theta_T'>0$ on $(T/2,T)$, we have $I_T>0$ and $\int_0^T s^4\psi_T(s)\dd s=1$.

Define $g:[0,T]\to\R$ as
\begin{equation}
\label{eq:g-definition}
  g(t):=g_{\rm b}(t)+N_T\psi_T(t)
\end{equation}
Then $g$ will satisfy the initial condition, endpoint matching condition, and the moment condition $\int_0^T s^4g(s)\dd s=R_T$ needed for constructing the connecting function $p$ and the prescribed second gradient component $h$.
After that, $h,p:[0,T]\to\R$ are defined by
\begin{align}
  h(t)&:=\eta(T)+\int_t^Tg(s)\dd s
  \label{eq:h-definition}
  \\
  \mbox{and}\quad p(t)&:=\eta(T)+\int_t^T r^{-3}
  \left(\int_0^r s^3h(s)\dd s\right)\dd r,
  \label{eq:p-definition}
\end{align}
such that they satisfy $h'=-g$ and $p''+3p'/t+h=0$ on $(0,T]$, with $h(T)=p(T)=\eta(T)$ and $p'(0)=0$.
Here we write $h_0 := h(0)$ and $p_0 := \lim_{t\downarrow 0} p(t)$, where the convergence of the improper integral as $t \downarrow 0$ will be verified in Lemma \ref{lem:h-p}(ii).

Let $\gamma$ denote the second gradient component of the objective function.
Then it needs to satisfy the relations $\gamma(p(t))=h(t)$ for $t\in[0,T]$ and $\gamma(\eta(t))=\eta(t)$ for $t>T$.
As we prove in Lemma~\ref{lem:h-p}(i), $p$ is strictly decreasing on $[0,T]$, so the relation $\gamma(p(t))=h(t)$ determines $\gamma=h\circ p^{-1}$ on $[\eta(T),p_0]$.
For $q\le\eta(T)$, we require $\gamma(q)=q$ so that the second gradient component agrees with that of the quadratic objective when $t\ge T$.
Above $p_0$, we need a smooth extension whose derivative has a positive lower bound and a finite upper bound.
Define the transition function $\zeta:\R\to[0,1]$ and the function $a_\gamma:[p_0,+\infty)\to(0,+\infty)$ by
\begin{equation}
  \zeta(q):=\vartheta(3(q-p_0)-1)
  \qquad\mbox{and}\qquad
  a_\gamma(q):=
  \frac{24(1-\zeta(q))}{\sqrt{9h_0^2+96(q-p_0)}}+\zeta(q).
  \label{eq:zeta-definition}
\end{equation}
The function $a_\gamma(q)$ is designed to match the derivative of a smooth local extension of $h\circ p^{-1}$ near $p_0$, while it equals $1$ for $q\ge p_0+2/3$.
Using the function $a_\gamma$ in \eqref{eq:zeta-definition} to extend $\gamma$ to $q\ge p_0$, define $\gamma:\R\to\R$ by
\begin{equation}
  \gamma(q):=
  \begin{cases}
    q,&q\le\eta(T), \\[1mm]
    h(p^{-1}(q)),&\eta(T)<q<p_0, \\[1mm]
    h_0+\displaystyle\int_{p_0}^q a_\gamma(r)\dd r,&q\ge p_0.
  \end{cases}
  \label{eq:gamma-definition}
\end{equation}
Integrating $\gamma$, we define the objective $f_{\rm c}:\R^2\to\R$ and its initial point by
\begin{equation}
\label{eq:continuous-objective}
    \begin{aligned}
        f_{\rm c}(x,y)&:=\frac12x^2+\int_0^y\gamma(q)\dd q
  \\
  \mbox{and}\quad x_0&:=\left(\frac12,p_0\right).
    \end{aligned}
\end{equation}

Before proceeding to the verification of the counterexample in the next subsection, we illustrate in Figure~\ref{fig:continuous-counterexample} the restart quantity and the objective gap along the constructed trajectory.

\begin{figure}[!ht]
  \centering
  \includegraphics[width=0.8\textwidth]{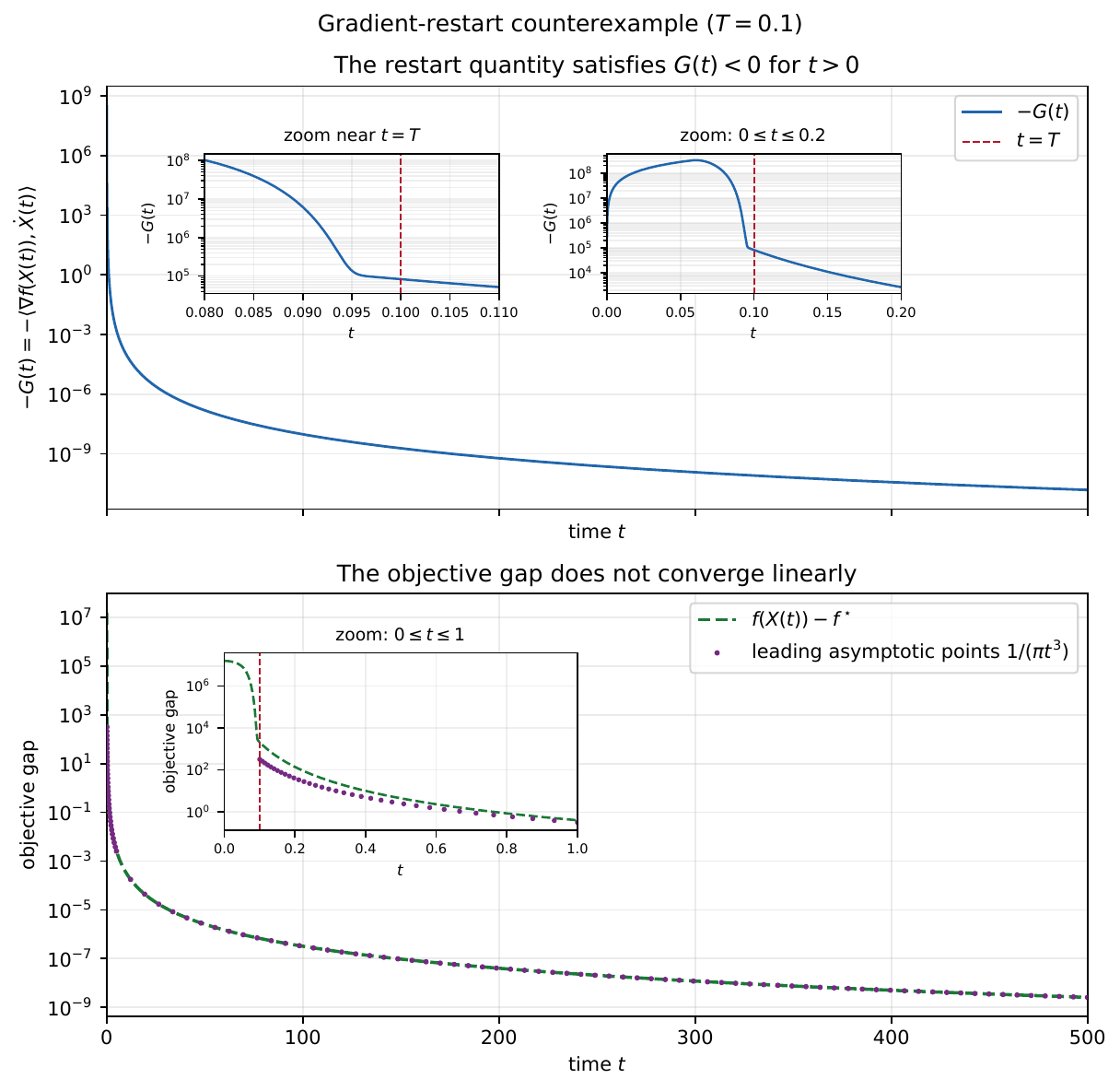}
\caption{Numerical evaluation of the continuous construction formulas with $T=0.1$.
The upper panel plots $-G(t)>0$, where $G$ is the restart quantity in \eqref{eq:continuous-gr}, and the lower panel compares the objective gap with its leading asymptotic $1/(\pi t^3)$.}
  \label{fig:continuous-counterexample}
\end{figure}

\subsection{Verification of the continuous counterexample}
\label{subsec:continuous-prove}

In this subsection, we prove Theorem~\ref{thm:continuous-counterexample} and Corollary~\ref{coro:continuous-counterexample} for the objective $f_{\mathrm{c}}$ and initial point $x_0$ in \eqref{eq:continuous-objective}.
To describe the trajectory generated by the NAG-ODE \eqref{eq:nag-ode}, we define the concatenated curve $X_{\mathrm{c}} : [0, +\infty) \to \mathbb{R}^2$ by
\begin{equation}
\label{eq:continuous-trajectory}
X_{\mathrm{c}}(t) := \begin{cases}
(u(t), p(t)), & 0 \le t < T, \\
(u(t), \eta(t)), & t \ge T.
\end{cases}
\end{equation}
In Lemmas~\ref{lem:quantitative-bessel}--\ref{lem:gamma} below, we successively establish the quantitative properties of $(u, \eta)$ and $(h, p)$, prove that $\gamma \in C^\infty(\mathbb{R})$, and show that $X_{\mathrm{c}}$ solves the NAG-ODE \eqref{eq:nag-ode} for $f_{\mathrm{c}}$ with $X_{\mathrm{c}}(0) = x_0$ and $\dot{X}_{\mathrm{c}}(0) = 0$.
We then use these results to verify that $f_{\mathrm{c}} \in C^\infty(\mathbb{R}^2) \cap \mathcal{S}_{\mu,L}(\mathbb{R}^2)$, prove that $G(t) < 0$ for all $t > 0$, and determine the asymptotic objective gap.

We first record several quantitative bounds on the scaled Bessel functions $u$ and $\eta$.
\begin{lemma}
\label{lem:quantitative-bessel}
Let $T:=1/10$.
The scaled Bessel functions $u$ and $\eta$ defined in \eqref{eq:bessel-branches} satisfy
\begin{align}
  &0<u(t)<\frac12,\quad u'(t)<0,\quad
   \eta(t)>0,\quad\mbox{and}\quad -\frac2{t^3}<\eta'(t)<0
  &&\quad\text{for }0<t\le T,
  \label{eq:small-time-signs}
  \\
  &\mbox{and}\quad\eta(t)\le\eta(T)
  &&\quad\text{for }t\ge T.
  \label{eq:tail-height-bound}
\end{align}
Moreover, the constant $N_T$ defined in \eqref{eq:NT-definition} is positive and
\begin{equation}
  64<\eta(T)<65
  \qquad\mbox{and}\qquad
  1276<-\eta'(T)<1277.
  \label{eq:matching-time-bessel-bounds}
\end{equation}
\end{lemma}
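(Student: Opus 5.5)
The plan is to read every claim off the ascending series of Lemma~\ref{lem:bessel-ascending} at small arguments, with explicit truncation errors. The only exception is the tail bound \eqref{eq:tail-height-bound}, for which we use Nicholson's formula \eqref{eq:bessel-nicholson}.

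\textbf{Signs and bounds on $(0,T]$.} By \eqref{eq:Jn-ascending}, $u(t)=J_1(t)/t=\tfrac12-\tfrac{t^2}{16}+\cdots$ is an alternating series. For $t\le T$ its terms decrease in absolute value, so $0<u(t)<\tfrac12$. Lemma~\ref{lem:bessel-differentiation} with $m=1$ gives
\[
u'(t)=-J_2(t)/t \qquad\text{and}\qquad \eta'(t)=Y_2(t)/t .
\]
The alternating series \eqref{eq:Jn-ascending} for $J_2$ is positive on $(0,T]$, hence $u'<0$. For $Y_1$ and $Y_2$ we isolate the singular part in \eqref{eq:Yn-ascending}:
\[
Y_1(t)=-\frac{2}{\pi t}+\frac{2}{\pi}\Bigl(\log\frac t2+\gamma_{\rm E}\Bigr)J_1(t)+O(t),
\qquad
Y_2(t)=-\frac{4}{\pi t^2}-\frac1\pi+\frac{2}{\pi}\Bigl(\log\frac t2+\gamma_{\rm E}\Bigr)J_2(t)+O(t^2).
\]
For $t\le T$ we bound the logarithmic term by $|\log(t/2)+\gamma_{\rm E}|\,|J_m(t)|$, which is $O(t\log(1/t))$, resp.\ $O(t^2\log(1/t))$. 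We bound the remaining power series by a geometric majorant, using $t/2\le 1/20$. These remainders are tiny compared with the leading terms. This gives $Y_1<0$, hence $\eta>0$, and $-2/t^2<Y_2(t)<0$, hence $-2/t^3<\eta'(t)<0$; the latter works because $4/\pi<2$ leaves ample room.

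\textbf{Numerical bounds \eqref{eq:matching-time-bessel-bounds}.} We evaluate the same truncated series at $t=T=1/10$, keeping a few terms and a rigorous bound on the remainder. This should give $\eta(T)=-10\,Y_1(0.1)\approx 64.6$ and $-\eta'(T)=-10\,Y_2(0.1)\approx 1276.5$. The only care needed is to make the $\log$ and $\gamma_{\rm E}$ terms rigorous by interval bounds, e.g.\ $\log 20\in(2.995,2.996)$ and $\gamma_{\rm E}\in(0.5772,0.5773)$. I expect this bookkeeping to be the main (if routine) obstacle, since the margins for $-\eta'(T)$ are only about $0.5$ out of $1276$.

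\textbf{Positivity of $N_T$.} We write $N_T=R_T-\int_0^T s^4g_{\rm b}(s)\,\mathrm ds$ and bound the two parts separately. By \eqref{eq:matching-time-bessel-bounds},
\[
R_T=-4T^3\eta'(T)-T^4\eta(T)>4\cdot10^{-3}\cdot1276-65\cdot 10^{-4}>5 .
\]
On the other hand, $0\le\theta_T\le1$ and $0<-\eta'(t)<2/t^3$ give $0\le g_{\rm b}(t)\le 2t+2/t^3$ on $(T/2,T]$. Therefore
\[
\int_0^T s^4g_{\rm b}(s)\,\mathrm ds\le\int_0^T 2s^5\,\mathrm ds+\int_{T/2}^T 2s\,\mathrm ds\le 10^{-6}+\tfrac34T^2<0.01 ,
\]
and hence $N_T>0$.

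\textbf{Tail bound \eqref{eq:tail-height-bound}.} The integrand in \eqref{eq:bessel-nicholson} with $m=1$ is decreasing in $t$, since $K_0$ is decreasing by \eqref{eq:K0-integral}. Hence $M(t):=J_1(t)^2+Y_1(t)^2$ is nonincreasing. For $t\ge 2T$ this yields
\[
|\eta(t)|\le \frac{\sqrt{M(T)}}{t}\le\frac{\sqrt{M(T)}}{2T}.
\]
This is less than $\eta(T)=|Y_1(T)|/T$, because $J_1(T)\approx0.05$ is negligible against $|Y_1(T)|\approx6.46$, so $\sqrt{M(T)}<2|Y_1(T)|$. For $T\le t\le 2T$, we extend the series argument above to $(0,2T]$ to get $Y_2<0$ there. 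Then $\eta'<0$ on that interval, so $\eta(t)\le\eta(T)$.
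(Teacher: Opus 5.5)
Your proposal is correct and follows essentially the same route as the paper's proof: explicit truncation bounds on the ascending series for $J_1,J_2,Y_1,Y_2$ carried out on $(0,2T]$ give the signs, the bound $-t^3\eta'(t)<2$ (using $4/\pi<2$), and the values at $T$; then $R_T>5.097$ is compared with $\int_0^T s^4g_{\rm b}(s)\,\mathrm ds<0.008$ to get $N_T>0$; and Nicholson's formula gives monotonicity of $J_1^2+Y_1^2$, which together with $\eta'<0$ on $[T,2T]$ yields the tail bound. The only difference is cosmetic: your interval bounds for $\log 20$ and $\gamma_{\rm E}$ are sharper than the paper needs, since the paper gets by with $2.9<\log 20<3$, $0.577<\gamma_{\rm E}<0.578$, and $3.141<\pi<3.142$, where the $\pi$ bounds dominate the error.
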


The proof of Lemma~\ref{lem:quantitative-bessel} is deferred to Appendix~\ref{app:continuous-matching-time}.
By Lemma~\ref{lem:quantitative-bessel}, specifically \eqref{eq:small-time-signs}, we have $u(t)u'(t) < 0$ for $t \in (0, T]$.
For $t\ge T$, we also have $\eta(t)\le\eta(T)$ by \eqref{eq:tail-height-bound} in Lemma~\ref{lem:quantitative-bessel}, so the trajectory $X_{\rm c}$ defined in \eqref{eq:continuous-trajectory} remains in the quadratic half-space $y\le\eta(T)$.

A smooth function $v$ is said to be flat at an endpoint $a$ if $v^{(k)}(a)=0$ for every integer $k\ge0$, with derivatives taken from within its domain.
We next prove that the functions $h$ and $p$ in \eqref{eq:h-definition} and \eqref{eq:p-definition} are smooth on $[0,T]$, with $p'(0)=0$, and match $\eta$ to all orders at $T$.

\begin{lemma}
\label{lem:h-p}
Let the functions $h,p:[0,T]\to\R$ be defined by \eqref{eq:h-definition} and \eqref{eq:p-definition}.
Then $h,p\in C^\infty([0,T])$, with the following properties.
\begin{enumerate}
\item[(i)] For $0<t\le T$, we have $h(t)>0$, $h'(t)<0$, and $p'(t)<0$.
On this interval, the function $p$ solves the differential equation
\begin{equation}
\label{eq:p-equation}
  p''(t)+\frac3t p'(t)+h(t)=0.
\end{equation}

\item[(ii)] On $[0,T/2]$, the functions $h$ and $p$ have the polynomial expressions
\begin{equation}
\label{eq:h-origin}
    \begin{aligned}
        h(t)&=h_0-t^2\\
  \mbox{and}\quad p(t)&=p_0-\frac{h_0}{8}t^2+\frac1{24}t^4
    \end{aligned}
\end{equation}
with $h_0=h(0)>0$ and $p_0=p(0)=\lim_{t\downarrow0}p(t)>\eta(T)$.
\item[(iii)] The functions $h-\eta$ and $p-\eta$ are flat at $T$ from the left.
In particular,
\begin{equation}
  h(T)=\eta(T),
  \qquad
  p(T)=\eta(T),
  \qquad\mbox{and}\qquad p'(T)=\eta'(T).
  \label{eq:p-terminal-data}
\end{equation}
\end{enumerate}
\end{lemma}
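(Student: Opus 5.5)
The plan is to analyze $g$ first, then pass to $h$ by one integration and to $p$ by the explicit double-integral formula \eqref{eq:p-definition}.

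\emph{Smoothness and sign of $g$.} On $[0,T/2]$ we have $\theta_T=0$, so $g_{\rm b}(t)=2t$ and $\psi_T=0$, giving $g(t)=2t$. On $(T/2,T]$ both $\theta_T$ and $\eta'$ are smooth. The transition $\vartheta$ is flat at $r=0$ and $\vartheta-1$ is flat at $r=1$. Hence $g_{\rm b}$ is smooth across $T/2$, and $g_{\rm b}+\eta'$ is flat at $T$. Since $\theta_T'$ is flat at both $T/2$ and $T$, the correction $N_T\psi_T$ is smooth on $[0,T]$ and flat at $T$. Therefore $g\in C^\infty([0,T])$ and $g+\eta'$ is flat at $T$. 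For positivity, on $(T/2,T]$ the function $g_{\rm b}$ is a convex combination of $2t>0$ and $-\eta'(t)>0$ (the latter by \eqref{eq:small-time-signs}). Moreover $N_T>0$ (Lemma~\ref{lem:quantitative-bessel}) and $\psi_T\ge0$. So $g>0$ on $(0,T]$.

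\emph{Part (i) and the behaviour of $h$.} By \eqref{eq:h-definition}, $h\in C^\infty$ with $h'=-g<0$ on $(0,T]$ and $h(T)=\eta(T)>0$. Since $h$ is decreasing, $h>0$ on $[0,T]$, with $h_0>0$. Flatness of $h-\eta$ at $T$ follows because $(h-\eta)'=-(g+\eta')$ is flat and $h(T)=\eta(T)$. On $[0,T/2]$, $h(t)=h(T/2)+\int_t^{T/2}2s\,\mathrm ds$, which is of the form $h_0-t^2$.

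\emph{The function $p$.} Set $F(r):=r^{-3}\int_0^r s^3h(s)\,\mathrm ds=\int_0^1 \tau^3 h(r\tau)\,\mathrm d\tau$. This shows $F\in C^\infty([0,T])$, so $p$ is smooth up to $0$ and $p_0$ exists. We have $p'=-F<0$ on $(0,T]$ because $h>0$, and $p'(0)=0$ since $F(0)=0$. Differentiating $t^3p'=-\int_0^t s^3h$ gives $(t^3p')'=-t^3h$, which is \eqref{eq:p-equation}. On $[0,T/2]$, substituting $h=h_0-t^2$ into $F$ yields $F=h_0t/4-t^3/6$, and integrating gives the polynomial in \eqref{eq:h-origin}. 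Finally, $p_0>p(T)=\eta(T)$ because $p$ is strictly decreasing.

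\emph{Matching at $T$: the main obstacle.} We need $p'(T)=\eta'(T)$, i.e.\ $T^3\eta'(T)=-\int_0^T s^3h$. Using the integration by parts $\int_0^T s^3h=\tfrac{T^4}{4}h(T)+\tfrac14\int_0^T s^4g$ together with $h(T)=\eta(T)$, this condition becomes $\int_0^T s^4g=-4T^3\eta'(T)-T^4\eta(T)=R_T$. That moment condition holds by the choice of $N_T$, since $\int_0^T s^4\psi_T=1$.

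Higher-order flatness is then automatic. Both $p$ and $\eta$ satisfy $(t^3w')'=-t^3\cdot(\text{forcing})$, with forcings $h$ and $\eta$ respectively, on a neighbourhood of $T$ away from $0$. Hence $w=p-\eta$ satisfies $w''+3w'/t=-(h-\eta)$, with $w(T)=w'(T)=0$ and a flat right-hand side. Inductively differentiating this ODE shows that every derivative of $w$ vanishes at $T$. This yields \eqref{eq:p-terminal-data} and completes (iii).
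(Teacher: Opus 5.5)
Your proof is correct and follows essentially the paper's argument. You use the same flatness and positivity analysis of $g$, the same integration by parts reducing $p'(T)=\eta'(T)$ to the moment condition $\int_0^T s^4g(s)\,\mathrm{d}s=R_T$, and the same inductive differentiation of $w''+3w'/t=-(h-\eta)$.

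The only deviation is that you obtain smoothness of $p$ at $0$ by rescaling rather than from the polynomial form on $[0,T/2]$. Here the substitution $s=r\tau$ actually gives $F(r)=r\int_0^1\tau^3h(r\tau)\,\mathrm{d}\tau$. Your version drops the factor $r$, which would give $F(0)=h_0/4$ and contradict your claim $F(0)=0$. With that factor restored, or simply by using your explicit formula $F(t)=h_0t/4-t^3/6$ on $[0,T/2]$ as the paper does, the argument is complete.
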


\begin{proof}
For (i), the transition function $\theta_T$ in \eqref{eq:smooth-step} is smooth and nondecreasing, with $\theta_T$ flat at $T/2$ and $1-\theta_T$ flat at $T$.
Consequently, the functions $g_{\rm b}$ and $\psi_T$ defined in \eqref{eq:continuous-baseline} and \eqref{eq:NT-definition} are smooth on $[0,T]$, with $\psi_T\ge0$.
By Lemma~\ref{lem:quantitative-bessel}, we have $-\eta'>0$ on $(0,T]$ and $N_T>0$.
The function $g_{\rm b}$ equals $2t>0$ on $(0,T/2]$ and is a convex combination of $2t$ and $-\eta'(t)$ on $(T/2,T]$, so $g_{\rm b}>0$ on $(0,T]$.
Because $N_T\psi_T\ge0$ and $\psi_T=0$ on $[0,T/2]$, the function $g$ in \eqref{eq:g-definition} is smooth on $[0,T]$, positive on $(0,T]$, and equal to $2t$ on $[0,T/2]$.

By its integral definition \eqref{eq:h-definition}, the function $h$ belongs to $C^\infty([0,T])$, with $h'=-g<0$ on $(0,T]$ and $h\ge\eta(T)>0$.
Differentiating the integral in \eqref{eq:p-definition}, we find that $p$ is smooth on $(0,T]$ and that its derivative is given by
\begin{equation}
  p'(t)=-t^{-3}\int_0^t s^3h(s)\dd s<0
  \qquad\text{for }\  0<t\le T.
  \label{eq:p-prime}
\end{equation}
The derivative of $t^3p'(t)$ is therefore $-t^3h(t)$, which is equivalent to \eqref{eq:p-equation} and proves (i).

For (ii), since $h' = -g = -2t$ on $[0, T/2]$, we obtain the expression for $h$ in \eqref{eq:h-origin}, with $h_0 \ge \eta(T) > 0$ by \eqref{eq:h-definition}.
On this interval, the integral in \eqref{eq:p-prime} reduces to
\[
  p'(t)=-t^{-3}\int_0^t s^3(h_0-s^2)\dd s
       =-\frac{h_0}{4}t+\frac16t^3.
\]
Thus, $p(t)+h_0t^2/8-t^4/24$ is constant on $(0,T/2]$, and $p$ has a finite limit as $t\downarrow0$.
This shows the convergence of the improper integral that defines $p(0)$ in \eqref{eq:p-definition} and identifies the integration constant as $p_0$.
The polynomial expression \eqref{eq:h-origin} therefore extends to $t=0$, which proves the smoothness of $p$ on $[0,T]$ and the initial condition $p'(0)=0$.
Since $h>0$, we also have
\[
  p_0-\eta(T)
  =\int_0^T r^{-3}\left(\int_0^r s^3h(s)\dd s\right)\dd r>0.
\]
This proves (ii).

For (iii), note from formula \eqref{eq:continuous-baseline} that $g_{\rm b}+\eta'=(1-\theta_T)(2t+\eta')$ near $T$.
Since $1-\theta_T$ and $\theta_T'$ are flat at $T$, both $g_{\rm b}+\eta'$ and $\psi_T=\theta_T'/I_T$ are flat at $T$.
From the definition of $g$ in \eqref{eq:g-definition} and the flatness of $g_{\rm b}+\eta'$ and $\psi_T$ at $T$, we conclude that $g+\eta'$ is flat at $T$.
From the identity
\[
  h(t)-\eta(t)=\int_t^T\bigl(g(s)+\eta'(s)\bigr)\dd s
\]
and the flatness of $g+\eta'$ at $T$, we conclude that $h-\eta$ is flat at $T$.
Since $\int_0^T s^4 \psi_T(s)\,\mathrm{d}s = 1$, we obtain from the definition of $N_T$ in \eqref{eq:NT-definition} that
\begin{equation}
  \int_0^T s^4g(s)\dd s
  =\int_0^T s^4g_{\rm b}(s)\dd s+N_T=R_T.
  \label{eq:g-moment}
\end{equation}
Noting that $h'=-g$ and $h(T)=\eta(T)$, we have
\begin{equation*}
  \int_0^Tt^3h(t)\dd t
  =\frac14\Big(T^4\eta(T)+\int_0^Ts^4g(s)\dd s\Big)
    =-T^3\eta'(T).
\end{equation*}
Here, the first equality uses integration by parts, while the second one follows from \eqref{eq:g-moment} and the definition of $R_T$ in \eqref{eq:IT-RT}.
Substituting the preceding integral identity into \eqref{eq:p-prime} at $t=T$, we obtain $p'(T)=\eta'(T)$.
The values $h(T)=p(T)=\eta(T)$ follow from \eqref{eq:h-definition} and \eqref{eq:p-definition}, completing the identities in \eqref{eq:p-terminal-data}.

Set $w:=p-\eta$.
Subtracting \eqref{eq:scalar-bessel-equation} for $\eta$ from \eqref{eq:p-equation} and using \eqref{eq:p-terminal-data}, we obtain
\[
  w''(t)+\frac3t w'(t)=-(h(t)-\eta(t))
  \qquad\mbox{and}\qquad w(T)=w'(T)=0.
\]
Because $h-\eta$ is flat at $T$ and $3/t$ is smooth there, we prove inductively by successive differentiation of this equation that every left derivative of $w$ vanishes at $T$, completing (iii).
\end{proof}

To turn the equation for $p$ in Lemma~\ref{lem:h-p}(i) into the second coordinate equation of the NAG-ODE, we next verify $\gamma(p(t))=h(t)$ for $0\le t\le T$ and prove that $\gamma$ is smooth on $\R$.

\begin{lemma}
\label{lem:gamma}
Let $\gamma:\R\to\R$ and $X_{\rm c}:[0,+\infty)\to\R^2$ be defined by \eqref{eq:gamma-definition} and \eqref{eq:continuous-trajectory}.
Then the following statements hold.
\begin{enumerate}
\item[(i)] The function $\gamma$ belongs to $C^\infty(\R)$ and satisfies
\begin{align*}
  \gamma(p(t))&=h(t) &&\text{for }0\le t\le T
  \\
  \mbox{and}\quad \gamma(q)&=q &&\text{for }q\le\eta(T).
\end{align*}

\item[(ii)] The trajectory $X_{\rm c}$ defined by \eqref{eq:continuous-trajectory} solves the NAG-ODE \eqref{eq:nag-ode} for $f_{\rm c}$, with $X_{\rm c}(0)=x_0$ and $\dot X_{\rm c}(0)=0$.
\end{enumerate}
\end{lemma}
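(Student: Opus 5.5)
The plan is to prove smoothness of $\gamma$ separately on the three open pieces of \eqref{eq:gamma-definition}, and then check $C^\infty$ matching at the two junctions $q=\eta(T)$ and $q=p_0$. Part (ii) then follows by direct substitution.

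On $(-\infty,\eta(T))$ the function $\gamma$ is the identity. On $(\eta(T),p_0)$ it equals $h\circ p^{-1}$, where $p^{-1}$ is smooth because $p'<0$ on $(0,T]$ by Lemma~\ref{lem:h-p}(i). On $(p_0,+\infty)$ it is the integral of $a_\gamma$, which is smooth there: the square root $\sqrt{9h_0^2+96(q-p_0)}$ stays positive, and $\vartheta$ is smooth. The identity $\gamma(p(t))=h(t)$ holds by definition for $0<t<T$ and extends to $t\in\{0,T\}$ by continuity.

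\textbf{Junction at $\eta(T)=p(T)$.} Near $t=T$, $p$ is a smooth diffeomorphism. Lemma~\ref{lem:h-p}(iii) gives that $h-\eta$ and $p-\eta$ are flat at $T$. I would extend $h$ and $p$ past $T$ by $\eta$, which gives $C^\infty$ functions $\tilde h,\tilde p$ near $T$ with $\tilde p'(T)=\eta'(T)<0$. Then $\gamma=\tilde h\circ\tilde p^{-1}$ on a neighborhood of $\eta(T)$. This holds because for $t\ge T$ we have $\tilde h(t)=\eta(t)=\tilde p(t)$, so $\tilde h\circ\tilde p^{-1}(q)=q$ there. Hence $\gamma$ is smooth across $\eta(T)$.

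\textbf{Junction at $p_0$.} This is the main obstacle, because $p'(0)=0$ and so $p^{-1}$ is singular at $p_0$. I would use the explicit polynomials \eqref{eq:h-origin} on $[0,T/2]$. Set $\tau=t^2$. Then $p_0-q=\frac{h_0}{8}\tau-\frac1{24}\tau^2$, which can be solved as
\[
\tau=\tfrac{3}{2}\Bigl(h_0-\sqrt{h_0^2-\tfrac{32}{3}(p_0-q)}\Bigr),
\]
taking the root with $\tau\to0$. Therefore
\[
\gamma(q)=h_0-\tau=h_0-\tfrac32h_0+\tfrac12\sqrt{9h_0^2-96(p_0-q)} .
\]
This is an analytic function of $q$ on a neighborhood of $p_0$, namely $\{9h_0^2+96(q-p_0)>0\}$. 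Its derivative is $24/\sqrt{9h_0^2+96(q-p_0)}$. Since $\zeta$ vanishes identically for $q\le p_0+1/3$, this derivative coincides exactly with $a_\gamma$ on $[p_0,p_0+1/3]$. So the third branch of \eqref{eq:gamma-definition} equals this same analytic function on $[p_0,p_0+1/3]$, and $\gamma$ is $C^\infty$ across $p_0$.

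Two checks are needed here. First, $p$ is monotone on $[0,T/2]$, so that the branch of the square root is the correct one. Second, the interval $p([0,T/2])$ is a genuine left neighborhood of $p_0$; this holds by strict monotonicity of $p$.

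\textbf{Part (ii).} We have $\nabla f_{\rm c}(x,y)=(x,\gamma(y))$. The first coordinate of $X_{\rm c}$ is $u$, which solves \eqref{eq:scalar-bessel-equation} on $(0,\infty)$, with $u(0)=1/2$ and $u'(0)=0$ by Lemma~\ref{lem:bessel-ascending}.

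For the second coordinate:
\begin{itemize}
\item On $(0,T)$, we have $\gamma(p)=h$, so \eqref{eq:p-equation} is exactly the required equation.
\item On $(T,\infty)$, we have $\eta\le\eta(T)$ by \eqref{eq:tail-height-bound}, so $\gamma(\eta)=\eta$ and \eqref{eq:scalar-bessel-equation} applies.
\item At $T$, the curve is $C^2$, indeed $C^\infty$, by the flatness in Lemma~\ref{lem:h-p}(iii).
\end{itemize}
The initial data are $X_{\rm c}(0)=(1/2,p_0)=x_0$ and $\dot X_{\rm c}(0)=(u'(0),p'(0))=0$. Since $\gamma'$ is bounded, $f_{\rm c}$ has Lipschitz gradient, and uniqueness in \cite[Theorem~1]{su2016} identifies $X_{\rm c}$ as the solution.
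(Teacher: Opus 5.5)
Your proposal is correct and follows the paper's proof essentially step for step. The matching steps are: smoothness on the three open pieces; the junction at $\eta(T)$ via extending $h,p$ by $\eta$ and inverting with $\tilde p'(T)=\eta'(T)<0$; the junction at $p_0$ via $\tau=t^2$ and the explicit branch $\tfrac12\bigl(\sqrt{9h_0^2+96(q-p_0)}-h_0\bigr)$, whose derivative equals $a_\gamma$ on $[p_0,p_0+1/3]$ and whose value at $p_0$ is $h_0$; and part (ii) by direct substitution.
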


\begin{proof}
By Lemma~\ref{lem:h-p}(i), the function $p$ is strictly decreasing on $[0,T]$.
Its endpoint values are $p(0)=p_0$ by Lemma~\ref{lem:h-p}(ii) and $p(T)=\eta(T)$ by Lemma~\ref{lem:h-p}(iii), so it is a bijection from $[0,T]$ onto $[\eta(T),p_0]$.
The inverse $p^{-1}$ is continuous on $[\eta(T),p_0]$ and smooth on $(\eta(T),p_0)$ by the inverse function theorem.
By \eqref{eq:gamma-definition}, $\gamma(q)=h(p^{-1}(q))$ for $\eta(T)<q<p_0$, so $\gamma$ is smooth on $(\eta(T),p_0)$.
Noting from Lemma~\ref{lem:h-p}(i) that $h'<0$ and $p'<0$ on $(0,T)$, we have
\begin{equation}
    \label{eq:p_mono_0T}
    \gamma'(p(t))=\frac{h'(t)}{p'(t)}>0 \qquad \mbox{for all} \qquad 0<t<T.
\end{equation}

At $q=\eta(T)$, extend $h$ and $p$ locally beyond $T$ by $\eta$.
The extensions are smooth by Lemma~\ref{lem:h-p}(iii), and $p'(T)=\eta'(T)<0$.
By the inverse function theorem, the composition $h \circ p^{-1}$ is therefore smooth near $\eta(T)$.
For $q<\eta(T)$ sufficiently close to $\eta(T)$, both extended functions equal $\eta$, so $h(p^{-1}(q))=q$.
Thus the composition agrees with the lower branch of $\gamma$, proving smoothness at $q=\eta(T)$.

At $q=p_0$, the inverse function theorem does not apply directly to $p$, because $p'(0)=0$ by Lemma~\ref{lem:h-p}(ii).
The polynomial formulas in that lemma allow us to use the variable $\tau:=t^2$ instead.
Define the polynomials $H,V:\R\to\R$ by
\[
  H(\tau):=h_0-\tau
  \qquad\mbox{and}\qquad
  V(\tau):=p_0-\frac{h_0}{8}\tau+\frac1{24}\tau^2.
\]
The formulas in Lemma~\ref{lem:h-p}(ii) read $h(t)=H(t^2)$ and $p(t)=V(t^2)$ near zero.
Since $V(0) = p_0$ and $V'(0) = -h_0/8 \neq 0$, the function $V$ admits a smooth local inverse near $q = p_0$ by the inverse function theorem, mapping $p_0$ to $\tau = 0$.
Substituting the local inverse of $V$ into $H$, we obtain the smooth extension $\gamma_{\rm loc}$ of $h\circ p^{-1}$ across $p_0$, with formula and derivative given by
\[
  \gamma_{\rm loc}(q)
  :=\frac{\sqrt{9h_0^2+96(q-p_0)}-h_0}{2}
  \qquad\mbox{and}\qquad
  \gamma_{\rm loc}'(q)
  =\frac{24}{\sqrt{9h_0^2+96(q-p_0)}}>0.
\]
The transition function $\zeta$ in \eqref{eq:zeta-definition} equals zero on $[p_0,p_0+1/3]$ and one on $[p_0+2/3,+\infty)$.
Hence $a_\gamma=\gamma_{\rm loc}'$ on $[p_0,p_0+1/3]$ and $a_\gamma=1$ on $[p_0+2/3,+\infty)$.
Since $\gamma_{\rm loc}(p_0)=h_0$, the integral formula for $\gamma$ in \eqref{eq:gamma-definition} agrees with $\gamma_{\rm loc}$ on $[p_0,p_0+1/3]$.
The function $\gamma$ is therefore smooth at $q=p_0$.
Since $\gamma$ is also smooth at $q=\eta(T)$ and on $\R\setminus\{\eta(T),p_0\}$, we have $\gamma\in C^\infty(\R)$.
For $0<t<T$, we have $\eta(T)<p(t)<p_0$.
Then it follows from definition \eqref{eq:gamma-definition} that $\gamma(p(t))=h(t)$.
At $t=0$ and $t=T$, we obtain from the definition \eqref{eq:gamma-definition} and the endpoint values in Lemma~\ref{lem:h-p}(ii)--(iii) that $\gamma(p_0)=h_0=h(0)$ and $\gamma(\eta(T))=\eta(T)=h(T)$.
Finally, $\gamma(q)=q$ for $q\le\eta(T)$ by \eqref{eq:gamma-definition}, completing (i).

For (ii), the gradient of the objective in \eqref{eq:continuous-objective} is $\nabla f_{\rm c}(x,y)=(x,\gamma(y))$, and the first coordinate $u$ of $X_{\rm c}$ satisfies \eqref{eq:scalar-bessel-equation}.
For the second coordinate on $(0,T)$, substitute $\gamma(p(t))=h(t)$ from (i) into the differential equation for $p$ established in Lemma~\ref{lem:h-p}(i).
Thus $p$ satisfies the equation for the second coordinate of the NAG-ODE for $f_{\rm c}$:
\[
  p''(t)+\frac3t p'(t)+\gamma(p(t))=0.
\]
For $t\ge T$, we have $\eta(t)\le\eta(T)$ by Lemma~\ref{lem:quantitative-bessel}.
Thus $\gamma(\eta(t))=\eta(t)$ by (i), and \eqref{eq:scalar-bessel-equation} is the NAG-ODE equation for the second coordinate $\eta$ on $[T,+\infty)$.
Since $p - \eta$ is flat at $T$ by Lemma~\ref{lem:h-p}(iii) and the first coordinate is $u$ throughout $[0, \infty)$, the concatenated curve $X_{\mathrm{c}}$ belongs to $C^\infty([0, \infty); \R^2)$.
Combining the equations on $(0, T)$ and $[T, \infty)$, we conclude that $X_{\mathrm{c}}$ solves the NAG-ODE~\eqref{eq:nag-ode} for all $t > 0$.
Finally, combining $u(0) = 1/2$ and $u'(0) = 0$ with the polynomial expression for $p$ in Lemma~\ref{lem:h-p}(ii), we obtain $X_{\mathrm{c}}(0) = x_0$ and $\dot{X}_{\mathrm{c}}(0) = 0$.
This proves (ii).
\end{proof}

To complete the proof of Theorem~\ref{thm:continuous-counterexample}, it remains to prove that the objective $f_{\rm c}$ in \eqref{eq:continuous-objective} is strongly convex with a Lipschitz-continuous gradient and that the restart quantity $G$ in \eqref{eq:continuous-gr} satisfies $G(t)<0$ for every $t>0$.

\begin{proof}[Proof of Theorem~\ref{thm:continuous-counterexample}]
Take the objective $f:=f_{\rm c}$ and the initial point $x_0$ as defined in \eqref{eq:continuous-objective}.
By Lemma~\ref{lem:gamma}(i), we have $\gamma \in C^\infty(\mathbb{R})$ and hence $f \in C^\infty(\mathbb{R}^2)$, with gradient and Hessian given by
\[
  \nabla f(x,y)=\begin{pmatrix}x\\\gamma(y)\end{pmatrix}
  \qquad\mbox{and}\qquad
  \nabla^2 f(x,y)=
  \begin{pmatrix}1&0\\0&\gamma'(y)\end{pmatrix}.
\]
By \eqref{eq:p_mono_0T}, established in the proof of Lemma~\ref{lem:gamma}, we have $\gamma'(p(t))>0$ for $0<t<T$, and hence $\gamma'>0$ on $(\eta(T),p_0)$.
On the remaining intervals, we obtain from the definition of $\gamma$ in \eqref{eq:gamma-definition} that $\gamma'(q) = 1$ for $q \le \eta(T)$ and $\gamma'(q) = a_\gamma(q)$ for $q \ge p_0$.
Note from the proof of Lemma~\ref{lem:gamma} that $a_\gamma(q)>0$ for all $q\ge p_0$ and $a_\gamma(q)=1$ on $[p_0+2/3,+\infty)$.
Since $\gamma'$ is positive and continuous on the compact interval $[\eta(T), p_0 + 2/3]$ and equals $1$ outside this interval, there exist constants $0 < \underline{m} \le \overline{m} < +\infty$ such that $\underline{m} \le \gamma'(q) \le \overline{m}$ for all $q \in \mathbb{R}$.
The eigenvalues of $\nabla^2 f$ therefore lie between $\mu$ and $L$, where $\mu:=\min\{1,\underline m\}$ and $L:=\max\{1,\overline m\}$.
This proves that $f\in C^\infty(\R^2)\cap\mathcal S_{\mu,L}(\R^2)$.
Since $\eta(T)>0$ and $\gamma(q)=q$ for $q\le\eta(T)$, the integral in \eqref{eq:continuous-objective} can be evaluated explicitly to obtain
\begin{equation}
  f_{\rm c}(x,y)=\frac12(x^2+y^2)
  \qquad \text{when }y\le\eta(T).
  \label{eq:continuous-quadratic-half-space}
\end{equation}
In particular, $\nabla f(0,0)=0$ and $f(0,0)=0$, so the unique minimizer is $x^\star=(0,0)$ and $\fstar=0$.
By Lemma~\ref{lem:gamma}(ii), $X:=X_{\rm c}$ solves the NAG-ODE \eqref{eq:nag-ode} with the prescribed initial conditions.
Because $\nabla f$ is globally Lipschitz continuous, this solution is unique by \cite[Theorem~1]{su2016}.

Finally, it suffices to verify \eqref{eq:continuous-main-nonrestart}, i.e., $G(t)<0$ for all $t>0$.
On $(0,T]$, we have $u>0$ and $u'<0$ by Lemma~\ref{lem:quantitative-bessel}, while $h>0$ and $p'<0$ by Lemma~\ref{lem:h-p}(i).
Since $\gamma(p(t))=h(t)$ for $0\le t\le T$ by Lemma~\ref{lem:gamma}(i), the restart quantity $G$ in \eqref{eq:continuous-gr}, i.e., $G(t)=\ip{\nabla f(X(t))}{\dot X(t)}$ for $t>0$, satisfies
\[
  G(t)=u(t)u'(t)+h(t)p'(t)<0
  \qquad\text{for }0<t\le T,
\]
so no restart occurs on this interval.
For $t \ge T$, we have $\eta(t) \le \eta(T)$ by Lemma~\ref{lem:quantitative-bessel}.
Thus $X(t)$ lies in the half-space where $f$ has the quadratic form \eqref{eq:continuous-quadratic-half-space}, and
\[
  G(t)=u(t)u'(t)+\eta(t)\eta'(t)
      =\frac12\frac{\mathrm d}{\mathrm dt}
        \left(\frac{A(t)}{t^2}\right)
  \qquad\mbox{and}\qquad A(t):=J_1(t)^2+Y_1(t)^2.
\]
Since the integrand $\ee^{-v\cosh s}$ in \eqref{eq:K0-integral} is positive and nonincreasing in $v$, the function $K_0$ is positive and nonincreasing on $(0,+\infty)$.
Nicholson's formula in Lemma~\ref{lem:bessel-nicholson}, with $m=1$, expresses $A$ as the integral of $K_0(2t\sinh r)$ against the positive weight $8\cosh(2r)/\pi^2$.
Consequently, $A$ is positive and nonincreasing on $(0,+\infty)$.
Since $A$ is differentiable, $A'\le0$, and hence
\[
  G(t)=\frac{A'(t)}{2t^2}-\frac{A(t)}{t^3}<0
  \qquad\text{for }t\ge T.
\]
This completes the proof.
\end{proof}

\begin{proof}[Proof of Corollary~\ref{coro:continuous-counterexample}]
For the objective $f:=f_{\rm c}$ and the initial point $x_0$ in \eqref{eq:continuous-objective}, we conclude from Theorem~\ref{thm:continuous-counterexample} that no gradient restart occurs, so $X^{\mathrm{gr}}=X_{\rm c}$.
By Lemma~\ref{lem:quantitative-bessel}, we have $\eta(t)\le\eta(T)$ for $t\ge T$, so the trajectory $X_{\rm c}$ defined in \eqref{eq:continuous-trajectory} lies in the half-space $y\le\eta(T)$.
By the quadratic expression \eqref{eq:continuous-quadratic-half-space} and the value $\fstar=0$ established in the proof of Theorem~\ref{thm:continuous-counterexample}, the objective gap is given by
\[
  f(X^{\mathrm{gr}}(t))-\fstar
  =\frac{u(t)^2+\eta(t)^2}{2}
  =\frac{J_1(t)^2+Y_1(t)^2}{2t^2}.
\]
When the expansions \eqref{eq:Jn-large} and \eqref{eq:Yn-large} in Lemma~\ref{lem:bessel-asymptotics} with $m=1$ are squared and added, the mixed sine-cosine terms cancel.
Hence we have $J_1(t)^2 + Y_1(t)^2 = 2/(\pi t) + O(t^{-3})$ as $t \to +\infty$, from which we obtain \eqref{eq:continuous-main-properties}.
For every $C,c_0>0$, the ratio of $f(X^{\mathrm{gr}}(t))-\fstar$ to $C\ee^{-c_0t}$ tends to infinity as $t\to+\infty$, so no such exponential upper bound can hold for all sufficiently large times.
\end{proof}

\begin{remark}
\label{rem:small-matching-times}
The choice $T = 1/10$ is not essential.
Indeed, the proofs of Lemmas~\ref{lem:h-p} and \ref{lem:gamma} and Theorem~\ref{thm:continuous-counterexample} depend on $T$ only through the sign conditions \eqref{eq:small-time-signs}, the tail bound \eqref{eq:tail-height-bound}, and the positivity of $N_T$ in Lemma~\ref{lem:quantitative-bessel}.
Using Lemmas~\ref{lem:bessel-differentiation} and \ref{lem:bessel-ascending}, we have $u(t) \to 1/2$, $u'(t) \sim -t/8$, $\eta(t) \sim 2/(\pi t^2)$, and $\eta'(t) \sim -4/(\pi t^3)$ as $t \downarrow 0$, so there exists $\delta > 0$ such that \eqref{eq:small-time-signs} holds for every $T \in (0, \delta]$.
Moreover, since $\eta(t) \to 0$ as $t \to +\infty$ by \eqref{eq:Yn-large} in Lemma~\ref{lem:bessel-asymptotics}, $\eta$ is bounded above on $[\delta, +\infty)$.
Because $\eta(T) \to +\infty$ as $T \downarrow 0$ and $\eta$ is strictly decreasing on $(0, \delta]$, we obtain $\eta(t) \le \eta(T)$ for all $t \ge T$ whenever $T \in (0, \delta)$ is sufficiently small, which verifies \eqref{eq:tail-height-bound}.
Substituting the small-argument asymptotics of $\eta$ and $\eta'$ into \eqref{eq:continuous-baseline}--\eqref{eq:NT-definition}, we further obtain $\int_0^T s^4 g_{\mathrm{b}}(s)\,\mathrm{d}s = O(T^2)$ and $R_T \to 16/\pi$, and hence $N_T \to 16/\pi > 0$ as $T \downarrow 0$.
Consequently, for every sufficiently small $T > 0$, all conclusions of Lemmas~\ref{lem:h-p} and \ref{lem:gamma} and Theorem~\ref{thm:continuous-counterexample} remain valid by the same arguments.
Since $\eta(T) \to +\infty$ as $T \downarrow 0$, we can therefore choose the objective $f_{\mathrm{c}}$ to coincide with the isotropic quadratic $(x^2+y^2)/2$ on any prescribed bounded subset of $\mathbb{R}^2$.
\end{remark}

\section{Discussions and conclusions}
\label{sec:conclusion}
We constructed smooth, strongly convex objectives for which gradient restart never activates under the prescribed initializations, for both NAG~\eqref{eq:discrete-nag} and NAG-ODE~\eqref{eq:nag-ode}.
In both examples, the restarted and unrestarted methods generate identical trajectories, so gradient restart provides no convergence improvement.
Subsection~\ref{subsec:construction-principles} explains the analytical principles underlying our constructions, while Subsection~\ref{subsec:further-questions} discusses natural extensions and open questions.

\subsection{Motivation and principles of the constructions}
\label{subsec:construction-principles}

Our counterexamples start from the mechanism by which oscillations force gradient restarts on quadratic objectives~\cite{bao2026}.
For the one-dimensional quadratic $z \mapsto z^2/2$ with a nonzero initial point $z_0$, the unique solution of the NAG-ODE~\eqref{eq:nag-ode} starting from rest is $2z_0 u(t)$, which involves only the first-kind scaled Bessel function $u = J_1/t$ in~\eqref{eq:bessel-branches}.
By Lemmas~\ref{lem:bessel-differentiation} and~\ref{lem:bessel-asymptotics}, $u(t) = \sqrt{2/\pi}\, t^{-3/2}\cos(t - 3\pi/4) + O(t^{-5/2})$ and $u'(t) = -J_2(t)/t$, so the restart quantity $G(t)=4z_0^2 u(t)u'(t)$ has a leading term proportional to $-t^{-3}\cos(2t)$ that periodically changes sign.
To suppress this oscillation in the radial norm, we move to $\R^2$ and pair the first-kind solution with its second-kind counterpart ($\eta = -Y_1/t$ in continuous time and $Q_m^{(-1)}$ in discrete time), whose leading asymptotic oscillation replaces $\cos$ with $\sin$ (Lemmas~\ref{lem:ferrers-asymptotics} and~\ref{lem:bessel-asymptotics}).
Geometrically, combining these two out-of-phase branches turns the one-dimensional back-and-forth oscillation into a two-dimensional spiral whose leading $\cos^2$ and $\sin^2$ terms sum to one, so we obtain the non-oscillatory $1/(\pi t^3)$ leading term of the continuous-time objective gap in Corollary~\ref{coro:continuous-counterexample}.
Beyond the leading asymptotics, we conclude from the Nicholson-type integral formula in Lemma~\ref{lem:bessel-nicholson} that the scaled Bessel functions $u$ and $\eta$ defined in \eqref{eq:bessel-branches} satisfy $u(t)u'(t)+\eta(t)\eta'(t)<0$ for every $t>0$, so the restart criterion is never triggered along the curve $(u,\eta)$ for the quadratic objective $(x,y)\mapsto(x^2+y^2)/2$.

The essential distinction in our counterexamples lies between starting on a quadratic objective with zero momentum and entering a quadratic region with a prescribed state.
On a globally quadratic objective, zero-momentum initialization cannot excite the second-kind branch: in continuous time, $\eta$ diverges at the origin, whereas in discrete time, $y_1 = x_1 = Z_0$ would force $x_2 = aZ_0$ rather than $Z_1 = aZ_0 - (0, d)$ with $d > 0$.
To avoid initializing at rest inside the quadratic domain, we confine the isotropic quadratic objective to a half-space containing the origin, which the trajectory enters only after a prescribed initial phase.
Outside this half-space, we treat the objective modification as undetermined and work backward from the necessary conditions for the main theorems to hold.
In discrete time, matching the first two iterates with $\{Z_k\}$ directly determines the required gradients at the initial points via the NAG update, from which candidate objectives are obtained by simple interpolation.
In continuous time, the connecting trajectory $p$ and the second gradient component $\gamma$ are both unknown and coupled through the NAG-ODE via $p'' + 3p'/t + h = 0$ with $h(t) = \gamma(p(t))$.
We therefore transfer the necessary conditions onto the auxiliary function $g = -h'$, construct $g$ explicitly, and successively recover $(h, p)$, $\gamma$, and $f_{\mathrm{c}}$.
In both settings, the trajectory enters the quadratic half-space with nonzero momentum that activates both fundamental branches, rendering the zero-velocity quadratic restart guarantee of \cite{bao2026} inapplicable. 
\subsection{Further Discussion and Extensions}
\label{subsec:further-questions}

The Ferrers and Bessel constructions suggest a connection between the discrete and continuous counterexamples.
By Lemmas~\ref{lem:ferrers-asymptotics} and \ref{lem:discrete-orbit}, the discrete objective gap satisfies $f_{\mathrm{d}}(Z_k) - f_{\mathrm{d}}(0) = O(\rho^{2k}k^{-3})$ as $k \to +\infty$, whereas the continuous objective gap is asymptotic to $1/(\pi t^3)$ by Corollary~\ref{coro:continuous-counterexample}.
For a fixed quadratic curvature $\mu_{\mathrm{d}}$ and a step size $0 < s < 1/\mu_{\mathrm{d}}$, the corresponding Ferrers parameter $\rho_s := \sqrt{1 - s\mu_{\mathrm{d}}}$ approaches one as $s \downarrow 0$.
At a heuristic level, as $\rho_s \to 1$, the exponential factor $\rho_s^{2k}$ in the discrete bound formally disappears and leaves the algebraic $k^{-3}$ decay, which mirrors the sublinear $O(t^{-3})$ rate in continuous time and illustrates the structural parallel between the two constructions.

The explicit constructions in this paper admit several extensions.
First, by Remark~\ref{rem:discrete-extensions}, we can extend the step size of the discrete construction to the endpoint $s=1/L$ and can also choose the objective to be $C^\infty$.
Moreover, by Remark~\ref{rem:small-matching-times}, we can choose the objective in the continuous construction to coincide with $\norm{z}^2/2$ on any prescribed bounded set.
Furthermore, although both counterexamples are two-dimensional, they can be extended to any dimension $n\ge2$ by adding a suitable strongly convex quadratic term in the additional coordinates and initializing those coordinates at zero.

Although these extensions show that non-activation can occur across a broad range of settings, the consistent empirical effectiveness of gradient restart in numerical studies \cite{odonoghue2015,su2016} suggests that such worst-case counterexamples do not reflect its typical behavior. This contrast naturally motivates studying the convergence of gradient restart for generic initializations. Specifically, for a fixed $f\in\mathcal{S}_{\mu,L}(\mathbb{R}^n)$, a central question is whether the gradient-restarted NAG-ODE \eqref{eq:nag-ode} admits global or local linear convergence guarantees for Lebesgue-almost every initial point. Likewise, for NAG \eqref{eq:discrete-nag} with a fixed step size $0<s<1/L$, we ask whether gradient restart improves the linear convergence rate relative to the unrestarted method for Lebesgue-almost every initial point, apart from finite termination. Resolving these questions would help bridge the gap between the practical performance of gradient restart and its theoretical convergence guarantees.

\section*{Acknowledgments}

GPT-6 was used as an auxiliary tool to help develop the counterexample construction and check spelling and grammar.
All mathematical arguments were independently completed by the authors.

\appendix
\small 
\section{Proofs of auxiliary lemmas}

\subsection{Proof of Lemma~\ref{lem:discrete-parameters}}
\label{app:discrete-parameter-check}

We verify the bounds in Lemma~\ref{lem:discrete-parameters} that make $h_{\rm d}$ in \eqref{eq:discrete-h} well defined and place the extrapolated point $y_2$ in the half-space $v\le c$.

\begin{proof}
Recall the constants in \eqref{eq:discrete-parameters}--\eqref{eq:U-d}.
Since $18/19<\rho<19/20$ and $3<\pi<22/7<\sqrt{10}$,
\[
  U>\frac\rho\pi>\frac{18}{19}\frac7{22}
  >\frac{127}{1000}=c+2\varepsilon.
\]
Since $a=\rho^2>0$, retaining the first five terms of the power series for $\operatorname{arctanh}\rho/\rho$, we have
\[
  S:=\frac{\operatorname{arctanh}\rho}{\rho}
  =\sum_{j=0}^{+\infty}\frac{a^j}{2j+1}
  >1+\frac a3+\frac{a^2}{5}+\frac{a^3}{7}+\frac{a^4}{9}
  >\frac{163}{100}.
\]
Using the formulas for $U$ and $d$ in \eqref{eq:U-d} and the lower bound for $S$ just obtained, we estimate $aU-d$ as
\begin{align*}
aU-d
  =\frac{\rho}{\pi}
    \left(\frac7{30}+\frac9{100}S\right)
  >\frac{63}{209}
    \left(\frac7{30}+\frac9{100}\frac{163}{100}\right)
   >\frac{567}{5000}
   =a(c+\varepsilon).
\end{align*}
Since $d=2\rho/(3\pi)>0$, we have proved $0<d<a(U-c-\varepsilon)$.

Using the formulas \eqref{eq:tail-first-two} from Lemma~\ref{lem:ferrers-tail}(i) for the first two terms $Z_0$ and $Z_1$ of the sequence in \eqref{eq:discrete-tail}, we write
\[
  \frac{5Z_1-Z_0}{4}
  =\left(\frac7{160},b\right)
  \qquad\mbox{and}\qquad
  b:=\frac78U-\frac54d.
\]
In particular, $b>0$ because $U>\rho/\pi>(10/7)d$.
Since $\rho<19/20$, we have $\operatorname{arctanh}\rho<\frac12\log 39<2$.
From the definition \eqref{eq:U-d}, the bound $\operatorname{arctanh}\rho<2$, and the inequalities $\rho<1$ and $\pi>3$, we obtain $U<2/5$.
Since $\pi<\sqrt{10}$, we also obtain $d>1/5$ from the formula $d=2/(\pi\sqrt{10})$ in \eqref{eq:U-d}.
Consequently, $b<1/10$ and
\[
  \frac{\norm{5Z_1-Z_0}^2}{16}
  <\frac{49}{25600}+\frac1{100}
  <\frac1{64}=c^2.
\]
\end{proof}

\subsection{Proof of Lemma~\ref{lem:quantitative-bessel}}
\label{app:continuous-matching-time}

To prove Lemma~\ref{lem:quantitative-bessel}, we first bound $J_1$, $J_2$, $Y_1$, and $Y_2$ on $(0,2T]$.
Using these bounds, we verify the signs in \eqref{eq:small-time-signs} and the positivity of the constant $N_T$ defined in \eqref{eq:NT-definition}.
We then combine the decrease of $\eta$ on $[T,2T]$ with Nicholson's formula to prove $\eta(t)\le\eta(T)$ for every $t\ge T$.

\begin{proof}
Let $T:=1/10$ and put $x:=t^2/4$.
For $m=1,2$, set
\[
  S_m(x):=\sum_{j=0}^{+\infty}
  \frac{(-1)^j(H_j+H_{m+j}-2\gamma_{\rm E})}
       {j!(m+j)!}x^j.
\]
Here $\gamma_{\rm E}$ and $H_m$ are Euler's constant and the harmonic numbers introduced in Lemma~\ref{lem:bessel-ascending}, i.e., $H_0=0$ and $H_m=\sum_{\ell=1}^m\ell^{-1}$ for $m\ge1$.
The series for $Y_m$ in Lemma~\ref{lem:bessel-ascending}, given in \eqref{eq:Yn-ascending}, can then be written as
\begin{equation}
  Y_m(t)
  =-\frac1\pi\left(\frac t2\right)^{-m}
    \sum_{j=0}^{m-1}\frac{(m-j-1)!}{j!}x^j
   +\frac2\pi\log\left(\frac t2\right)J_m(t)
   -\frac1\pi\left(\frac t2\right)^mS_m(x).
  \label{eq:Yn-complete-small}
\end{equation}

For $0<t\le2T$, we have $x\le10^{-2}$, so the ratio of successive absolute terms in the series for $J_m$ in \eqref{eq:J-definition} is at most $1/200$ for $m=1,2$.
For each of $J_1$ and $J_2$, the first partial sum is an upper bound and the second partial sum is a positive lower bound, so
\begin{equation}
  0<J_1(t)<\frac t2
  \qquad\mbox{and}\qquad
  0<J_2(t)<\frac{t^2}{8}.
  \label{eq:J12-explicit-bounds}
\end{equation}
For the remaining series $S_m$, the bound $0.577<\gamma_{\rm E}<0.578$ from \cite[Eq.~(5.2.3)]{olver2010nist} and the inequality $H_m\le m$ control each coefficient after the constant term.
In particular,
\[
  \left|S_m(x)-\frac{H_m-2\gamma_{\rm E}}{m!}\right|
  \le\sum_{j=1}^{+\infty}(2j+m+2)x^j
  =\frac{2x}{(1-x)^2}+\frac{(m+2)x}{1-x}.
\]
For $m=1$, the constant term lies in $(-0.156,-0.154)$ and the absolute value of the remainder is less than $0.051$.
For $m=2$, the constant term lies in $(0.172,0.173)$ and the corresponding remainder has absolute value less than $0.061$.
Hence
\begin{equation}
  -1<S_1(x)<0
  \qquad\mbox{and}\qquad
  0<S_2(x)<1.
  \label{eq:S12-explicit-bounds}
\end{equation}
Since $\log(t/2)<0$ on $0<t\le2T=1/5$ and $J_m(t)>0$ for $m=1,2$ by \eqref{eq:J12-explicit-bounds}, the logarithmic term in \eqref{eq:Yn-complete-small} is negative for both $m=1$ and $m=2$.
Using the bounds on $S_1$ and $S_2$ in \eqref{eq:S12-explicit-bounds}, we then obtain from \eqref{eq:Yn-complete-small} that
\[
  Y_1(t)<-\frac2{\pi t}+\frac{t}{2\pi}<0
  \qquad\mbox{and}\qquad Y_2(t)<0,
\]
and, since $\eta'=Y_2/t$ by \eqref{eq:bessel-scaled-derivative} in Lemma~\ref{lem:bessel-differentiation},
\[
  0<t^3\bigl(-\eta'(t)\bigr)
  <\frac4\pi+\frac{t^2}\pi
   +\frac{t^4}{4\pi}\left(\log\frac2t+1\right)<2.
\]
Both $t^2$ and $t^4(\log(2/t)+1)$ are increasing on $(0,1/5]$, so we obtain an upper bound strictly below $2$ by evaluating the displayed expression at $t=1/5$ and using $\log 10<3$ and $\pi>3.141$.
Recall the scaled Bessel functions $u=J_1/t$ and $\eta=-Y_1/t$ defined in \eqref{eq:bessel-branches}.
By Lemma~\ref{lem:bessel-differentiation} with $m=1$, we also have $u'=-J_2/t$.
From the bounds on $J_1$, $J_2$, $Y_1$, and $Y_2$, we obtain \eqref{eq:small-time-signs}, in fact for $0<t\le2T$.

At $t = T$, we obtain $0.049 < J_1(T) < 0.05$ and $0 < J_2(T) < 0.00125$ from the alternating series for $J_1$ and $J_2$ in Lemma~\ref{lem:bessel-ascending}.
Substituting these bounds and \eqref{eq:S12-explicit-bounds} into \eqref{eq:Yn-complete-small}, with $3.141<\pi<3.142$ and $2.9<\log 20<3$, we obtain the endpoint estimates
\[
  64.39<\eta(T)<64.63
  \qquad\mbox{and}\qquad
  1276.25<-\eta'(T)<1276.70.
\]
Thus \eqref{eq:matching-time-bessel-bounds} holds.
From the bounds on $\eta(T)$ and $-\eta'(T)$ just proved and the definition of $R_T$ in \eqref{eq:IT-RT}, we have $R_T > 4T^3(1276) - T^4(65) > 5.097$.
The function $g_{\rm b}$ in \eqref{eq:continuous-baseline} equals $2t$ on $[0,T/2]$ and is a convex combination of $2t$ and $-\eta'$ on $[T/2,T]$.
Since $0<-\eta'(t)<2/t^3$ on $[T/2,T]$,
\[
  0\le\int_0^T s^4g_{\rm b}(s)\dd s
  \le2\int_0^T s^5\dd s+2\int_{T/2}^T s\dd s<0.008,
\]
so $N_T>5.097-0.008>0$ by its definition in \eqref{eq:NT-definition}.

Finally, $\eta'<0$ on $[T,2T]$, so $\eta(t)\le\eta(T)$ on this interval.
Recall the function $A:(0,+\infty)\to\R$, i.e., $A(t)=J_1(t)^2+Y_1(t)^2$.
Since $\mathrm{e}^{-v\cosh s} > 0$ is decreasing in $v$, we see from \eqref{eq:K0-integral} that $K_0$ is positive and nonincreasing on $(0, +\infty)$.
Because $\cosh(2r) > 0$, we conclude from Nicholson's formula \eqref{eq:bessel-nicholson} in Lemma~\ref{lem:bessel-nicholson} with $m = 1$ that $A$ is positive and nonincreasing on $(0, +\infty)$.
Consequently, for $t\ge2T$,
\[
  \eta(t)
  \le\frac{\sqrt{A(t)}}t
  \le\frac{\sqrt{A(T)}}{2T}
  =\frac12\sqrt{u(T)^2+\eta(T)^2}
  <\eta(T),
\]
since $u(T)<1/2$ and $\eta(T)>64$.
Combining the decrease of $\eta$ on $[T,2T]$ with the bound $\eta(t)<\eta(T)$ for $t\ge2T$, we obtain \eqref{eq:tail-height-bound}.
\end{proof}

\end{document}